\documentclass[english]{amsart}
\usepackage[all]{xy}
\usepackage{varioref,babel}
\usepackage{enumerate,a4,amsfonts,amsmath,amsfonts,amssymb,amscd}
\usepackage{amsxtra,eucal}
\usepackage[latin1]{inputenc}
\usepackage{tabularx}

\usepackage{mathrsfs}
\usepackage{hyperref}
\usepackage{mathtools}
\usepackage{color}

\usepackage{dsfont} 

\usepackage[foot]{amsaddr}


\newcommand{\remove}[1]{} 



\newcommand{\N}{\mathbb{N}}
\newcommand{\Q}{\mathbb{Q}}
\newcommand{\R}{\mathbb{R}}
\newcommand{\Z}{\mathbb{Z}}



\newcommand{\calB}{{\mathcal{B}}}

\newcommand{\calO}{{\mathcal{O}}}

\newcommand{\calR}{{\mathcal{R}}}

\newcommand{\calT}{{\mathcal{T}}}

\newcommand{\catC}{{\mathscr{C}}}

\newcommand{\catG}{{\mathscr{G}}}

\newcommand{\catP}{{\mathscr{P}}}

\newcommand{\catU}{{\mathscr{U}}}

\newcommand{\fraka}{{\mathfrak{a}}}

\newcommand{\frakm}{{\mathfrak{m}}}

\newcommand{\frakp}{{\mathfrak{p}}}

\newcommand{\bfG}{{\mathbf{G}}}
\newcommand{\bfH}{{\mathbf{H}}}


\newcommand{\smallSMatII}[4]{\left[\begin{smallmatrix} {#1} & {#2} \\ {#3} &
{#4} \end{smallmatrix}\right]}

\newcommand{\invlim}{\underleftarrow{\lim}\,}

\newcommand{\suchthat}{\,:\,}

\newcommand{\quo}[1]{\overline{#1}}

\newcommand{\veps}{\varepsilon}

\newcommand{\Trings}[1]{\left< #1 \right>}


 %

 %
\DeclareMathOperator{\Cent}{Cent}

\DeclareMathOperator{\coker}{coker}
\DeclareMathOperator{\corad}{corad} %
\DeclareMathOperator{\End}{End} %
\DeclareMathOperator{\Fix}{Fix} %
\DeclareMathOperator{\Gal}{Gal} %

\DeclareMathOperator{\Hom}{Hom} %
\DeclareMathOperator{\id}{id}
\DeclareMathOperator{\im}{im} %

\DeclareMathOperator{\Jac}{Jac} %
\DeclareMathOperator{\Mor}{Mor} %
\newcommand{\op}{\mathrm{op}}

\DeclareMathOperator{\Spec}{Spec}
\DeclareMathOperator{\Sym}{Sym}


\newtheorem{thm}{Theorem}[section]
\newtheorem*{thm*}{Theorem}
\newtheorem{lem}[thm]{Lemma}
\newtheorem{prp}[thm]{Proposition}
\newtheorem{cor}[thm]{Corollary}

\newtheorem{cnj}[thm]{Conjecture}

\newtheorem{que}[thm]{Question}

\newtheorem{baseexample}[thm]{Example} 
\newtheorem{baseremark}[thm]{Remark} 

\newenvironment{example}
{\begin{baseexample}\rm}{\end{baseexample}}
\newenvironment{remark}
{\begin{baseremark}\rm}{\end{baseremark}}



\newcommand{\units}[1]{{#1^\times}}   

\newcommand{\Herm}[2][]{\mathscr{U\!\!H}^{#1}(#2)}     
\newcommand{\Hyp}[2][]{\mathds{h}^{#1}_{#2}}      
\newcommand{\herm}[2][]{\mathscr{H}^{#1}({#2})}



\newcommand{\rMod}[1]{{\mathrm{Mod}\textrm{-}{#1}}} 

\newcommand{\rproj}[1]{\catP(#1)}                   

\newcommand{\lAd}[1]{{#1}_\ell}   



\newcommand{\nMat}[2]{\mathrm{M}_{#2}(#1)}

\newcommand{\uU}{{\mathbf{U}}}
\newcommand{\uGL}{{\mathbf{GL}}}
\newcommand{\fppf}{\mathrm{fppf}}
\newcommand{\et}{\mathrm{\acute{e}t}}
\newcommand{\sh}{{\mathrm{sh}}}

\newcommand{\rmH}{\mathrm{H}}


\numberwithin{equation}{section}


\title[Rationally Isomorphic Hermitian Forms]{Rationally Isomorphic Hermitian Forms and Torsors of Some Non-Reductive Groups}


\author{Eva Bayer-Fluckiger$^1$}
\author{Uriya A.\ First$^2$}
\address{$^1$\'{E}cole Polytechnique F\'{e}d\'{e}rale de Lausanne, Switzerland.}
\address{$^2$University of British Columbia, Canada.}

\date{\today}

\thanks{
The first named author is partially supported  by  an SNFS grant \#200021-163188.
The second named author has performed the research at EPFL, the Hebrew University of Jerusalem and the University
of British Columbia (in this order), where he was supported by an SNFS grant \#IZK0Z2\_151061,
an ERC grant \#226135, and the UBC Mathematics Department, respectively.
}



\begin{document}

\maketitle

\begin{abstract}
    Let $R$ be a semilocal Dedekind domain.
    Under certain assumptions, we show that two (not necessarily unimodular) hermitian forms over an
    $R$-algebra with involution, which  are \emph{rationally ismorphic} and have isomorphic semisimple \emph{coradicals},
    are in fact isomorphic.
    The same result is also obtained for quadratic forms equipped with an  action of a finite group.
    The results have cohomological restatements that resemble the Grothendieck--Serre conjecture, except
    the group schemes involved are not reductive.
    We show that these group schemes are closely related to group schemes arising in Bruhat--Tits theory.
\end{abstract}

\setcounter{section}{-1}

\section{Introduction}
\label{section:intro}

    Let $R$ be a  discrete valuation ring with $2\in\units{R}$, and let $F$ be its fraction field.
    The following theorem is well-known (see for instance  \cite[Th.~1]{Fi14B} for a short proof):

    \begin{thm}\label{TH:classical-result}
        Let $f,f'$ be two unimodular quadratic forms over $R$. If $f$ and $f'$ become isomorphic
        over $F$, then they are isomorphic over $R$.
    \end{thm}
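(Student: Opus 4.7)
The plan is to induct on the rank by splitting off a common diagonal entry at each step, reducing the theorem to a single key \emph{representation lemma}: a unimodular quadratic form $f$ over $R$ represents a unit $c \in R^\times$ over $R$ if and only if it does so over $F$.

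The first step is diagonalization. Since $2 \in R^\times$, whenever $v$ lies in the underlying module $M$ and $f(v) \in R^\times$, the usual splitting argument gives an orthogonal decomposition $M = Rv \perp v^\perp$ with $v^\perp$ still unimodular. Iterating, $f \cong \langle a_1,\ldots,a_n\rangle$ and $f' \cong \langle b_1,\ldots,b_n\rangle$ for units $a_i, b_j \in R^\times$.

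To prove the representation lemma, suppose $f(v) = c \in R^\times$ for some $v \in M \otimes_R F$, and let $\pi$ be a uniformizer of $R$. Choose $k \geq 0$ minimal with $w := \pi^k v \in M$; if $k = 0$ we are done, so assume $k \geq 1$. Then $w \notin \pi M$ and $f(w) = \pi^{2k}c \in \pi R$, so the reduction $\bar w$ is a nonzero isotropic vector in the nondegenerate space $(M/\pi M,\bar f)$. A standard manipulation within this space yields $\bar u$ with $\bar b(\bar w,\bar u)\neq 0$ and $\bar f(\bar u)=0$, where $b$ denotes the polar form of $f$. Lifting to $u \in M$, the rank-two sublattice $H := Rw + Ru$ has Gram determinant congruent to $-b(w,u)^2 \pmod\pi$, which is a unit, so $H$ is unimodular. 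It therefore suffices to solve $f(\alpha w + \beta u) = c$ for $\alpha,\beta \in R$. Modulo $\pi$ this reduces to $2\alpha\beta\, b(w,u) \equiv c$, which admits solutions with $\alpha\beta$ a unit; at any such solution the Jacobian of $f(\alpha w + \beta u) - c$ has a coordinate that is a unit, and Hensel's lemma produces an integral lift.

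With the lemma in hand, I would apply it with $c = a_1$: the rational isomorphism $f_F \cong f'_F$ shows that $f'$ represents $a_1$ over $F$, hence over $R$ by the lemma, so $f' \cong \langle a_1\rangle \perp g'$ for some unimodular form $g'$ of rank $n-1$. Witt cancellation over the field $F$ then gives $\langle a_2,\ldots,a_n\rangle_F \cong g'_F$, and induction on the rank yields $\langle a_2,\ldots,a_n\rangle \cong g'$ over $R$, whence $f \cong f'$. The main obstacle is the representation lemma, since a priori the rational representative $v$ may have arbitrarily deep denominators; the rank-two reduction together with Hensel is what keeps the argument honest.
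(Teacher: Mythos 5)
Your strategy---diagonalize, prove a representation lemma, split off a common diagonal entry, Witt-cancel over $F$, induct---is a correct and classical route, and the reduction to the unimodular rank-two sublattice $H = Rw + Ru$ with $\bar w,\bar u$ a hyperbolic pair mod $\pi$ is exactly the right move. But the last step has a genuine gap: you lift a mod-$\pi$ solution of $f(\alpha w + \beta u)=c$ to $R$ via Hensel's lemma, yet the theorem is stated for an \emph{arbitrary} discrete valuation ring, and Hensel's lemma requires $R$ to be Henselian (e.g.\ complete). In $R = \Z_{(5)}$, say, $x^2=6$ has a simple root mod $5$ but no root in $R$, so a smooth mod-$\pi$ point of a conic over $R$ need not lift to an $R$-point.

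The fix is cheap and uses data you already have in hand. The conic $q(\alpha,\beta):=f(\alpha w+\beta u)=c$ has an $F$-rational point, namely $(\pi^{-k},0)$ (this is just $v$). Write $a=f(w)=\pi^{2k}c$, $e=b(w,u)\in\units{R}$, $d=f(u)\in\pi R$, and intersect the conic with the line through $(\pi^{-k},0)$ in the direction $(1,\pi^k)$. The second intersection point is
\[
\alpha=\frac{d-c}{D},\qquad \beta=\frac{-2(\pi^{k}c+e)}{D},\qquad D:=\pi^{k}c+2e+d\pi^{k}\equiv 2e\pmod{\pi},
\]
so $D\in\units{R}$ and $(\alpha,\beta)\in R^{2}$; a short expansion confirms $q(\alpha,\beta)=cD^{2}/D^{2}=c$, so $H$ represents $c$ over $R$ and your induction proceeds. (Alternatively one could first reduce to the case of a complete DVR, but that requires a nontrivial patching lemma of the kind the paper uses to prove Theorem~\ref{TH:forms-over-hereditary-has-genus-one}.) Since the paper does not prove Theorem~\ref{TH:classical-result} directly but obtains it as the very special case $A=R$ of Theorem~\ref{TH:forms-over-hereditary-has-genus-one}, your elementary argument, once repaired, is genuinely simpler than the paper's machinery.
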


    Over the years, this result has been generalized in many ways; see for instance \cite{Coll79}
    and \cite{Panin05} for surveys. Many of the generalizations are consequences of  the following conjecture:

    \begin{cnj}[Grothendieck \cite{Groth58}, Serre \cite{Serre58}]
        Let $R$ be a regular local integral  domain with fraction field $F$.
        Then for every  reductive  group scheme $\bfG$ over $R$, the induced map
        \[
        \rmH^1_{\et}(R,\bfG)\to \rmH^1_{\et}(F,\bfG)
        \]
        is  injective.
    \end{cnj}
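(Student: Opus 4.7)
The plan is to follow the geometric strategy developed by Colliot-Th\'el\`ene, Ojanguren, Panin, and others, which has resolved the conjecture in many cases (most notably the equicharacteristic case, by Fedorov--Panin). Let $\bfG$ be a reductive $R$-group scheme and $E$ a $\bfG$-torsor whose class becomes trivial in $\rmH^1_{\et}(F,\bfG)$; the goal is to show that $E$ itself is trivial.

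First, I would reduce to a geometric situation. By Popescu's theorem, every regular local ring is a filtered colimit of smooth algebras over its prime subring, so a standard limit argument lets me assume $R = \calO_{X,x}$ at a closed point of a smooth affine scheme $X$ of relative dimension $d$ over a Dedekind base. The group scheme $\bfG$ and the torsor $E$ spread out to some open $U \subseteq X$, and the failure locus $Z \subseteq U$ of triviality of $E$ is a proper closed subset not containing $x$.

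Next, I would invoke the Gabber--Quillen geometric presentation lemma to obtain, after shrinking $U$, a smooth morphism $\pi \colon U \to Y$ onto a scheme of dimension $d-1$ such that $\pi|_Z$ is finite. This presents the problem as one about a $\bfG$-torsor on a smooth relative curve which is trivialised at a section, and one can then run a patching/excision argument in the spirit of Colliot-Th\'el\`ene--Sansuc and Panin: the Nisnevich square associated to the pair $(U,Z)$, combined with a Harder-type triviality theorem for $\bfG$-torsors on affine curves over the local base $\calO_{Y,\pi(x)}$, produces a trivialisation of $E$ over a Nisnevich neighbourhood of $x$, hence over $R$ itself.

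The hard part is the third step. The patching argument requires substantial input from the structure theory of reductive groups (Bruhat decomposition, existence of Levi and parabolic subgroups, isotropy results of Chernousov--Gille--Pianzola type) in order to move $\bfG$-torsors along the relative curve, and these tools behave quite differently in mixed characteristic than in the equicharacteristic case. As a result, the conjecture as stated remains open beyond the cases handled by Fedorov--Panin and the subsequent literature. In the context of the present paper, where the natural automorphism schemes of non-unimodular hermitian forms are genuinely non-reductive, this geometric machinery is not directly available, and one must replace it by a direct arithmetic argument tailored to those group schemes---which is precisely the task the paper sets out to accomplish.
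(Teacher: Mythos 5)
This statement is a \emph{conjecture}, not a theorem of the paper: the paper cites it as the Grothendieck--Serre conjecture and offers no proof, only a discussion of known special cases (Nisnevich for $\dim R = 1$, Ojanguren--Panin and others for orthogonal groups, Fedorov--Panin when $R$ contains a field). Your write-up correctly recognises this. What you give is a reasonably accurate survey of the geometric strategy (Popescu reduction, geometric presentation lemma, Nisnevich patching with Harder-type triviality on relative curves) that underlies the equicharacteristic proof, together with an honest acknowledgement that the mixed-characteristic case is open. Since there is no ``paper's own proof'' to compare against, the only substantive remark is that your final paragraph is exactly aligned with the paper's motivation: the paper's group schemes $\uU(f)$ for non-unimodular $f$ are not reductive, so even the known cases of the conjecture do not apply, and the authors instead prove the needed injectivity (Theorem~\ref{TH:main-coh}) by a direct argument via hereditary orders and transfer rather than by the geometric machinery you sketch. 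Your answer is correct in substance; just be aware that one should not label a conjecture's statement as something to be ``proved'' in the context of this paper.
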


    The conjecture can also be made for non-connected group schemes whose neutral
    component is reductive (although it is not true in this generality \cite[p.~18]{Coll79});
    a widely studied case is the orthogonal group and its forms.

    To see the connection to Theorem~\ref{TH:classical-result}, fix a unimodular quadratic
    space $(P,f)$ and let $\uU(f)$ denote the group scheme of isometeries of $f$ (the isometries
    of $f$ are the $R$-points of $\uU(f)$, denoted $U(f)$).
    Then  isomorphism classes of unimodular quadratic forms on the $R$-module $P$ correspond to $\rmH^1_{\text{\'{e}t}}(R,\uU(f))$
    (see for instance \cite[Ch.~III]{Kn91}).
    Thus, verifying the conjecture for $\uU(f)$ implies Theorem~\ref{TH:classical-result}.
    In this special case, the conjecture was proved when $\dim R\leq 2$ (\cite[Cor.~2]{Ojan82})
    or $R$ contains a field (\cite[Th.~9.2]{OjanPanin01}).

    The general Grothendieck--Serre conjecture  was recently proved by Fedorov and Panin in case $R$ contains
    a field $k$; see   \cite{FedPanin13} for the case where $k$ is infinite
    and \cite{Panin14}
    for the case where $k$ is finite.
    Many special cases were known before; see \cite{FedPanin13} and the references therein.
    In particular, Nisnevich \cite{Nisne84} proved the conjecture when $\dim R=1$.

    \medskip

    Recently, Theorem~\ref{TH:classical-result} was extended in a different direction by Auel, Parimala and
    Suresh \cite{AueParSur14}. Let $R$ denote a semilocal Dedekind domain with $2\in\units{R}$ henceforth.
    A quadratic form $f$ over $R$ has \emph{simple degeneration of multiplicity $1$} if its determinant is square free in $R$.
    They show:

     \begin{thm}[{\cite[Cor.~3.8]{AueParSur14}}]\label{TH:APS}
        Let $f,f'$ be
        two quadratic forms over $R$ having simple degeneration of multiplicity one.  If $f$ and $f'$ are isomorphic
        over $F$, then they are isomorphic over $R$.
    \end{thm}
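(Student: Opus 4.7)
The plan is to reduce to the classical unimodular setting (Theorem~\ref{TH:classical-result}) by orthogonally splitting off the one-dimensional degenerate direction of each form and invoking Witt cancellation over $F$.

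Since $R$ is a semilocal Dedekind domain, it is a PID; let $\frakm_1,\dots,\frakm_r$ be its maximal ideals with uniformizers $\pi_1,\dots,\pi_r$. Write $\det f = u\cdot t$ modulo squares in $\units{R}$, with $u\in\units{R}$ and $t\in R$ square-free (a product of distinct $\pi_i$); similarly $\det f' = u'\cdot t'$. The first step is to establish a global orthogonal decomposition
\[
f\cong g\perp\langle ct\rangle,\qquad f'\cong g'\perp\langle c't'\rangle,
\]
with $g,g'$ unimodular of rank $n-1$ and $c,c'\in\units{R}$. At a maximal ideal $\frakm\mid t$, the Gram matrix of $f$ has corank one modulo $\frakm$; lifting a generator of this radical to a primitive vector and performing a Schur-complement change of basis splits off a rank-one summand whose value is a unit multiple of the corresponding uniformizer. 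To obtain a single global decomposition, I would use the Chinese remainder theorem to choose one primitive vector $v\in R^n$ that reduces modulo each $\frakm_i\mid t$ to a generator of the corresponding radical, and then verify that the resulting Schur complement is integrally defined and unimodular at every maximal ideal of $R$.

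Next, I match the degenerate parts. Comparing determinants in $\units{F}/\units{F}^2$, the $F$-isomorphism $f\otimes F\cong f'\otimes F$ yields $ct/c't'\in\units{F}^2$. Because $c,c'$ are units and $t,t'$ are square-free, the parity condition $v_\frakm(ct/c't')\in 2\Z$ at every maximal ideal forces $v_\frakm(t)=v_\frakm(t')$, whence $t/t'\in\units{R}$ and $ct/c't'\in\units{R}\cap\units{F}^2=(\units{R})^2$. Therefore $\langle ct\rangle\cong\langle c't'\rangle$ over $R$, and after composing with this isometry we have $g\perp\langle ct\rangle\cong g'\perp\langle ct\rangle$ over $F$. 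Witt cancellation over $F$ (valid since $2\in\units{F}$ and the rank-one part is regular over $F$) gives $g\otimes F\cong g'\otimes F$, and the unimodular forms $g,g'$ are therefore isomorphic over $R$ by Theorem~\ref{TH:classical-result}, applied at each localization together with a standard patching argument for unimodular forms. Taking orthogonal sums with the identified degenerate pieces gives $f\cong f'$ over $R$.

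The main obstacle I expect is the first step: producing the decomposition $f=g\perp\langle ct\rangle$ globally over $R$, rather than only over each localization $R_{\frakm_i}$. The existence of such a splitting at a single prime is essentially formal, but the vector carrying the degenerate direction must be chosen so that, simultaneously at every maximal ideal of $R$, the Schur complement is integrally defined and its Gram form is unimodular. A secondary point is the passage of Theorem~\ref{TH:classical-result} from a DVR to a semilocal Dedekind domain; this is routine by patching isometries of unimodular forms, but if unavailable as a black box one would instead carry the decomposition prime-by-prime throughout and assemble the global isomorphism in the same fashion.
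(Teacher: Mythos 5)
Your Step~1 (a global orthogonal decomposition $f\cong g\perp\langle ct\rangle$ with $g$ unimodular and $c\in\units{R}$) is actually fine: since $\det f$ has valuation at most $1$ at each maximal ideal, the reduction of $f$ at each maximal ideal has corank at most $1$, so one can repeatedly split off unit-norm vectors (chosen by CRT to have unit norm simultaneously at all maximal ideals of the semilocal ring) until only a rank-one non-unimodular summand remains. So the obstacle you anticipated is not where the proof breaks.

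The genuine gap is in Step~2. The determinant comparison does \emph{not} give $ct/c't'\in\units{F}^2$: what the $F$-isometry gives is
\[
\frac{(\det g)\, ct}{(\det g')\, c't'}\in\units{F}^2,
\]
so $ct/c't'\equiv \det g'/\det g \pmod{\units{F}^2}$, and $\det g'/\det g$ is a unit that need not be a square. Thus one only obtains $ct/c't'\in\units{R}\cdot\units{F}^2$; the valuation argument still yields $t/t'\in\units{R}$, but then $ct/c't'$ is merely a unit, not a unit square, and $\langle ct\rangle\cong\langle c't'\rangle$ over $R$ does not follow. In fact it can genuinely fail: over $R=\Z_{(7)}$, the forms $\langle 1,1,7\rangle$ and $\langle 1,2,14\rangle$ both have simple degeneration of multiplicity one, they are isomorphic over $\Q$ (same discriminant $7$ mod squares, same Hasse invariants at every place, same signature) and over $\Z_7$ (since $2$ is a square in $\Z_7$), hence isomorphic over $\Z_{(7)}$ by the very theorem you are proving --- yet $\langle 7\rangle\not\cong\langle 14\rangle$ over $\Z_{(7)}$ because $2\notin\units{\Z_{(7)}}^2$. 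So the rank-one degenerate summand is \emph{not} an invariant of the isometry class of $f$, and the attempt to match these summands via discriminants and then cancel over $F$ cannot work as stated. Repairing this requires a finer comparison of local invariants (as in Auel--Parimala--Suresh) or an approach that does not try to split off the degenerate direction; the present paper sidesteps the issue entirely by encoding $f$ as the morphism $(P,f_\ell,P^*)$ in $\Mor(\rproj{A})$ and reducing to a unimodular form over a hereditary order, where determinant-matching of this kind is not needed.
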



	Note that the forms $f,f'$ in the theorem may be non-unimodular. When this is the case, they  can still be viewed
    as elements of $\rmH^1_{\text{\'{e}t}}(R,\uU(f))$, but $\uU(f)$   no longer has a reductive
    neutral component,
    so the theorem does not follow from the Grothendieck--Serre conjecture.

\medskip

    This is the starting point of our paper.  Our aim is to put Theorem~\ref{TH:APS} in a
    different perspective, and to study how far one can generalize it. Our point of view
    is inspired by the treatment of non-unimodular forms in \cite{BayerFain96}, \cite{BayerMold12} and \cite{BayFiMol13}.
    Roughly speaking, these
    works reduce the treatment of (systems of) non-unimodular forms to (single) unimodular
    forms over a different base ring.

 \medskip
 Let us start by defining the notion of a nearly unimodular hermitian form, a notion that extends the one
 considered by Auel, Parimala and Suresh. For any ring with involution $(A,\sigma)$, we say
 that a hermitian form $f:P\times P\to A$ is {\it nearly unimodular} if the cokernel of the homomorphism $P \to P^*$ induced by $f$
 is a semisimple $A$-module. We denote this cokernel by $\corad(f)$ and call it the  \emph{coradical} of $f$.

 Note that   a quadratic form over $R$ having simple degeneration
 of multiplicity $1$ is nearly unimodular.
     The main result of the paper is the following generalization of Theorem~\ref{TH:APS}:




    \begin{thm*}[cf.\ Th.~\ref{TH:non-unimodular-forms}]
        Let $A$ be a \emph{hereditary} $R$-order, and let
        $\sigma:A\to A$ be an $R$-involution.
        \begin{enumerate}[(i)]
        \item Let $f,f':P\times P\to A$ be  two nearly unimodular hermitian forms over $(A,\sigma)$
        whose
        coradicals are isomorphic
    	as $A$-modules.
 		Then $f_F\cong f'_F$ implies $f\cong f'$.
        \item Any  unimodular hermitian form over $(A_F,\sigma_F)$
        is obtained by base change from a nearly unimodular hermitian form  over $(A,\sigma)$.
        \end{enumerate}
    \end{thm*}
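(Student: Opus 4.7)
I would reduce both parts of the theorem to the classification of \emph{unimodular} hermitian forms over a (possibly different) hereditary $R$-order with involution, invoking the Grothendieck--Serre conjecture over Dedekind semilocal $R$, which is known in the reductive case by Nisnevich.

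For part (ii), given a unimodular hermitian form $g$ on a finitely generated projective $A_F$-module $Q$, first pick a $\sigma$-stable $A$-lattice $L \subset Q$ with $g(L,L) \subseteq A$; such $L$ exists by scaling any full $A$-lattice (which is automatically projective, since $A$ is hereditary). Let $L^\# := \{x \in Q : g(x,L) \subseteq A\}$ be the $g$-dual lattice, so $L \subseteq L^\#$. I would refine $L$ within $L^\#$ by taking $P$ to be a maximal element of $\{M : L \subseteq M \subseteq L^\#,\ g(M,M) \subseteq A\}$, restricting to $\sigma$-stable $M$. A short argument using that $A/J$ is semisimple and that the Jacobson radical $J$ is an invertible $\sigma$-stable bimodule ideal (which holds because $A$ is hereditary) should then force $JP^\# \subseteq P$, giving the required semisimple coradical $P^\#/P$.

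For part (i), after transporting $(P,f)$ via the given $F$-isomorphism, I may assume $P,P'$ are two nearly unimodular $A$-lattices in the same unimodular hermitian $A_F$-space $(V,q)$, with $\corad(f) \cong \corad(f')$ as $A$-modules; the task reduces to finding $\gamma \in \uU(q)(F)$ with $\gamma(P)=P'$ and $\gamma^*f' = f$. The main tool is a \emph{transfer}: to each nearly unimodular $(P,f)$ I would associate a unimodular hermitian form $(M,h)$ over an auxiliary $R$-order with involution $(B,\tau)$, where $B$ is the stabilizer in $\End_{A_F}(V)$ of the chain $JP^\# \subset P \subset P^\#$ and $\tau$ is induced by $\sigma$ and $q$. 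Hereditariness of $A$ implies $B$ is also a hereditary $R$-order with generic fiber the semisimple $F$-algebra $\End_{A_F}(V)$, and the isomorphism class of the underlying $B$-module $M$ is controlled by the coradical of $f$. Under this transfer, $f \cong f'$ (resp.\ $f_F \cong f'_F$) corresponds to $h \cong h'$ (resp.\ $h_F \cong h'_F$), so the classification of unimodular hermitian forms over hereditary orders on Dedekind semilocal rings (a known case of Grothendieck--Serre by Nisnevich) applied to $h$ and $h'$ yields the integral isomorphism needed to conclude.

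The main obstacle will be the construction and analysis of the transfer functor $(A,\sigma,P,f) \mapsto (B,\tau,M,h)$: one must verify that $h$ is unimodular over $(B,\tau)$, that the assignment is bijective on isomorphism classes and compatible with both rational extension and coradical data, and that $\uU(h)$ falls within the known reductive case of Grothendieck--Serre. This is presumably the point where the smooth non-reductive group schemes $\uU(f)$ of the theorem connect with the parahoric group schemes of Bruhat--Tits theory mentioned in the abstract.
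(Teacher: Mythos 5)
Your overall strategy for part (i) --- transferring a nearly unimodular form to a unimodular form over an auxiliary hereditary $R$-order with involution, and controlling the underlying module by the coradical --- is close in spirit to what the paper does. The paper uses Bayer--Fainsilber's equivalence between arbitrary $1$-hermitian forms over $\rproj{A}$ and unimodular $1$-hermitian forms over the morphism category $\Mor(\rproj{A})$; the object associated to $(P,f)$ is $M=(P,f_\ell,P^*)$, and Lemma~\ref{LM:endo-ring-of-morph-with-semisimple-coker} shows $\End_{\Mor(\rproj{A})}(M)$ is a hereditary $R$-order (your chain stabilizer of $JP^\#\subset P\subset P^\#$ is essentially this ring). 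Lemma~\ref{LM:morphism-lemma} shows the isomorphism class of $M$ is controlled by the coradical. Your sketch of part~(ii) is likewise in the right spirit, although the paper does not insist on $\sigma$-stable lattices and instead runs an explicit induction on the length of $\tilde P/P$ using the fact that, for a hereditary order with $J=\Jac(A)$, $J^{n+1}L\subseteq A$ forces $J^nLJ^n\subseteq A$ for any two-sided $A$-lattice $L$ (Lemma~\ref{LM:Jac-radical-condition}).

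The genuine gap is the final step of your argument for (i), and it is exactly the point of the paper. You propose to invoke Nisnevich's case of the Grothendieck--Serre conjecture to classify unimodular hermitian forms over the auxiliary hereditary order $(B,\tau)$. But the associated group scheme $\uU(h)\to\Spec R$ need \emph{not} have reductive fibers when $B$ is hereditary but not separable: Example~\ref{EX:reductive} takes $R$ a DVR, $B=\smallSMatII{R}{\frakm}{R}{R}$ with a suitable involution, and computes that the closed fiber of $\uU(\cdot)^0$ is the additive group $\bfG_{\mathbf a,k}$, which is not reductive. So Nisnevich's theorem (stated for reductive, hence in particular connected reductive, group schemes) does not apply, and ``the classification of unimodular hermitian forms over hereditary orders on Dedekind semilocal rings'' is precisely \emph{not} a known case of Grothendieck--Serre. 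That classification is Theorem~\ref{TH:forms-over-hereditary-has-genus-one}, the technical core of the paper, and it is proved by a direct argument: patching from the semilocal case to complete DVRs, the structure theory of hereditary orders (reducing to $\calO_D^{[n_1,\dots,n_s]}$), and a valuation-theoretic computation showing $s\le 2$ and then that the rational congruence $x^\tau x=a$ forces $x\in E$. Your closing remark, that the non-reductive $\uU(f)$ connect with parahoric group schemes of Bruhat--Tits theory, correctly identifies where the difficulty lies; those are exactly the group schemes to which Grothendieck--Serre does not a priori apply, and closing this gap would require redoing the unimodular case (Section~3 of the paper), not merely citing it.
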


    Recall that an $R$-order is an $R$-algebra $A$ which is $R$-torsion-free and finitely generated  as an $R$-module.
    The $R$-order $A$ is hereditary if its one-sided ideals are projective. Notable examples of hereditary orders
    include \emph{maximal orders}.

\medskip

    Let $f$ be as in part (i) of the theorem and assume further that $f$ is {unimodular}.
    Then $\uU(f)$ is a smooth affine group scheme over $ R$,
    and part (i) of the theorem can be restated as:

    \begin{thm*}[cf.\ Th.~\ref{TH:main-coh}]
        The map $\rmH^1_\et(R,\uU(f))\to \rmH^1_\et(F,\uU(f))$ is injective.
    \end{thm*}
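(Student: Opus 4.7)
The plan is to translate the cohomological statement back into a statement about hermitian forms and then quote part (i) of the preceding theorem. Recall that $\uU(f)$ is smooth and affine over $R$, so the pointed set $\rmH^1_\et(R, \uU(f))$ classifies (up to isomorphism) the twisted forms of $(P,f)$: concretely, pairs $(P',f')$ where $P'$ is a finitely generated projective right $A$-module and $f':P'\times P'\to A$ is a unimodular hermitian form over $(A,\sigma)$ which becomes isomorphic to $(P,f)$ after some étale base change $R\to S$. The base-change map $\rmH^1_\et(R,\uU(f))\to \rmH^1_\et(F,\uU(f))$ corresponds under this bijection to the assignment $(P',f')\mapsto (P'_F, f'_F)$. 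So injectivity of this map is equivalent to the following: any two twisted forms $(P_1,f_1)$ and $(P_2,f_2)$ as above that satisfy $(P_1,f_1)_F\cong (P_2,f_2)_F$ must already be isomorphic over $(A,\sigma)$.

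I would then verify that Theorem~\ref{TH:non-unimodular-forms}(i) applies. Each $f_i$ is unimodular, being étale-locally isomorphic to a unimodular form (unimodularity of a hermitian form $f'$ amounts to the morphism $P'\to (P')^*$ being an isomorphism, a condition that can be checked étale-locally since $\uU(f)$ is smooth and $A$ is finitely generated as an $R$-module). In particular, both $f_1$ and $f_2$ are nearly unimodular, and their coradicals vanish, so they are trivially isomorphic as $A$-modules. By hypothesis, $(f_1)_F\cong (f_2)_F$. All hypotheses of part (i) of Theorem~\ref{TH:non-unimodular-forms} are thus satisfied, yielding $f_1\cong f_2$ over $(A,\sigma)$.

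The actual obstacle has therefore been absorbed entirely into the preceding theorem; the argument above is formal given that result, together with the standard torsor interpretation of $\rmH^1_\et$ for smooth affine group schemes. The one point that needs a little care is the identification of the twisted forms with unimodular hermitian forms on finitely generated projective $A$-modules (rather than forms on $P$ alone) and the compatibility of base change to $F$ with the map on cohomology; both are routine exercises in faithfully flat descent for $(A,\sigma)$-modules and the functoriality of $\uU$. Since $f$ is unimodular, the coradical hypothesis in part (i) of Theorem~\ref{TH:non-unimodular-forms} is automatic, so no further work is needed to match the two theorems up.
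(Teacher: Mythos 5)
Your overall route is the same as the paper's: use the torsor interpretation of $\rmH^1_\et(R,\uU(f))$ to reduce the cohomological injectivity to an isomorphism statement for unimodular hermitian forms, and then invoke the main hermitian-forms theorem (the paper uses Theorem~\ref{TH:forms-over-hereditary-has-genus-one}; you use Theorem~\ref{TH:non-unimodular-forms}(i), which subsumes it since unimodular forms have trivial coradical — that's a harmless choice). This is exactly the paper's strategy, carried out via Propositions~\ref{PR:torsors} and~\ref{PR:etale-forms-crit}.

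However, there is one point where you wave away something that is genuinely not routine. You correctly note that the elements of $\rmH^1_\et(R,\uU(f))$ correspond to unimodular hermitian spaces $(P',f')$ on finitely generated projective $A$-modules (not only on $P$ itself). To apply Theorem~\ref{TH:non-unimodular-forms}(i) you need both of your twisted forms $(P_1,f_1)$ and $(P_2,f_2)$ to live on the \emph{same} base module, but you never establish $P_1\cong P_2$ (nor $P_i\cong P$). You only say this is a ``routine exercise in faithfully flat descent.'' It is not: the statement needed is that if $P'\in\rproj{A}$ satisfies $P'_S\cong P_S$ for a faithfully flat $R$-algebra $S$, then $P'\cong P$ over $R$. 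This is Proposition~\ref{PR:etale-forms-crit}, whose proof rests on Proposition~\ref{PR:etale-extsions-of-modules}, and that proposition uses the semilocality of $R$ in an essential way (reduction modulo $\Jac(R)$, a field-theoretic lemma from \cite{BayFiMol13}, and projective covers). Over a non-semilocal base this kind of effective descent can fail. So the step you defer as routine is precisely the remaining nontrivial input in the paper's derivation of Theorem~\ref{TH:main-coh} from Theorem~\ref{TH:forms-over-hereditary-has-genus-one}, and should be spelled out (or at least cited) rather than dismissed.
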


    Note that
    while this resembles the  Grothendieck--Serre conjecture, the
    neutral component of $\uU(f)$ is not always reductive
    (Example~\ref{EX:reductive}).

    It turns out that the
    group schemes $\uU(f)$ can be given an alternative description using \emph{Bruhat-Tits theory}
    (Corollary~\ref{CR:parahoric}).
    This  description actually gives rise to a wider family of non-reductive group schemes
    over $R$, suggesting that the Grothendieck--Serre conjecture (in the case $\dim R=1$)
    might extend to these group schemes (Question~\ref{QU:GS}).

\medskip

    We note that Theorem~\ref{TH:non-unimodular-forms}(i) fails for arbitrary
    non-unimodular hermitian forms (Remark~\ref{RM:non-semisimple-corad}), or if $A$ is assumed to be a general $R$-order
    (Remark~\ref{RM:arbitrary-orders}).

\medskip

    As an application of Theorem~\ref{TH:non-unimodular-forms}(i), we prove a result about quadratic forms
    equipped with an action of a finite group $\Gamma$. 
    Recall that a $\Gamma$-form (over $R$) is a pair $(P,f)$, where $P$ is a
    finitely generated right $R\Gamma$-module, and $f : P \times P\to R$
    is a symmetric
    $R$-bilinear form such that $f(xg,yg) = f(x,y)$ for all $x,y \in P$ and  $g \in \Gamma$. We say that a $\Gamma$-form is {\it nearly unimodular}
    if it is nearly unimodular as a bilinear form over $R$. We prove:

    \begin{thm*}[cf.\ Th.~\ref{TH:Gamma-forms}]
        Let $(P,f)$ and $(P',f')$ be two nearly unimodular $\Gamma$-forms over $R$.
        Assume that $|\Gamma|\in\units{R}$, and that the coradicals of $f$ and $f'$ are isomorphic
        $R\Gamma$-modules.
        Then $(P_F,f_F)\cong (P'_F,f'_F)$ as $\Gamma$-forms implies
        $(P,f)\cong (P',f')$ as $\Gamma$-forms. Furthermore, any unimodular $\Gamma$-form
        over $F$ can be obtained by base change from a nearly unimodular $\Gamma$-form over $R$.
    \end{thm*}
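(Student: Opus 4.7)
The plan is to translate a $\Gamma$-form over $R$ into a hermitian form over the group algebra $R\Gamma$ equipped with its canonical involution, and then apply Theorem~\ref{TH:non-unimodular-forms}.

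First I would set $A=R\Gamma$ with the $R$-linear involution $\sigma$ determined by $\sigma(g)=g^{-1}$. Since $|\Gamma|\in\units{R}$, a Maschke-style argument shows that $A$ is a separable $R$-algebra; in particular $A$ is a maximal $R$-order in the separable $F$-algebra $F\Gamma$, and therefore hereditary. To a $\Gamma$-form $(P,f)$ I would attach the pair $(P,\widetilde f)$ defined by
\[
\widetilde f(x,y)=\sum_{g\in\Gamma}f(xg,y)\,g\in A.
\]
A routine computation using the symmetry of $f$ and the invariance $f(xg,yg)=f(x,y)$ shows that $\widetilde f$ is $\sigma$-sesquilinear and hermitian, and that the assignment $(P,f)\mapsto(P,\widetilde f)$ gives a bijection between isomorphism classes of $\Gamma$-forms over $R$ and of hermitian forms over $(A,\sigma)$; the inverse reads off the coefficient of $1\in\Gamma$ in $\widetilde f(x,y)$ to recover $f(x,y)$.

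Next I would identify coradicals. The $R$-linear map $t\colon A\to R$ sending $\sum_g a_g g$ to $a_1$ is a Frobenius trace on $A$, and it induces a natural isomorphism of $A$-modules
\[
\Hom_A(P,A)\xrightarrow{\sim}\Hom_R(P,R),\qquad\phi\mapsto t\circ\phi,
\]
where $\Hom_R(P,R)$ is regarded as an $A$-module via the $\Gamma$-action on $P$. Under this identification the adjoint map of $\widetilde f$ corresponds to the adjoint map of $f$, so $\corad(\widetilde f)\cong\corad(f)$ canonically as $A$-modules. Consequently $f$ is nearly unimodular if and only if $\widetilde f$ is; here one uses that, when $|\Gamma|\in\units{R}$, a finitely generated $A$-module is semisimple over $A$ precisely when it is semisimple over $R$, by applying Maschke over each residue field $R/\frakm$.

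With this dictionary in place, the first assertion is immediate from Theorem~\ref{TH:non-unimodular-forms}(i) applied to $\widetilde f$ and $\widetilde{f'}$, while the second assertion follows from Theorem~\ref{TH:non-unimodular-forms}(ii), since the construction $f\mapsto\widetilde f$ works verbatim over $F$ and sends unimodular forms to unimodular forms. The main obstacle will be verifying that the trace isomorphism $\Hom_A(P,A)\cong\Hom_R(P,R)$ is indeed $A$-linear and intertwines the two adjoint maps; this is where the hypothesis $|\Gamma|\in\units{R}$ enters essentially, and once it is in hand the theorem is a formal consequence of the main result.
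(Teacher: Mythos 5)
Your proposal is correct and follows essentially the same route as the paper: pass to the group ring $R\Gamma$ with the standard involution, use the Frobenius trace $\sum a_g g\mapsto a_1$ to identify $\Hom_{R\Gamma}(P,R\Gamma)$ with $\Hom_R(P,R)$ and hence coradicals, note that $R\Gamma$ is separable (so maximal, so hereditary) since $|\Gamma|\in\units{R}$, and invoke Theorem~\ref{TH:non-unimodular-forms}. The paper spells out one point you should also note explicitly, namely that an $R\Gamma$-module which is finitely generated projective over $R$ is automatically projective over $R\Gamma$ (by separability/Maschke), so the correspondence really does land in $\herm[u]{R\Gamma,\sigma}$.
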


    The cohomological results of this paper were written with the help
    of Mathieu Huruguen, and we thank him for his contribution.
    We also thank Jean-Pierre Serre and the anonymous referee for many beneficial comments and suggestions.

\medskip

    The paper is organized as follows: Sections~\ref{section:preliminaries} and~\ref{section:orders}
    recall hermitian forms and hereditary orders, respectively.
    Section~\ref{section:hermitian} is the technical heart of the paper, and it contains
    the proof of Theorem~\ref{TH:non-unimodular-forms}(i) in the unimodular case (Theorem~\ref{TH:forms-over-hereditary-has-genus-one});
    the proof uses patching results from \cite{BayFi14}.
    Following is Section~\ref{section:non-unimodular}, which proves Theorem~\ref{TH:non-unimodular-forms},
    deriving  part (i) from the unimodular case using results of \cite{BayerFain96} and \cite{BayFiMol13}.
    Theorem~\ref{TH:main-coh} is the subject matter of Section~\ref{section:cohomological}.
    In Section~\ref{section:BT}, we relate the group schemes appearing in Theorem~\ref{TH:main-coh}
    with group schemes arising in Bruhat--Tits theory  and, based on that,
    suggest an extension of the Grothendieck--Serre conjecture.
    Section~\ref{section:Gamma-forms} contains the aforementioned application to
    $\Gamma$-forms (Theorem~\ref{TH:Gamma-forms}).

\section{Hermitian Forms}
\label{section:preliminaries}

    We start by recalling hermitian forms over rings.
    We refer the reader to \cite{Kn91} and \cite{SchQuadraticAndHermitianForms} for details and proofs.

\subsection{Hermitian Forms}
\label{subsection:hermitian-forms}

    Let $(A,\sigma)$ be a ring with involution and let $u\in \Cent(A)$ be an element satisfying $u^\sigma u=1$.
    Denote by $\rproj{A}$ the category of finitely generated projective right $A$-modules.
    A \emph{$u$-hermitian space} over $(A,\sigma)$ is a pair $(P,f)$
    such that $P\in\rproj{A}$ and $f:P\times P\to A$
    is a biadditive map satisfying
    \[
    f(xa,yb)=a^\sigma f(x,y)b\qquad\text{and}\qquad f(x,y)=f(y,x)^\sigma u
    \]
    for all $x,y\in P$ and $a,b\in A$. In this case, $f$ is called a \emph{$u$-hermitian form}
    on $P$.

    An \emph{isometry} from $(P,f)$ to another $u$-hermitian space $(P',f')$
    is a map $\phi:P\to P'$ such that $\phi$ is an isomorphism of $A$-modules
    and $f'(\phi x,\phi y)=f(x,y)$ for all $x,y\in P$. The group
    of isometries of $(P,f)$ is denoted $U(f)$.

    The \emph{orthogonal sum} of two hermitian spaces is defined in the obvious way
    and is denoted using the symbol ``$\oplus$''.

\medskip

    For every $P\in\rproj{A}$, define $P^*=\Hom_A(P,A)$.
    We view $P^*$ as a \emph{right} $A$-module by
    setting $(\phi a)x=a^\sigma (\phi x)$ for all $\phi\in P^*$, $a\in A$, $x\in P$.
    The assignment $P\mapsto P^*:\rproj{A}\to\rproj{A}$ is a contravariant functor,
    a duality in fact. Indeed, the map $\omega_P:P\to P^{**}$ given by
    $(\omega_Px)\phi =(\phi x)^\sigma u$ is well-known to be a {natural isomorphism}.
    Every $u$-hermitian space $(P,f)$ induces a map
    \[
    f_\ell : P\to P^*
    \]
    given by $(f_\ell x)(y)=f(x,y)$ for all $x,y\in P$.
    We say that $f$ is \emph{unimodular} if $f_\ell$ is bijective.
    We denote by $\Herm[u]{A,\sigma}$ (resp.\ $\herm[u]{A,\sigma}$)
    the category of unimodular (resp.\ arbitrary) $u$-hermitian spaces
    over $(A,\sigma)$ with isometries as morphisms.

\medskip

    Let $P\in\rproj{A}$. The \emph{hyperbolic $u$-hermitian space}
    associated with $P$ is \linebreak $(P\oplus P^*,\Hyp{P})$, where $\Hyp{P}(x\oplus \phi,x'\oplus \phi')=\phi x' +(\phi'x)^\sigma u$
    for all $x,x'\in P$ and $\phi,\phi'\in P^*$.
    In case $A=B\times B^\op$ and $\sigma$ is the \emph{exchange involution} $(a,b^\op)\mapsto (b,a^\op)$,
    every hermitian space $(P,f)\in \Herm[u]{A,\sigma}$ is isomorphic to $(Q\oplus Q^*,\Hyp{Q})$
    for $Q=P(1_B,0_B^\op)$. In particular, $(P,f)$ is determined up to isometry by $P$.

\medskip

    Let $R$ be  a commutative ring and let $S$ be a commutative
    $R$-algebra. Assume henceforth that $(A,\sigma)$
    is an $R$-algebra with an $R$-involution. We let $A_S=A\otimes_R S$ and $\sigma_S=\sigma\otimes_R\id_S$.
    In addition, for every $P\in\rproj{A}$, we set $P_S=P\otimes_RS\in\rproj{A_S}$,
    where $P_S$ is viewed as a right $A_S$-module by linearly
    extending $(x\otimes s)(a\otimes s')=xa\otimes ss'$
    for all $x\in P$, $a\in A$, $s,s'\in S$.

    Every  $u$-hermitian space $(P,f)\in\herm[u]{A,\sigma}$ gives
    rise to a $u$-hermitian space $(P_S,f_S)\in\herm[u]{A_S,\sigma_S}$ with $f_S$ is given by
    \[
    f_S(x\otimes s,x'\otimes s')=f(x,x')\otimes ss'\qquad\forall\, x,x'\in P,\, s,s'\in S\ .
    \]
    It is well-known that if $(P,f)$ is unimodular, then so is $(P_S,f_S)$.

	When $A\in\rproj{R}$ and $2\in\units{R}$,
    the assignment $S\mapsto U(f_S)$ is the functor of points
    of an affine group scheme over $R$, denoted $\uU(f)$.
    This group scheme is smooth when $f$ is unimodular;
    see~\cite[Apx.]{BayFi14}.
    We further let $\uU(A,\sigma)$ denote the affine
    group scheme over $ R$ whose $S$-points are given by $\uU(A,\sigma)(S)=U(A_S,\sigma_S):=\{a\in A_S\suchthat a^\sigma a=1\}$.

\medskip

    We shall need the following well-known strengthening of Witt's Cancellation Theorem. A proof
    can be found in \cite[Th.~7.9.1]{SchQuadraticAndHermitianForms}, for instance.

    \begin{thm}\label{TH:witt-cancellation}
        Let $F$ be a field of characteristic not $2$.
        Assume $A$ is a finite dimensional $F$-algebra and $\sigma$ is $F$-linear.
        Then cancellation holds for unimodular $u$-hermitian forms over $(A,\sigma)$.
    \end{thm}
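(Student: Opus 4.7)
The plan is to prove Witt cancellation---that $(P,f)\oplus(Q,g)\cong(P',f')\oplus(Q,g)$ in $\Herm[u]{A,\sigma}$ implies $(P,f)\cong(P',f')$---by reducing to the semisimple case, then to the case of a division algebra, and finally invoking the classical reflection argument to establish a Witt extension theorem from which cancellation is immediate.

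\textbf{Step 1: Reduction modulo the Jacobson radical.} Let $J=\rad(A)$. Since $\sigma$ is an $F$-linear anti-automorphism, $J$ is $\sigma$-stable, so $\bar A=A/J$ inherits an involution $\bar\sigma$. For a unimodular space $(P,f)$, the reduction $(\bar P,\bar f)$ remains unimodular, because a map between finitely generated projective $A$-modules is an isomorphism iff its reduction modulo $J$ is (Nakayama). One then shows that isometry of unimodular forms over $A$ is equivalent to isometry of their reductions. The nontrivial direction, lifting an isometry $\bar\phi$ to $\phi$, proceeds by successive approximation along the filtration $J\supset J^2\supset\cdots$: at each step the obstruction to extending a congruence $\phi_n^*f'\equiv f\pmod{J^n}$ to one modulo $J^{n+1}$ is controlled by the Lie algebra of $\uU(f)$ and vanishes because $f$ is unimodular (equivalently, $\uU(f)$ is smooth) and $2\in\units{A}$. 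Since $J$ is nilpotent, finitely many lifts suffice.

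\textbf{Step 2: The semisimple case.} With $A$ now semisimple Artinian, decompose $(A,\sigma)$ as a product of $\sigma$-simple factors. Each factor is either of exchange type $(B\times B^\op,\mathrm{swap})$---in which case the excerpt already records that a unimodular hermitian space is determined by its underlying right-$B$-module, so cancellation reduces to Krull--Schmidt---or a simple $F$-algebra $\mathrm{M}_n(D)$ with an $F$-linear involution, which is Morita-equivalent as a ring with involution to $(D,\tau)$ for some division $F$-algebra $D$ with $F$-linear involution $\tau$ (possibly after rescaling $u$ by a $\tau$-symmetric unit). Thus the question reduces to cancellation for unimodular $u'$-hermitian forms over $(D,\tau)$.

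\textbf{Step 3: Witt extension over $(D,\tau)$.} Because $\mathrm{char}\,F\neq2$, every unimodular hermitian form over $(D,\tau)$ diagonalizes. For any vector $v$ with $f(v,v)\in\units{D}$, the hermitian reflection
\[
s_v(x)=x-v\cdot f(v,v)^{-1}\bigl(f(v,x)+u'\,f(x,v)^\tau\bigr)
\]
is a well-defined isometry fixing $v^\perp$ pointwise and sending $v$ to $-v$. Standard induction on dimension, combined with the fact that hyperbolic planes split off freely from unimodular forms, yields the Witt extension theorem: any isometry between nondegenerate subspaces of a unimodular form extends to a global isometry. Applying this to the two canonical embeddings of $(Q,g)$ into the isometric sums $(P,f)\oplus(Q,g)$ and $(P',f')\oplus(Q,g)$, the resulting global isometry identifies their orthogonal complements $(P,f)$ and $(P',f')$.

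The main obstacle is Step 1, the deformation-theoretic lifting across the radical: here the careful bookkeeping of the obstructions---and the essential use of the hypotheses $2\in\units{A}$ and $f$ unimodular---makes the reduction possible. Steps 2 and 3 are then largely a matter of invoking the classical structure theory of semisimple algebras with involution and the classical reflection argument over division algebras.
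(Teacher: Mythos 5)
The paper does not prove this statement itself; it delegates to Scharlau \cite[Th.~7.9.1]{SchQuadraticAndHermitianForms}, so there is no in-paper argument to compare against. Your outline follows the canonical route that such a reference takes, and the skeleton is correct: lift across the nilpotent radical by successive approximation (this is exactly \cite[Th.~2.2]{QuSchSch79}, which the paper invokes in Section~\ref{subsection:herm-over-ord}); over the semisimple quotient, split the exchange-type factors (where Krull--Schmidt handles cancellation of modules) and Morita-reduce the simple factors with involution to a division algebra $(D,\tau)$; then run the reflection argument. Your reflection formula is correct --- since $u'\,f(x,v)^\tau=f(v,x)$ it simplifies to the usual $s_v(x)=x-2v\,f(v,v)^{-1}f(v,x)$, and a short computation confirms it is an isometry whenever $f(v,v)\in\units{D}$.

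The one genuine gap is the sentence ``Because $\mathrm{char}\,F\neq2$, every unimodular hermitian form over $(D,\tau)$ diagonalizes.'' This is false in the alternating case: when $D$ is a field, $\tau=\id_D$ and $u'=-1$, a unimodular $(-1)$-hermitian form is a nondegenerate alternating bilinear form, and since $2\in\units{D}$ we get $f(v,v)=0$ for every $v$, so there is no anisotropic vector, no reflection $s_v$ can be formed, and the form certainly does not diagonalize. This is precisely the ``alternating type'' the paper carves out separately in Section~\ref{subsection:herm-over-ord}. The fix is easy --- alternating forms are hyperbolic and classified by rank, so cancellation there is immediate --- but the case must be split off \emph{before} Step~3 begins; as written you have quietly assumed it away, and the reflection argument does not cover it.
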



\subsection{Transfer into the Endomorphism Ring}
\label{subsection:transfer}

    We now recall the method of transfer into the endomorphism ring.
    This is in fact a special case of transfer in \emph{hermitian categories};
    see \cite[Pr.~2.4]{QuSchSch79} or \cite[II.\S3]{Kn91}.

\medskip

    Let $(E,\tau)$ be a ring with involution. Two elements
    $a,b\in E$ are said to be $\tau$-congruent, denoted $a\sim_\tau b$,
    if there exists
    $v\in\units{E}$ such that $a=v^\tau bv$. This is  an equivalence
    relation.
    Let
    \[
    \units{\Sym}(E,\tau)=\{a\in \units{E}\suchthat a^\tau=a\}\qquad\text{and}\qquad
    \mathrm{H}(E,\tau)=\units{\Sym}(E,\tau)/\sim_\tau\ .
    \]
    The following well-known result allows one to translate statements
    about isometry of hermitian forms into statements about $\tau$-congruence.

    \begin{prp}\label{PR:transfer-into-edomorphism-ring}
        Let $(A,\sigma)$ be a ring with involution, and let $u\in \Cent(A)$
        be an element satisfying $u^\sigma u=1$.
        Let $(P,f)$ be a unimodular $u$-hermitian space over $(A,\sigma)$,
        and let $\Herm[u]{P}$ denote the
        set of unimodular
        $u$-hermitian spaces over $(A,\sigma)$ with base module $P$.
        Let $E=\End_A(P)$, and define $\tau:E\to E$  by
        $g^\tau=f_\ell^{-1}g^*f_\ell$. Equivalently, $g^\tau$ is determined
        by the identity $f(g^\tau x,y)=f(x, gy)$. Then $(E,\tau)$ is a ring with involution and
        there is a one-to-one correspondence
        \[
        \Herm[u]{P}/\cong \quad\longleftrightarrow\quad \mathrm{H}(E,\tau)
        \]
        given by sending the isometry class of $h\in\Herm[u]{P}$
        to the $\tau$-congruence class of $f_\ell^{-1}h_\ell\in E$.
    \end{prp}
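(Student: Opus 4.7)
The plan is to reduce the proposition to a short series of identities among the duality functor $(-)^*$, the biduality isomorphism $\omega_P : P \to P^{**}$, and $f_\ell$. The single preliminary fact I would establish first is the operator identity $f_\ell^* \omega_P = f_\ell$, which is a direct reformulation of the $u$-hermitian symmetry $f(x,y) = f(y,x)^\sigma u$; evaluating both sides on $x \in P$ and pairing with $y \in P$ gives the claim, once one recalls the right $A$-action on $P^*$. This identity is the algebraic backbone of the entire statement.

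With it in hand, I would check that $\tau$ is a ring involution on $E$. Additivity, the equality $\tau(\id_P)=\id_P$, and the contravariance $\tau(gh) = \tau(h)\tau(g)$ are formal consequences of $(-)^*$ being a contravariant additive functor. For $\tau^2 = \id$ I would compute
\[
\tau^2(g) = f_\ell^{-1}(f_\ell^{-1} g^* f_\ell)^* f_\ell = f_\ell^{-1} f_\ell^* g^{**} (f_\ell^*)^{-1} f_\ell,
\]
then substitute $f_\ell^* = f_\ell\, \omega_P^{-1}$ (from the key identity) and the naturality $g^{**} \omega_P = \omega_P g$ of biduality; the outer factors collapse to $\omega_P^{-1} g^{**} \omega_P = g$.

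Next I would construct the forward map. To $h \in \Herm[u]{P}$ assign $\alpha(h) := f_\ell^{-1} h_\ell \in E$. Since both $f_\ell$ and $h_\ell$ are $A$-linear isomorphisms, $\alpha(h) \in \units{E}$; applying the key identity with $h$ in place of $f$ gives $h_\ell^* \omega_P = h_\ell$, which rearranges to $\alpha(h)^\tau = \alpha(h)$. If $\phi : (P,h) \to (P,h')$ is an isometry, then $\phi \in \Aut_A(P) = \units{E}$ and the isometry condition translates to $h'_\ell = \phi^* h_\ell \phi$; left-multiplying by $f_\ell^{-1}$ and inserting $f_\ell^{-1} f_\ell$ yields $\alpha(h') = \phi^\tau \alpha(h)\, \phi$, so the induced map $\Herm[u]{P}/{\cong}\, \to\, \mathrm{H}(E,\tau)$ is well-defined.

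Finally, I would produce the inverse by the evident recipe: to $a \in \units{\Sym}(E,\tau)$ associate the form $h^a$ defined by $h^a_\ell = f_\ell a$, and verify via the key identity (read in reverse) that $h^a$ is indeed $u$-hermitian, unimodularity being automatic since $f_\ell a$ is an isomorphism. The same cocycle computation as in the previous paragraph, run backwards, shows that $\tau$-congruent elements yield isometric forms, and the two constructions are visibly mutually inverse. I do not anticipate any genuine obstacle; the only delicacy is bookkeeping of the right $A$-module structure on $P^*$ and the asymmetric role of $u$, and isolating $f_\ell^* \omega_P = f_\ell$ at the outset is precisely what makes the rest mechanical.
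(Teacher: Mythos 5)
Your proof is correct. The paper does not supply its own argument here—it simply cites Lemma~3.8.1 of Bayer-Fluckiger, Parimala and Serre—and what you have written out is precisely the standard transfer argument: the identity $f_\ell^*\,\omega_P = f_\ell$ is the right invariant to isolate, and everything else (involutivity of $\tau$, $\alpha(h)^\tau = \alpha(h)$, the cocycle computation, and the inverse $a \mapsto h^a$ with $h^a_\ell = f_\ell a$) follows mechanically from it, exactly as you say. One cosmetic point: from an isometry $\phi:(P,h)\to(P,h')$ the relation you actually extract is $h_\ell = \phi^* h'_\ell\,\phi$ (your formula $h'_\ell = \phi^* h_\ell\,\phi$ corresponds to an isometry in the opposite direction), but this has no effect on the argument since isometry is symmetric and the resulting $\tau$-congruence is the same.
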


    \begin{proof}
        See for instance \cite[Lm.~3.8.1]{BayParSerr13}.
    \end{proof}

    \begin{remark}\label{RM:transfer-is-OK-with-scalar-ext}
        The correspondence in Proposition~\ref{PR:transfer-into-edomorphism-ring} is compatible with scalar extension:
        Let $S$ be a commutative $R$-algebra and suppose $(A,\sigma)$ is an involutary
        $R$-algebra. Then there
        is a natural isomorphism $\End_{A_S}(P_S)\cong E_S$ (see for instance \cite[Lm.~1.2]{BayFi14})
        and the diagram
        \[
        \xymatrix{
        \Herm[u]{P}/\cong \ar@{<->}[r] \ar[d]_{h\mapsto h_S} &  \mathrm{H}(E,\tau) \ar[d]^{a\mapsto a\otimes 1} \\
        \Herm[u]{P_S}/\cong \ar@{<->}[r]  &  \mathrm{H}(E_S,\tau_S)
        }
        \]
        commutes. Moreover, the isomorphism $\End_{A_S}(P_S)\cong E_S$ restricts
        to a natural isomorphism $U(f_S)\cong U(E_S,\tau_S)$, and hence
        $\uU(f)\cong \uU(E,\tau)$.
    \end{remark}

    \begin{remark}\label{RM:summand-involution}
        In Proposition~\ref{PR:transfer-into-edomorphism-ring},
        if $(P,f)=(P_1,f_1)\oplus(P_2,f_2)$ and $e\in E$ is the orthogonal projection
        of $P$ onto $P_1$, then $e^\tau=e$. Indeed, $f(ex,y)=f(x,ey)$ for all $x,y\in P$.
    \end{remark}

\section{Hereditary Orders}
\label{section:orders}

    This section recalls facts about hereditary orders that will be used in the sequel.
    Unless specified otherwise, $R$ is a Dedekind domain with fraction field
    $F$.
    For every $0\neq\frakp\in \Spec(R)$, denote by $R_\frakp$ the localization
    of $R$ at $\frakp$, and let $\hat{R}_\frakp$ denote the completion of $R_\frakp$. The
    Jacobson radical of a ring $A$ is denoted $\Jac(A)$.

\subsection{Generalities on Orders}
\label{subsection:orders}

    Let $E$ be a finite-dimensional $F$-algebra.
    Recall that an \emph{$R$-order} in $E$ is an $R$-subalgebra
    $A$ such that $A$ is finitely generated as an $R$-module and $A\cdot F=E$.
    Equivalently, an $R$-algebra $A$ is an $R$-order (in some $F$-algebra, necessarily
    isomorphic to $A_F:=A\otimes_R F$) if $A$ is $R$-torsion-free and finitely generated   as an $R$-module.
    Since $R$ is a Dedekind domain, this implies  $A\in\rproj{R}$ (\cite[\S2E]{La99}).

\medskip

    Let $A$ be an $R$-order. Recall that  $A$ is  \emph{hereditary}
    if all one-sided ideals of $A$ are projective,
    and $A$ is \emph{maximal} if $A$ is not
    properly contained in an $R$-order in $A_F$.
    See \cite{MaximalOrders} for  details and examples.

\medskip

	Recall further  that $E$ is a \emph{separable} $F$-algebra if $E$ is semisimple and
	$\Cent(E)$ is a product of separable field extension of $F$.
	
	There is a generalization of the notion of separability to $R$-algebras that will be
	needed  in Section~\ref{section:cohomological}:
	An $R$-algebra $A$ is   \emph{separable}  if $A$ is projective when viewed
    as a left $A\otimes_R A^\op$-module via $(a\otimes b^\op)x=axb$ ($a,b,x\in A$).
    This definition agrees with the definition in the previous paragraph when $R$ is a field (\cite[Cor.~II.2.4]{DeMeyIngr71SeparableAlgebras}).
   	Separable $R$-algebras with center $R$ are also called \emph{Azumaya}.
    The separable $R$-orders $A$ in $E$ can also be characterized
    as those which are \emph{unramified} in the  sense
    that for any $\frakp\in \Spec R$, the $k(\frakp)$-algebra $A\otimes_R k(\frakp)$ is  separable, where $k(\frakp)$
    is the fraction field of $R/\frakp$ (\cite[Cor.~II.1.7, Th.~II.7.1]{DeMeyIngr71SeparableAlgebras}).


    \begin{thm}[{\cite[Th.~1.7.1]{HijNish94}}]
    	A finite-dimensional $F$-algebra $E$ contains a hereditary $R$-order
    	if and only if $E$ is semisimple and the integral closure of $R$ in $\Cent(E)$,
    	denoted $Z$, is finitely
    	generated as an $R$-module. In this case
    	$E$ also has maximal $R$-orders, and $Z$ is contained in any hereditary $R$-order in $E$.
    \end{thm}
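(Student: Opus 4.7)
The plan is to establish the two implications separately and then derive the remaining assertions on maximal orders and the containment $Z\subseteq A$.

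\emph{Forward direction.} Assume $A$ is a hereditary $R$-order in $E$. Since $A$ is $R$-torsion-free and $R\setminus\{0\}$ consists of regular central elements of $A$, the algebra $E=A_F$ is the classical ring of quotients of the Noetherian hereditary ring $A$; the general structure theory of hereditary Noetherian rings (see~\cite{MaximalOrders}) then forces $E$ to be semisimple. The assertion that $Z$ is finitely generated over $R$ will follow from the sharper inclusion $Z\subseteq A$, since a submodule of the Noetherian $R$-module $A$ is automatically finitely generated.

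\emph{Proving $Z\subseteq A$.} I would pass to completions. Hereditariness is a local property that is preserved under faithfully flat base change, and whether an element of $E$ lies in $A$ can be tested locally at each nonzero prime $\frakp\in\Spec R$ after tensoring with the faithfully flat $R$-algebra $\hat R_\frakp$. So it suffices to prove $Z\otimes_R\hat R_\frakp\subseteq A\otimes_R\hat R_\frakp$ for each $\frakp$. Over the complete DVR $\hat R_\frakp$ the hereditary orders in a semisimple algebra are completely classified: in each simple component they are, up to conjugation, certain block-upper-triangular ``standard'' orders inside a matrix ring over the maximal order of a central division algebra (see~\cite{MaximalOrders}). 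A direct inspection of this classification shows that every such hereditary order contains the (unique) maximal order of the center of the ambient simple component, which is exactly what is needed.

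\emph{Backward direction and existence of maximal orders.} Suppose $E$ is semisimple and $Z$ is finitely generated as an $R$-module. Then $Z$ is a finite product of Dedekind domains, one for each simple factor of $\Cent(E)$, and the finite generation forces $\Cent(E)/F$ to be separable; hence $E$ is a separable $F$-algebra. Choosing any $F$-basis of $E$ and scaling it to land in $Z$ yields a full $R$-lattice $L\subseteq E$ whose left order is an $R$-order. A standard ascending-chain argument (using that every $R$-order in $E$ is an $R$-submodule of the finitely generated $Z$-module $E$) shows that any $R$-order enlarges to a maximal $R$-order. The classical theorem of Auslander--Goldman (see~\cite{MaximalOrders}) then shows that maximal $R$-orders in separable $F$-algebras are hereditary, supplying both a maximal and a hereditary $R$-order.

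The main obstacle is the inclusion $Z\subseteq A$ for an $A$ that is merely hereditary rather than maximal. This is precisely what forces one to use the full local structure theory of hereditary orders over a complete DVR, rather than the much easier fact that $Z$ is contained in every maximal order. Verifying semisimplicity of $E$ under the hereditary hypothesis is also subtle, but is absorbed into the general theory of Noetherian hereditary rings.
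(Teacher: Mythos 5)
The paper gives no proof of this statement: it is cited verbatim from Hijikata--Nishida, so there is no ``paper proof'' to compare against. Assessing your attempt on its own merits, the backward direction contains a genuine error which then propagates.

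You claim that ``the finite generation [of $Z$] forces $\Cent(E)/F$ to be separable; hence $E$ is a separable $F$-algebra.'' This is false, and the paper itself says so in the paragraph immediately following the theorem: $Z$ is \emph{always} finitely generated when $E/F$ is separable, and the whole point of phrasing the criterion in terms of $Z$ rather than separability is to cover the inseparable case. A concrete counterexample: take $R=k[[t]]$ with $k$ perfect of characteristic $p>0$, $F=k((t))$, and $K=F(t^{1/p})$, a purely inseparable degree-$p$ extension. The integral closure of $R$ in $K$ is $k[[t^{1/p}]]$, which is free of rank $p$ over $R$, hence finitely generated. So finite generation of $Z$ does not imply separability of $\Cent(E)/F$. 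Because of this, your appeal to Auslander--Goldman (``maximal $R$-orders in separable $F$-algebras are hereditary'') is unavailable in general; note that the paper's own Theorem~\ref{TH:separable-maximal-hereditary} establishes ``maximal $\Rightarrow$ hereditary'' via condition (H.$0$) of Hijikata--Nishida's Theorem~1.6 precisely because that criterion does not require separability.

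A second problem is the ascending-chain step. You assert that ``every $R$-order in $E$ is an $R$-submodule of the finitely generated $Z$-module $E$,'' but $E$ is not finitely generated over $Z$ (it contains $F$, which is not finitely generated over $R$). The classical bound ``all orders containing a fixed full lattice lie inside a fixed full lattice'' is usually obtained from the nondegeneracy of the reduced trace form on $E$ --- which again requires separability of $E/F$. In the non-separable case one has to argue differently, e.g.\ by first observing that any semisimple $E$ is separable over its center $\Cent(E)=Z_F$, and then doing the trace-form argument relative to $Z$ (which is where the finite generation of $Z$ is actually used). Also, ``choosing any $F$-basis of $E$ and scaling it to land in $Z$'' is not meaningful, since $Z\subseteq\Cent(E)$ is far smaller than $E$; you want the standard structure-constants construction of a full $R$-lattice that is a subring. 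The forward direction is plausible in outline (semisimplicity of $E$ via the structure theory of Noetherian hereditary rings, and $Z\subseteq A$ by completion and inspection of the explicit classification), but the completion step needs justification that $E\otimes_F\hat F_\frakp$ remains semisimple when $\Cent(E)/F$ may be inseparable, and that $Z\otimes_R\hat R_\frakp$ lands in the integral closure over $\hat R_\frakp$.
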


\remove{
    A finite-dimensional $F$-algebra $E$ contains a hereditary $R$-order
    if and only if $E$ is semisimple and the integral closure of $R$ in $\Cent(E)$,
    denoted $Z$, is finitely
    generated as an $R$-module (\cite[Th.~1.7.1]{HijNish94}). In this case
    $E$ also has maximal $R$-orders, and $Z$ is contained in any hereditary $R$-order in $E$.
}

   	The $R$-algebra $Z$  in the theorem is always finitely generated as an $R$-module when  $E$ is separable over $F$.
    Examples of simple $F$-algebras $E$ where this fails can occur,
    for example, when $R$ is not excellent.

    \begin{thm}\label{TH:separable-maximal-hereditary}
        Let $A$ be an $R$-order.
        If $A$ is separable, then $A$ is maximal,
        and if $A$ is maximal, then $A$ is hereditary.
    \end{thm}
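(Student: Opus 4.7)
Plan. Both implications can be checked prime by prime: separability, maximality, and hereditariness of an $R$-order are each local on $R$ and are preserved and reflected by completion at a prime (see e.g.\ \cite{MaximalOrders, DeMeyIngr71SeparableAlgebras}). One may therefore assume throughout that $R$ is a complete discrete valuation ring.

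For the implication \emph{separable $\Rightarrow$ maximal}, suppose $A$ is separable. Then $Z:=\Cent(A)$ is a separable commutative $R$-order, and over a complete DVR such a $Z$ is a finite product of unramified DVR extensions of $R$---in particular integrally closed in $Z_F$, and hence a maximal $R$-order. The $Z$-algebra $A$ is then Azumaya, so the reduced-trace pairing $A\times A\to Z$, $(a,b)\mapsto\mathrm{Nrd}(ab)$, is unimodular, and the dual lattice $A^{\#}:=\{x\in A_F \mid \mathrm{Nrd}(xA)\subseteq Z\}$ coincides with $A$. Any $R$-order $A'\supseteq A$ in $A_F$ satisfies $\Cent(A')\supseteq Z$, and maximality of $Z$ in $Z_F$ forces $\Cent(A')=Z$; hence $A'$ is a $Z$-order and $A\subseteq A'\subseteq (A')^{\#}\subseteq A^{\#}=A$, giving $A'=A$.

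For \emph{maximal $\Rightarrow$ hereditary}, again reduce to the complete DVR case. By the classical structure theory of maximal orders (see \cite{MaximalOrders}), $A$ decomposes as a finite product of maximal orders in the simple factors of $A_F$, and each such factor is Morita equivalent to a maximal order $\Delta$ in a central division algebra over its center. The ring $\Delta$ is a noncommutative local domain whose Jacobson radical is a principal two-sided ideal, and every one-sided ideal of $\Delta$ is a power of $\Jac(\Delta)$---in particular principal and projective. Since hereditariness is preserved by Morita equivalence and finite products, $A$ itself is hereditary.

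The main obstacle in both parts is the appeal to structure theory over complete DVRs: the unimodularity of the reduced-trace pairing for Azumaya algebras (needed to pin down $A$ as a maximal order), and the description of maximal orders in central division algebras as noncommutative principal ideal rings (needed to produce projective one-sided ideals). Both results are classical, and I would quote them rather than reprove them.
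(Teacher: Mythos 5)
Your proof is correct, but for the implication \emph{separable $\Rightarrow$ maximal} you take a genuinely different route from the paper. The paper does \emph{not} pass to the completion: it works over the Dedekind domain $R$ directly, sets $S=\Cent(A)$, invokes \cite[Th.~5.1]{Olivier83} to conclude $S$ is integrally closed in $\Cent(A_F)$, and then uses the Azumaya double-centralizer isomorphism $A\otimes_S \Cent_{A'}(A)\xrightarrow{\sim}A'$ from \cite[Pr.~2.7]{Sa99} to get $A'=A$ for any order $A'\supseteq A$. You instead reduce to the complete DVR case (legitimate, by Theorem~\ref{TH:local-hereditary} and the fact that separability passes to completions), describe $Z=\Cent(A)$ explicitly as a product of unramified DVR extensions, and then run a dual-lattice argument $A\subseteq A'\subseteq (A')^{\#}\subseteq A^{\#}=A$ via the unimodularity of the reduced trace pairing on an Azumaya algebra. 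Both are standard and sound; the paper's version is slightly slicker because it avoids the reduction and the discriminant machinery, while yours makes the mechanism (a unimodular bilinear form pins down the maximal lattice) more concrete. For \emph{maximal $\Rightarrow$ hereditary} the paper simply cites condition (H.$0$) of \cite[Th.~1.6]{HijNish94}; your sketch via Morita reduction to the valuation ring $\calO_D$ of a division algebra over a complete DVR is the classical underlying argument and is fine.

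One small slip you should fix: you write the pairing as $(a,b)\mapsto\mathrm{Nrd}(ab)$ and define $A^{\#}=\{x\in A_F\mid \mathrm{Nrd}(xA)\subseteq Z\}$, but $\mathrm{Nrd}$ is the (multiplicative, non-bilinear) reduced norm; you mean the reduced \emph{trace} $\mathrm{Trd}$, as the surrounding words ``reduced-trace pairing'' indicate. With $\mathrm{Trd}$ in place of $\mathrm{Nrd}$ the argument is correct: $\mathrm{Trd}(A')\subseteq Z$ because elements of the order $A'$ are integral over $Z$ and $Z$ is integrally closed in $Z_F$, giving $A'\subseteq(A')^{\#}$, and $A^{\#}=A$ is exactly the unimodularity of the trace form for Azumaya algebras.
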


    \begin{proof}
        The second statement follows from condition (H.$0$) in
        \cite[Th.~1.6]{HijNish94}, so we turn to the first
        statement.
        Assume $A$ is separable
        and let $S=\Cent(A)$. Then $A$ is Azumaya over $S$ and
        $S$ is separable over $R$ (\cite[Th.~3.8]{DeMeyIngr71SeparableAlgebras}).
        Since $R$ is integrally closed in $F$,
        and $S$ is separable and projective over $R$,
        the ring $S$ is integrally closed in $S_F=\Cent(A_F)$ (\cite[Th.~5.1]{Olivier83}).
        Let $A'$ be an $R$-order with $A\subseteq A'\subseteq A_F$,
        and
        let $S'$ be the centralizer of $A$ in $A'$. Then $S'\subseteq\Cent(A_F)$
        and $S'$ is integral over $R$, hence $S'=S$.
        In addition, since $A$ is Azumaya over $S$,
        the map $a\otimes s'\mapsto as':A\otimes_SS'\to A'$ is an isomorphism (\cite[Pr.~2.7]{Sa99}),
        so $A'=A$.
    \end{proof}

    \begin{thm}\label{TH:local-hereditary}
        Let $A$ be an $R$-order. Then $A$ is hereditary
        (resp.\ maximal) if and only if $A\otimes_R \hat{R}_\frakp$
        is hereditary (resp.\ maximal) for all $0\neq \frakp\in \Spec(R)$.
    \end{thm}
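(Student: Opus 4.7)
The plan is to factor the equivalence through the localization $A_\frakp := A\otimes_R R_\frakp$, proceeding in two steps: first that hereditary/maximal is a local property (checkable at each $A_\frakp$), and then that localization at $\frakp$ can be replaced by completion.

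For the localization step, the key point for hereditariness is that every right ideal of $A_\frakp$ has the form $J_\frakp$ for a right ideal $J$ of $A$ (take $J=I\cap A$), combined with the fact that projectivity of a finitely generated module over a Noetherian ring is a local property. For maximality, an $R$-order $A'$ in $A_F$ equals $\bigcap_\frakp A'_\frakp$ (intersection inside $A_F$), so $A\subsetneq A'$ forces $A_\frakp\subsetneq A'_\frakp$ for some $\frakp$; conversely, given $B\supsetneq A_\frakp$ an $R_\frakp$-order in $A_F$, the subring $A'' := B\cap \bigcap_{\frakq\neq \frakp}A_\frakq$ of $A_F$ is an $R$-order strictly containing $A$, by the local-global correspondence for lattices over a Dedekind domain.

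For the completion step, faithful flatness of $\hat R_\frakp/R_\frakp$ implies that a finitely generated $A_\frakp$-module $M$ is projective iff $M\otimes_{R_\frakp}\hat R_\frakp$ is projective over $\hat A_\frakp$. I would combine this with the characterization (cf.\ \cite[Th.~1.6]{HijNish94}) that a Noetherian $R_\frakp$-order is hereditary iff its Jacobson radical is projective as a one-sided module; since $\Jac(A_\frakp)\cdot \hat A_\frakp = \Jac(\hat A_\frakp)$ (using that $A_\frakp/\Jac(A_\frakp)$ is semisimple Artinian and preserved by $-\otimes_{R_\frakp}\hat R_\frakp$), hereditariness transfers in both directions. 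For maximality, there is a classical inclusion-preserving bijection between $R_\frakp$-orders of $A_F$ containing $A_\frakp$ and $\hat R_\frakp$-orders of $\hat A_\frakp\otimes_{\hat R_\frakp}\hat F_\frakp$ containing $\hat A_\frakp$, given by $B\mapsto B\otimes_{R_\frakp}\hat R_\frakp$; hence $A_\frakp$ is maximal iff $\hat A_\frakp$ is.

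The main obstacle I expect is the verification of $\Jac(\hat A_\frakp)=\Jac(A_\frakp)\cdot\hat A_\frakp$ and of the order correspondence under completion. The first rests on $A_\frakp/\Jac(A_\frakp)$ being Artinian semisimple (which follows because $R_\frakp$ is local and $A_\frakp$ is module-finite, so $\Jac(R_\frakp)\cdot A_\frakp\subseteq \Jac(A_\frakp)$ and the residue algebra is finite-dimensional over the residue field), and the second is faithfully flat descent for lattices in a fixed $F$-vector space. Both are standard in the Reiner--Hijikata--Nishida framework but need care to adapt to the $\frakp$-adic setting.
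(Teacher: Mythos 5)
The paper's own ``proof'' of this statement is a pure citation, deferring to \cite[Th.~6.6]{BayFi14} for the hereditary part and \cite[Cor.~11.6]{MaximalOrders} for the maximal part; so you have supplied a self-contained argument where the paper does not. Your two-step factorization through $A_\frakp$ is the standard route (and is essentially what the cited results themselves do), and all of the key ingredients you identify are correct: for localization, ideals of $A_\frakp$ are extended from $A$ and projectivity is Zariski-local, while maximality follows from the local-global gluing of lattices $A' = \bigcap_\frakp A'_\frakp$ over a Dedekind domain; for completion, you correctly reduce hereditariness to projectivity of the Jacobson radical (necessary because ideals of $\hat A_\frakp$ need not extend from $A_\frakp$), and the identity $\Jac(\hat A_\frakp) = \Jac(A_\frakp)\hat A_\frakp$ via semisimplicity of $A_\frakp/\Jac(A_\frakp)$ and nilpotence modulo $\frakm_\frakp A_\frakp$ is exactly right.

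One point you should pin down more carefully: the criterion ``hereditary $\Leftrightarrow$ $\Jac$ projective'' that you attribute to \cite[Th.~1.6]{HijNish94} must hold for a module-finite algebra over a \emph{non-complete} DVR $R_\frakp$, and the most commonly quoted version (e.g.\ \cite[Th.~39.1]{MaximalOrders}) is stated over a complete DVR. The non-complete version is indeed available (it goes back to Harada and to \cite[p.~5]{AusGold60B}), but one cannot invoke it as the bridge between $A_\frakp$ and $\hat A_\frakp$ without verifying this, since otherwise the argument would be circular: deducing the criterion over $R_\frakp$ from the criterion over $\hat R_\frakp$ uses exactly the equivalence you are trying to prove. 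With the correct reference this gap closes, and the rest of the proof (faithfully flat descent of projectivity for finitely presented modules, and Reiner's lattice correspondence $L\mapsto \hat L$ being an inclusion-preserving bijection that restricts to orders) is standard and correctly deployed.
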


    \begin{proof}
        See \cite[Th.~6.6]{BayFi14} and \cite[Cor.~11.6]{MaximalOrders}.
    \end{proof}

\remove{
	\begin{prp}\label{PR:hereditary-descent}
    	Assume $R$ is a DVR, and let $R'$ be a DVR which is also a
    	faithfully flat $R$-algebra. Denote the maximal ideals of $R$, $R'$ by $\frakm$, $\frakm'$
    	respectively, and suppose that $k':=R'/\frakm'$ is a separable algebraic field extension of $k:=R/\frakm$.
    	Let $A$ be an $R$-order. Then
    	\begin{enumerate}
    		\item[(i)] $\Jac(A)\otimes_RR'=\Jac(A\otimes_RR')$ and
    		\item[(ii)] $A$ is hereditary if and only if $A\otimes_{R}R'$ is hereditary.
    	\end{enumerate}
    \end{prp}

    \begin{proof}
    	Write $A'=A\otimes_RR'$, $J=\Jac(A)$, $J'=\Jac(A')$ and view $A$ as a subring of $A'$.
    	Since $R'$ is a flat $R$-module, the map $J\otimes_RR'\to A'$ is injective, so we may identify
    	$J\otimes_RR'$ with $JR'$.
    	
    	(i)  We need to show that $J'=JR'$.
    	By  \cite[Th.~6.15]{MaximalOrders}, there is $n\in\N$ such that $J^n\subseteq \frakm A\subseteq J$
    	and $J'^n\subseteq \frakm'A'\subseteq J'$.
    	In particular, we may view $A/J$ as a $k$-algebra,
    	and therefore,
    	$A'/JR'\cong (A/J)\otimes_RR'\cong (A/J)\otimes_k k'$.
    	Since $A/J$ a semisimple  finite-dimensional  $k$-algebra and $k'$ is separable over $k$,
    	the ring $(A/J)\otimes_k k'$ is semisimple, and hence $J'\subseteq JR'$.
    	On the other hand, $(JR')^n=J^nR'\subseteq\frakm AR'=\frakm A'\subseteq \frakm'A'\subseteq J'$, hence
    	$JR'\subseteq J'$, because $J'$ is semiprime. Therefore, $J'=JR'$.

    	(ii) By \cite[p.~5]{AusGold60B}, it is enough to prove that $J:=\Jac(A)$ is projective as a right $A$-module
    	if and only if  $J':=\Jac(A')$ is projective as a right $A'$-module.
    	The direction ($\Longrightarrow$) follows from (i) and the other direction
    	follows from \cite[Prp.~4.80(2)]{La99} (the proof in \cite{La99} is given for $R$-modules but extends
        verbatim to $A$-modules once
        noting that $\Hom_{A\otimes S}(M\otimes_RS,N\otimes_RS)\cong \Hom_A(M,N)\otimes_RS$ whenever $M$ is a finitely presented
        $A$-module and $S$ is a flat $R$-algebra; see \cite[Th.~2.38]{MaximalOrders} for a proof of the latter).
    \end{proof}
}

    Let $A$ be an $R$-order in $E=A_F$, and let $M$ be a right $E$-module.
    Recall that a \emph{full $A$-lattice} in $M$ is a finitely generated
    $A$-submodule $L\subseteq M$ such that $LF=M$. Every right $A$-module $L$
    which is finitely generated and $R$-torsion-free is a full $A$-lattice in $L_F:=L\otimes_RF$.

    If $L$ and $L'$ are two full $A$-lattices in $M$ such that  $L\subseteq L'$,
    then $\mathrm{length}(L'/L)<\infty$ (see for instance \cite[Exer.~4.1]{MaximalOrders}).
    Furthermore, for all $A$-lattices $L$ and $L'$, we can embed $\Hom_A(L,L')$ in
    $\Hom_{A_F}(L_F,L'_F)$ via $\phi\mapsto \phi\otimes \id_F$.
    The image of this map is $\{\psi\in\Hom_{A_F}(L_F,L'_F)\suchthat\psi(L)\subseteq L'\}$.

    \begin{prp}\label{PR:lattices}
    	Let $A$ be a hereditary $R$-order, let $M$ be a right $A_F$-module,
    	and let $L$ be a full $A$-lattice in $M$. Let $L'=\Hom_A(L,A)$
    	and view it as a subset of $M':=\Hom_{A_F}(M,A_F)$. Then
    	$L\in\rproj{A}$ and $L=\{x\in M\suchthat \text{$\phi x\in A$ for all $\phi\in L'$}\}$.
		Furthermore, if  $M'$ is viewed as a left
		$A_F$-module via $(a\psi)x=\psi(xa)$ ($a\in A$, $\psi\in M'$, $x\in M$),  then $L'$
		is a full (left) $A$-lattice in $M'$.
    \end{prp}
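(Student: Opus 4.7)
The plan is to verify the three conclusions in the order (a) $L\in\rproj{A}$, (b) $L'$ is a full left $A$-lattice in $M'$, and (c) the equality $L=\{x\in M\suchthat \phi x\in A\text{ for all }\phi\in L'\}$. For (a), I would exploit the fact that the hereditary order $A$ sits inside the semisimple $F$-algebra $A_F$, so that $M=LF$ is a direct summand of some free module $A_F^n$. A finite $A$-generating set of $L\subseteq M\subseteq A_F^n$ admits a common $R$-denominator $d\in R\setminus\{0\}$, and multiplication by $d$ embeds $L$ into the free $A$-module $A^n$ (injectively, since $L$ is $R$-torsion-free). Heredity of $A$ then forces $L$ to be projective. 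For (b), since $L\in\rproj{A}$, the dual $L'=\Hom_A(L,A)$ is finitely generated projective as a left $A$-module, and the standard flat-base-change identity $\Hom_A(L,A)\otimes_R F\cong\Hom_{A_F}(L_F,A_F)=M'$ (applicable because $L$ is finitely presented) identifies $L'\otimes_R F$ with $M'$. This shows that $L'$ is a full left $A$-lattice in $M'$.

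For (c), I would introduce $K:=\{x\in M\suchthat \phi x\in A\text{ for all }\phi\in L'\}$, so that $L\subseteq K\subseteq M$, and define the right $A$-linear evaluation map $\rho:K\to\Hom_A(L',A)$ by $\rho(x)(\phi)=\phi(x)$. The composition $L\hookrightarrow K\xrightarrow{\rho}\Hom_A(L',A)$ is then the canonical biduality map $L\to\Hom_A(\Hom_A(L,A),A)$, which is an isomorphism because $L$ is finitely generated projective over $A$. Consequently it suffices to prove that $\rho$ is injective, as this forces $K=L$. Given $x\in K$ with $\phi(x)=0$ for every $\phi\in L'$, and any $\psi\in M'$, part (b) supplies a nonzero $d\in R$ with $d\psi\in L'$, and then $d\cdot\psi(x)=(d\psi)(x)=0$ together with $R$-torsion-freeness of $A_F$ yields $\psi(x)=0$. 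Thus $x$ is annihilated by all of $M'$, whence $x=0$ by reflexivity of $M$ over the semisimple ring $A_F$.

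I do not anticipate a serious obstacle beyond the bookkeeping required to identify the various canonical maps. The main care will be to verify that reflexivity of $L$ over $A$, reflexivity of $M$ over $A_F$, and the flat-base-change isomorphism $L'\otimes_R F\cong M'$ are all compatible with the ambient inclusions $L\subseteq M$ and $L'\subseteq M'$; once this compatibility is made explicit, the argument above is formal.
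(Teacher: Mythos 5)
Your proposal is correct. Part (a) — clearing denominators to embed $L$ in a free $A$-module, then invoking heredity (via Kaplansky's theorem) — matches the paper exactly. For parts (b) and (c) you take a genuinely different route. The paper's proof, once projectivity of $L$ is established, extracts a finite dual basis $\{x_i,\phi_i\}_{i=1}^n$ for $L$; the identity $x=\sum_i x_i\phi_i(x)$ extends from $L$ to all of $M=LF$, and this single formula immediately gives both the equality $L=\{x\in M : \phi(x)\in A\ \forall\phi\in L'\}$ (if $\phi_i(x)\in A$ for all $i$, then each $x_i\phi_i(x)\in L$) and, by the dual formula $\psi=\sum_i\psi(x_i)\phi_i$ applied to $\psi\in L'$ resp.\ $\psi\in M'$, the finite generation and fullness of $L'$ in $M'$. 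You instead prove (b) by the flat base-change isomorphism $\Hom_A(L,A)\otimes_R F\cong\Hom_{A_F}(L_F,A_F)$ — valid since $L$ is finitely presented and $F$ is flat over $R$ — and (c) by factoring the inclusion through the evaluation map $\rho:K\to\Hom_A(L',A)$ and using the biduality isomorphism $L\cong\Hom_A(\Hom_A(L,A),A)$ for finitely generated projectives, together with reflexivity of $M$ over the semisimple ring $A_F$ to get injectivity of $\rho$. Both routes are correct: the paper's dual-basis computation is more elementary and self-contained, whereas yours delegates the work to standard facts (flat base change, biduality of finitely generated projectives, reflexivity over semisimple rings). As you anticipate, the residual work in your approach is verifying that the various canonical identifications agree with the ambient inclusions $L\subseteq M$ and $L'\subseteq M'$; this is routine but worth writing out, especially the compatibility of the base-change isomorphism with $\phi\mapsto\phi\otimes\id_F$.
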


    \begin{proof}
    	The module $L$ is finitely generated by definition.
    	By Kaplansky's Theorem \cite[Th.~2.24]{La99}, in order to prove that $L$ is
    	projective, it is enough to embed it in a free $A$-module. Since $A_F$ is semisimple,
    	$M$ embeds as a submodule of $A_F^n$ for some $n\in\N$. Viewing $L$ as a f.g.\
    	$A$-submodule of $A_F^n$, there is some
    	$0\neq a\in R$ such that $aL\subseteq A^n$, so $L$ is isomorphic to a sumodule of $A^n$.
    	
    	Now that $L$ is f.g.\ projective, we can choose a finite
    	\emph{dual basis}
    	for $L$  (see \cite[Lm.~2.9, Rm.~2.11]{La99}), namely,
    	there are $\{x_i\}_{i=1}^n\subseteq L$ and $\{\phi_i\}_{i=1}^n\subseteq \Hom_A(L,A)$
    	such that $\sum_ix_i\phi_ix=x$ for all $x\in L$. It is easy to see that $\{x_i,\phi_i\}_{i=1}^n$
    	is also a dual basis of $M$. Suppose that $x\in M$ satisfies
    	$\phi x\in A$ for all $\phi\in L'$. Then $x=\sum_i x_i\phi_ix\in L$,
    	proving $L\supseteq\{x\in M\suchthat \text{$\phi x\in A$ for all $\phi\in L'$}\}$.
    	The opposite inclusion is clear.
    	
    	Finally, note that for all $\psi\in L'$, we have $\psi=\sum_i(\psi x_i)\phi_i$.
    	Indeed, $\sum_i(\psi x_i)\phi_i y=\psi(\sum_ix_i\phi_iy)=\psi y$ for
    	all $y\in L$. This shows that $L'$ is finitely generated as left $A'$-module. Applying the same
    	argument with elements of $M'$ shows that $FL'=M'$, so $L'$ is a full (left) $A$-lattice
    	in $M'$.
    \end{proof}

\subsection{The Structure of Hereditary Orders}
\label{subsection:structure-of-hered-orders}

    We now recall the  structure theory of hereditary orders over complete discrete valuation rings.
    The general case can be reduced to this setting by Theorem~\ref{TH:local-hereditary}.
    Our exposition follows \cite[\S39]{MaximalOrders}; proofs and further details can be found
    there. The results recalled here are in fact true for any henselian DVR (\cite[p.\ 364]{MaximalOrders}).

    \emph{Throughout, $R$ is  assumed to be a complete DVR,
    and $\nu=\nu_F$ denotes the corresponding (additive) valuation on $F$.}

\medskip

    We first recall the structure of \emph{maximal} orders in division $F$-algebras.

    \begin{thm}\label{TH:structure-of-max-orders}
        Let $D$ be a finite dimensional division algebra over $F$. Then the valuation $\nu$ extends uniquely
        to a valuation $\nu_D$ on $D$. Furthermore, the ring $\calO_D:=\{a\in D\suchthat \nu_D(a)\geq 0\}$
        is an $R$-order, and it
        is the only maximal $R$-order in $D$.
    \end{thm}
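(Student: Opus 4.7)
The plan is to construct $\nu_D$ explicitly via the reduced norm, deduce multiplicativity immediately, obtain the ultrametric inequality by reducing to the commutative case on the subfields $F[a] \subseteq D$, and then use the reduced trace form to show $\calO_D$ is an $R$-order. The key observation making the noncommutative case tractable is that every $a \in D$ generates a subring $F[a]$ which is a field (since $D$ is a division algebra), of finite degree over $F$; completeness of $F$ endows $F[a]$ with a unique valuation $\nu_{F[a]}$ extending $\nu$, and $F[a]$ is itself complete. Uniqueness of the extension to $D$ will be automatic once existence is proved.

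Set $n^2 = [D:F]$ and define $\nu_D(a) = \frac{1}{n}\nu(\rdnorm[D/F](a))$ for $a \in D^\times$. Multiplicativity is immediate from that of the reduced norm. For $a \in D^\times$, the identity $\rdnorm[D/F](a) = N_{F[a]/F}(a)^{n/[F[a]:F]}$ (verified after base change to an algebraic closure where $D$ splits, noting $[F[a]:F]$ divides $n$ by the double centralizer theorem) yields $\nu_D|_{F[a]} = \nu_{F[a]}$; in particular $\calO_D \cap F[a]$ is the valuation ring of $F[a]$. For the ultrametric inequality, given $a, b \in D^\times$ with $\nu_D(a) \geq \nu_D(b)$, write $a+b = b(ab^{-1} + 1)$ and note that $ab^{-1}$ and $1$ both belong to the valuation ring of the complete subfield $F[ab^{-1}]$, which is closed under addition; hence $ab^{-1} + 1 \in \calO_D$ and $\nu_D(a+b) \geq \nu_D(b) = \min(\nu_D(a),\nu_D(b))$. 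Uniqueness: any valuation on $D$ extending $\nu$ restricts to $\nu_{F[a]}$ on each $F[a]$ by uniqueness in the commutative case, and so coincides with $\nu_D$ on all of $D$.

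To show $\calO_D$ is the unique maximal $R$-order, first produce some $R$-order: given any finitely generated $R$-submodule $M \subseteq D$ containing $1$ with $MF = D$, the left order $\Lambda = \{a \in D : aM \subseteq M\}$ is a finitely generated $R$-subalgebra (bounded by $M$, since $1 \in M$) with $\Lambda F = D$. Elements of any $R$-order are integral over $R$ and hence lie in $\calO_D$, giving $\Lambda \subseteq \calO_D$. Conversely, for $x \in \calO_D$ and $y \in \Lambda$, the product $xy \in \calO_D$ is integral, so its reduced trace lies in $R$; thus $\calO_D$ is contained in the dual lattice $\Lambda^\# = \{x \in D : \mathrm{Trd}_{D/F}(x\Lambda) \subseteq R\}$ with respect to the non-degenerate reduced trace form. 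Since $\Lambda^\#$ is finitely generated over $R$ and $R$ is Noetherian, $\calO_D$ is itself a finitely generated $R$-module, hence an $R$-order. By the integrality argument it contains every $R$-order in $D$, and is therefore the unique maximal one. The principal obstacle is the ultrametric inequality, where noncommutativity of $D$ is defused by restricting to the single complete commutative subfield $F[ab^{-1}]$.
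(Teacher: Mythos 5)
The paper's proof of this statement is simply a citation to \cite[\S12]{MaximalOrders}, so you have supplied a genuine, self-contained argument; your construction via the reduced norm and reduction to the complete commutative subfields $F[a]$ is essentially the standard method used in that source, and the orders half (integrality plus a dual-lattice bound) is also the usual route. That said, two repairs are needed.

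First, a slip in the ultrametric step: in a noncommutative ring $b(ab^{-1}+1)=bab^{-1}+b$, which is \emph{not} $a+b$. The factorization you want is $a+b=(ab^{-1}+1)\,b$, or equivalently $a+b=b\,(b^{-1}a+1)$. With either, the rest of the argument (working inside the complete commutative subfield generated by $ab^{-1}$, resp.\ by $b^{-1}a$) goes through unchanged, so this is only a typo, but as written the displayed identity is false.

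Second, and more substantively, the theorem is stated for an arbitrary finite-dimensional division algebra $D$ over $F$, not necessarily central, and this generality is actually used: the division algebras $D_i$ in Theorem~\ref{TH:structure-of-hered-rings} may have center strictly larger than $F$. Your normalization $n^2=[D:F]$ and your use of $\rdnorm[D/F]$ presuppose $\Cent(D)=F$. If $K:=\Cent(D)\supsetneq F$, then $[D:F]$ need not be a square, and --- more seriously --- if $K/F$ is inseparable the $F$-bilinear form $(x,y)\mapsto\mathrm{Trd}_{D/F}(xy)$ can be identically zero, so the dual lattice $\Lambda^{\#}$ you use to bound $\calO_D$ is not a full lattice and the finite-generation argument collapses. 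The fix is routine but should be said: first pass to $K$. Since $R$ is a complete DVR, $\nu$ extends uniquely to $K$, and the integral closure $\calO_K$ of $R$ in $K$ is a complete DVR which is finitely generated as an $R$-module (this holds even for inseparable $K/F$ because $R$ is complete). Now $D$ is central over $K$ with non-degenerate reduced trace form $\mathrm{Trd}_{D/K}$, so your argument applies verbatim over $(\calO_K,K)$ and shows $\calO_D$ is finitely generated over $\calO_K$, hence over $R$, and is the unique maximal $\calO_K$-order. Your integrality observation (which does not see the center) then shows every $R$-order in $D$ is contained in $\calO_D$, so $\calO_D$ is also the unique maximal $R$-order. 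With these two adjustments the proof is correct.
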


    \begin{proof}
        See \cite[\S12]{MaximalOrders}.
    \end{proof}

    We denote the unique maximal right (and left) ideal of $\calO_D$ by $\frakm_D$.
    The quotient $k_D:=\calO_D/\frakm_D$ is a finite-dimensional
    division $R/\frakm$-algebra, which is not
    central in general.

\medskip

    Given a ring $A$ and ideals $(\fraka_{ij})_{i,j}$, we let
    \[
    \left[
    \begin{array}{ccc}
    (\fraka_{11}) & \dots & (\fraka_{1r}) \\
    \vdots & & \vdots \\
    (\fraka_{r1}) & \dots & (\fraka_{rr})
    \end{array}
    \right]^{(n_1,\dots,n_r)}
    \]
    denote the set of block matrices $(X_{ij})_{1\leq i,j\leq r}$
    for which $X_{ij}$ is an $n_i\times n_j$ matrix with entries in $\fraka_{ij}$.
    If $D$ is a division $F$-algebra and $(n_1,\dots,n_r)$ are natural numbers, let
    \[
    \calO_D^{[n_1,\dots,n_r]}=
    \left[
    \begin{array}{cccc}
    (\calO_D) & (\frakm_D) & \dots & (\frakm_D) \\
    \vdots & (\calO_D) & \ddots & \vdots \\
    \vdots & & \ddots & (\frakm_D) \\
    (\calO_D) &\dots  & \dots & (\calO_D)
    \end{array}
    \right]^{(n_1,\dots,n_r)} .
    \]

    \begin{thm}\label{TH:structure-of-hered-rings}
        Let $A$ be a hereditary $R$-order.
        Then there are  division $F$-algebras $\{D_i\}_{i=1}^t$ and integer
        tuples
        $\{\hat{n}^{(i)}=(n_1^{(i)},\dots,n_{r_i}^{(i)})\}_{i=1}^t$
        such that
        \[
        A\cong \prod_{i=1}^t \calO_{D_i}^{[\hat{n}^{(i)}]}\ .
        \]
        Conversely, every $A$ of this form is hereditary.
    \end{thm}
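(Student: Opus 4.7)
The plan is to follow the classical classification of hereditary orders over a complete DVR: first reduce to the case where $A_F$ is simple via a central decomposition, then invoke the standard bijection between hereditary $R$-orders in $M_n(D)$ and lattice chains in $D^n$.

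\emph{Reduction to $A_F = M_n(D)$.} Write $A_F = \prod_{i=1}^t E_i$ with each $E_i$ simple, and set $F_i := \Cent(E_i)$. Each $F_i$ is a finite extension of $F$, complete under the unique valuation extending $\nu$ (Theorem~\ref{TH:structure-of-max-orders} applied to the commutative division algebra $F_i$), with ring of integers $\calO_i$. The integral closure of $R$ in $\Cent(A_F)$ is $\prod_i \calO_i$, and by the theorem preceding Theorem~\ref{TH:separable-maximal-hereditary} it is contained in $A$. Its primitive idempotents $e_1,\ldots,e_t$ therefore lie in $\Cent(A)$, yielding a decomposition $A = \prod_i e_i A$ with each $e_i A$ a hereditary $R$-order in $E_i$. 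Choosing division $F$-algebras $D_i$ so that $E_i \cong M_{n_i}(D_i)$, we may henceforth assume $A_F = M_n(D)$.

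\emph{Forward direction.} The Jacobson radical $J := \Jac(A)$ is a projective right $A$-module since $A$ is hereditary; since $R$ is complete and $J^k \subseteq \frakm A \subseteq J$ for some $k$ by \cite[Th.~6.15]{MaximalOrders}, $A$ is semiperfect and $A/J$ is semisimple. Lifting primitive orthogonal idempotents gives a decomposition $A_A = P_1^{n_1} \oplus \cdots \oplus P_r^{n_r}$ into pairwise non-isomorphic indecomposable projective right $A$-modules. Each $P_i$ is a full $A$-lattice in the unique simple right $A_F$-module $V = D^n$; after conjugating $A$ into a maximal $R$-order $M_n(\calO_D) \supseteq A$ and rescaling the $P_i$ by elements of $\units{D}$, one realizes them as a descending chain of $\calO_D$-lattices
\[
P_1 \supsetneq P_2 \supsetneq \cdots \supsetneq P_r \supsetneq P_1 \frakm_D
\]
in $V$ with simple successive quotients. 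A direct computation of $\Hom_A(P_i, P_j)$ as a sublattice of $\End_D(V)$ gives $\calO_D$ or $\frakm_D$ according to whether $j \leq i$ or $j > i$, and the identification $A^{\op} \cong \End_A\bigl(\bigoplus_i P_i^{n_i}\bigr)$ realizes $A$ as $\calO_D^{[n_1,\ldots,n_r]}$.

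\emph{Converse and main obstacle.} For the converse, use the characterization \cite[p.~5]{AusGold60B} that an $R$-order is hereditary iff its Jacobson radical is projective as a right module: a direct inspection shows that $\Jac(\calO_D^{[\hat n]})$ is the block matrix with $\frakm_D$-blocks on and above the diagonal, and that this ideal decomposes as a direct sum of submodules of the indecomposable projective summands $e_{ii} A$, each projective by an explicit generator argument. The main obstacle is the chain-construction step in the forward direction: arranging the $P_i$ into a descending chain of $\calO_D$-lattices of length exactly $r$ whose bottom is $P_1\frakm_D$ requires not just the semiperfect structure but the full projectivity of $J$ — it is this step where the hereditary hypothesis does the essential work beyond what semiperfectness alone would provide.
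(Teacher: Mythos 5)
Your proposal follows the classical line of argument from Reiner's \emph{Maximal Orders}, which is precisely the reference the paper cites: the paper's proof of this theorem is simply a pointer to \cite[Th.~39.14]{MaximalOrders} for the case $A_F$ central simple, together with \cite[Th.~1.7.1]{HijNish94} to reduce the general semisimple case. Your reduction step is essentially what that second citation accomplishes: since the integral closure $Z$ of $R$ in $\Cent(A_F)$ is finitely generated and sits inside any hereditary order (by the Hijikata--Nishida theorem quoted just before Theorem~\ref{TH:separable-maximal-hereditary}), its primitive central idempotents decompose $A$ as a product of hereditary orders in the simple factors of $A_F$. Making that explicit, as you do, is a reasonable and correct way to carry out the reduction, and it is in the same spirit as the paper.

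For the simple case, your sketch is structurally the right one, and you are honest in flagging the genuine obstacle: passing from the semiperfect decomposition $A_A = \bigoplus_i P_i^{n_i}$ to a cyclically ordered chain $P_1 \supsetneq \cdots \supsetneq P_r \supsetneq P_1 \frakm_D$ with simple successive quotients is exactly where the hereditary hypothesis (equivalently, projectivity of $J$) is used in an essential way, and your sketch does not fill it. Two additional points deserve care if you wanted to complete this. First, the claim that each indecomposable projective $P_i$ is a \emph{full} lattice in the unique simple $A_F$-module $V$ (i.e.\ that $(P_i)_F \cong V$ rather than $V^{m_i}$ for some $m_i > 1$) is not automatic from indecomposability over $A$; it again relies on the hereditary structure. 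Second, the converse direction does follow from the projectivity criterion for $J$, but it is worth noting that the block-matrix description of $\Jac(\calO_D^{[\hat n]})$ you give has $\frakm_D$-blocks on the diagonal and strictly \emph{above} (in the strict upper triangle), with $\calO_D$-blocks below, matching the display in \ref{subsection:projectives}; a casual reader might misparse ``on and above the diagonal.'' In summary: your approach is the same as the one behind the reference the paper cites, the reduction step is correct, and the gap you name is exactly the hard content of Reiner's Theorem~39.14.
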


    \begin{proof}
        See \cite[Th.~39.14]{MaximalOrders} for the case where $A_F$  is a central simple
        $F$-algebra. The general case follows by using \cite[Th.~1.7.1]{HijNish94}, for instance.
    \end{proof}

\subsection{Projective Modules over Hereditary Orders}
\label{subsection:projectives}

    \emph{Keep the assumption that $R$ is a complete DVR.}
    We now collect several facts about projective modules over hereditary $R$-orders.

    \medskip

    We start with the following general lemma.

    \begin{lem}\label{LM:mod-jacob-radical}
        Let $A$ be a ring and let $P,Q\in\rproj{A}$.
        Write $\quo{P}=P/P\Jac(A)$ and $\quo{Q}=Q/Q\Jac(A)$.
        Then $P\cong Q$ if and only if $\quo{P}\cong \quo{Q}$ (as modules
        over $A$ or $\quo{A}=A/\Jac(A)$).
    \end{lem}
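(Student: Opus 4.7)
The forward implication is immediate: any $A$-isomorphism $\phi\colon P\to Q$ descends to an isomorphism $\quo{\phi}\colon\quo{P}\to\quo{Q}$, and the parenthetical ambiguity is harmless because an $A$-linear map between modules annihilated by $\Jac(A)$ is the same datum as a $\quo{A}$-linear map.

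For the converse, my plan is to lift a given isomorphism $\quo{\psi}\colon\quo{P}\to\quo{Q}$ to an $A$-linear map $\psi\colon P\to Q$ and verify it is an isomorphism using only projectivity and Nakayama's lemma. First, by projectivity of $P$, the canonical map $\Hom_A(P,Q)\to \Hom_A(P,\quo{Q})=\Hom_{\quo{A}}(\quo{P},\quo{Q})$ is surjective, so $\quo{\psi}$ lifts to some $\psi\colon P\to Q$; symmetrically, projectivity of $Q$ yields a lift $\phi\colon Q\to P$ of $\quo{\psi}^{-1}$. By construction, $\phi\psi-\id_P$ reduces to zero modulo $\Jac(A)$, so its image lies in $P\Jac(A)$, whence $P=\phi\psi(P)+P\Jac(A)$. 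As $P$ is finitely generated, so is $P/\phi\psi(P)$, and Nakayama's lemma forces $\phi\psi(P)=P$; in particular $\phi$ is surjective. The symmetric argument applied to $\psi\phi$ shows that $\psi$ is surjective.

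To conclude injectivity of $\psi$, I would apply projectivity of $Q$ a second time: the surjection $\psi\colon P\to Q$ splits, so $P=K\oplus P'$ with $K=\ker\psi$ and $\psi|_{P'}$ an isomorphism. Passing to $\quo{P}=\quo{K}\oplus\quo{P'}$ and using that $\quo{\psi}$ is an isomorphism forces $\quo{K}=0$, i.e., $K\subseteq P\Jac(A)$. Because $K$ is a direct summand of $P$, this refines to $K=K\Jac(A)$, and a final application of Nakayama to the finitely generated module $K$ yields $K=0$. The only care required throughout is to check that Nakayama is being applied to a finitely generated module at each step; I do not anticipate a substantive obstacle beyond this bookkeeping.
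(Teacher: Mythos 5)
Your proof is correct, and it takes essentially the same route as the paper: the paper observes that $P$ (resp.\ $Q$) is a projective cover of $\quo{P}$ (resp.\ $\quo{Q}$) and then invokes uniqueness of projective covers as a black box, whereas your argument simply unpacks that uniqueness statement into its two standard ingredients — lifting along projectivity, and Nakayama applied first to see surjectivity of the lifted maps and then (after splitting off the kernel by projectivity of the target) to kill the kernel. Every Nakayama application you make is legitimate since $P$, $Q$, and the split-off kernel $K$ are all finitely generated.
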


    \begin{proof}
        Using Nakayama's Lemma,
        it is easy to check that $P$ is a \emph{projective cover}
        of $\quo{P}$, and likewise, $Q$ is a projective cover of $\quo{Q}$.
        The lemma follows since projective covers are unique up to isomorphism.
    \end{proof}

    Let $D$ be a finite dimensional division $F$-algebra, let $\hat{m}=(m_1,\dots,m_r)$
    and let $A=\calO_D^{[\hat{m}]}$. It is easy to see
    that
    \[
    \Jac(\calO_D^{^{[\hat{m}]}})=
    \left[
    \begin{smallmatrix}
    (\frakm_D) & (\frakm_D) & {\text{\normalsize$\cdots$}} & (\frakm_D) \\[-5pt]
    (\calO_D) & (\frakm_D) & \ddots & \vdots \\[-5pt]
    \vdots & \ddots & \ddots & (\frakm_D) \\
    (\calO_D) & {\text{\normalsize$\cdots$}}  & (\calO_D) & (\frakm_D)
    \end{smallmatrix}
    \right]^{(m_1,\dots,m_r)}
    \]
    and hence $\quo{A}:=A/\Jac(A)\cong \nMat{k_D}{m_1}\times\dots\nMat{k_D}{m_r}$, where $k_D=\calO_D/\frakm_D$.
    For all $1\leq i\leq r$, write $\ell_i=m_1+\dots+{m_{i-1}}+1$,
    and let $e_i\in A$ denote the idempotent matrix with $1$ in the $(\ell_i,\ell_i)$-entry and $0$ in all
    other entries.
    Then $V_i:=e_iA$ is a projective right $A$-module such that $\quo{V_i}=\quo{e_iA}$ (notation as in Lemma~\ref{LM:mod-jacob-radical})
    is a simple $\quo{A}$-module. It is convenient to view  $V_i$ as the $\ell_i$-th row in the matrix
    presentation of $\calO_D^{[\hat{m}]}$, that is
    \[V_i= [\underbrace{\calO_D~\dots~\calO_D}_{m_1+\dots+m_{i}}~\underbrace{\frakm_D\dots \frakm_D}_{m_{i+1}+\dots+m_r}]\ ,\]
    where the action of $\calO_D^{[\hat{m}]}$ is given by matrix multiplication on the right.
    One easily checks that $\quo{V_1},\dots,\quo{V_r}$ is a complete list of simple $\quo{A}$-modules, up to isomorphism.
    Since any finitely generated $\quo{A}$-module $M$ is isomorphic to $\bigoplus_{i=1}^r \quo{V_i}^{n_i}$
    with $n_1,\dots,n_r\geq 0$ uniquely determined, Lemma \ref{LM:mod-jacob-radical} implies:

    \begin{prp}\label{PR:projective-decomp}
        In the previous setting, for every $P\in\rproj{\calO_D^{[\hat{m}]}}$,
        there are unique $n_1,\dots,n_r\geq 0$ such that $P\cong \bigoplus_{i=1}^rV_i^{n_i}$.
    \end{prp}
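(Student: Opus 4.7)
The plan is essentially to reduce the problem to the semisimple quotient $\quo{A}=A/\Jac(A)$ and invoke Lemma~\ref{LM:mod-jacob-radical}. All the necessary ingredients are already in place in the paragraph preceding the proposition, so the proof amounts to assembling them.

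First, for existence, I would take any $P\in\rproj{\calO_D^{[\hat{m}]}}$ and consider $\quo{P}=P/P\Jac(A)$. Since $P$ is finitely generated, so is $\quo{P}$, and $\quo{P}$ is a module over the semisimple ring $\quo{A}\cong \nMat{k_D}{m_1}\times\dots\times\nMat{k_D}{m_r}$. By Artin--Wedderburn (or equivalently, by the fact that finitely generated modules over a semisimple ring decompose uniquely into simple summands), we have $\quo{P}\cong\bigoplus_{i=1}^r\quo{V_i}^{n_i}$ for some $n_1,\dots,n_r\geq 0$, because $\quo{V_1},\dots,\quo{V_r}$ is a complete list of simple $\quo{A}$-modules up to isomorphism. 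Setting $Q=\bigoplus_{i=1}^rV_i^{n_i}\in\rproj{A}$, one has $\quo{Q}\cong\bigoplus_{i=1}^r\quo{V_i}^{n_i}\cong \quo{P}$, so Lemma~\ref{LM:mod-jacob-radical} yields $P\cong Q$.

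For uniqueness, suppose $\bigoplus_{i=1}^rV_i^{n_i}\cong \bigoplus_{i=1}^rV_i^{n'_i}$. Reducing modulo $\Jac(A)$ gives $\bigoplus_{i=1}^r\quo{V_i}^{n_i}\cong \bigoplus_{i=1}^r\quo{V_i}^{n'_i}$ as $\quo{A}$-modules, and then the uniqueness of the decomposition of a finitely generated semisimple module into its isotypic components forces $n_i=n'_i$ for every $i$.

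I do not expect any genuine obstacle here: the crux of the proposition was already handled in the discussion before its statement, namely that $\quo{V_1},\dots,\quo{V_r}$ exhaust the simple $\quo{A}$-modules and that $\quo{V_i}\cong \quo{e_iA}$ lifts to $V_i=e_iA\in\rproj{A}$. The proof is just a direct application of Lemma~\ref{LM:mod-jacob-radical} plus the Artin--Wedderburn classification.
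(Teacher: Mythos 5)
Your proof is correct and follows exactly the same route as the paper: the paper derives the proposition directly from the paragraph preceding it (that $\quo{V_1},\dots,\quo{V_r}$ exhaust the simple $\quo{A}$-modules, so any finitely generated $\quo{A}$-module decomposes uniquely as $\bigoplus_i\quo{V_i}^{n_i}$) combined with Lemma~\ref{LM:mod-jacob-radical}. Nothing to add.
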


	\begin{remark}
		Proposition~\ref{PR:projective-decomp} can also be deduced by
		noting that  $A=\calO_D^{[\hat{m}]}$ is \emph{semiperfect}, a condition
		which implies
    	unique factorization of finitely generated projective $A$-modules;
    	see \cite[Th.~2.8.40, Pr.~2.9.21]{Ro88}.
	\end{remark}

\medskip

    Let $i,j\in\{1,\dots,r\}$. It is easy to see that $\Hom_A(V_i,V_j)=\Hom_A(e_iA,e_jA)\cong e_jAe_i$,
    where $e_jAe_i$ acts on $V_i=e_iA$ via multiplication on the left. Thus,
    \[
        \Hom_A(V_i,V_j)\cong e_j\calO_D^{[\hat{m}]}e_i\cong\left\{
        \begin{array}{ll}
        \calO_D & i\leq j \\
        \frakm_D & i>j
        \end{array}
        \right.\,.
    \]
    We therefore  identify $\Hom(V_j,V_i)$ with $\calO_D$ or $\frakm_D$.
    Notice that this identification turns composition into multiplication in $\calO_D$.

\remove{
    \begin{prp}\label{PR:serial}
        Let $A$ be a hereditary $R$-order and let $M$ be a simple
        $A_F$-module. Then the full $A$-lattices in $M$ form a chain with respect to inclusion.
    \end{prp}

    \begin{proof}
        Since for any two full $A$-lattices $L\subseteq L'$ in $M$, we have
        $\mathrm{length}(L'/L)<\infty$, it is enough to prove that any full $A$-lattice $L$
        in $M$ contains a unique maximal $A$-submodule.

        By Theorem~\ref{TH:structure-of-hered-rings}, we may assume $A=\calO_D^{[\hat{m}]}$
        as above. Let $L$ be a full $A$-lattice in $M$.
        Then $L\in\rproj{A}$ by Proposition~\ref{PR:lattices} and $L$ is indecomposable since $M$ is.
        Therefore, $L\cong V_i$ for some $1\leq i\leq r$.
        By Nakayama's Lemma, any maximal submodule of $V_i$ contains $V_i\Jac(A)$.
        However,  $\quo{V_i}=V_i/V_i\Jac(A)$ is simple for all $i$, so $L\cong V_i$
        contains a unique maximal $A$-submodule.
    \end{proof}
}

\subsection{Semilocal Rings}

    We finish this section by recording some useful facts
    about semilocal rings.

    \begin{prp}\label{PR:finite-alg-over-semilocal}
    	Let $R$ be a commutative semilocal ring. Then any $R$-algebra
    	$A$ that is finitely generated as an $R$-module is semilocal
    	and satisfies $A\Jac(R)\subseteq \Jac(A)$.
    \end{prp}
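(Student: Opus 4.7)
The plan is to first establish the inclusion $A\Jac(R)\subseteq\Jac(A)$ by showing that $\Jac(R)$ annihilates every simple left $A$-module, and then exploit this to reduce the semilocality of $A$ to the (automatic) semilocality of a finite-length quotient.

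For the inclusion, I would pick an arbitrary simple left $A$-module $M$. Since $M$ is cyclic over $A$ and $A$ is finitely generated as an $R$-module, $M$ is finitely generated as an $R$-module. The image of $R$ lies in the center of $A$, so $\Jac(R)\cdot M$ is an $A$-submodule of $M$; Nakayama's lemma (for finitely generated modules over the commutative semilocal ring $R$) gives $\Jac(R)\cdot M\ne M$, and simplicity forces $\Jac(R)\cdot M=0$. Since $\Jac(A)$ is the intersection of the annihilators of all simple left $A$-modules, this yields $\Jac(R)\subseteq\Jac(A)$, and because $\Jac(A)$ is a two-sided ideal of $A$, $A\Jac(R)\subseteq\Jac(A)$.

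For the semilocality, write $\bar R=R/\Jac(R)$ and $\bar A=A/A\Jac(R)$. Since $R$ is commutative semilocal, $\bar R$ is a finite product of fields, hence semisimple Artinian; the quotient $\bar A$ is finitely generated as a $\bar R$-module, so it is Artinian as a $\bar R$-module, and therefore Artinian as a ring. Every Artinian ring is semilocal, so $\bar A/\Jac(\bar A)$ is semisimple Artinian. The previously established inclusion $A\Jac(R)\subseteq\Jac(A)$, combined with the standard identity $\Jac(\bar A)=\Jac(A)/A\Jac(R)$ (valid whenever one quotients by a two-sided ideal contained in the Jacobson radical), gives $A/\Jac(A)\cong\bar A/\Jac(\bar A)$, which is semisimple Artinian. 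Hence $A$ is semilocal.

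I anticipate no genuine obstacle; the only slightly subtle point is noticing that the centrality of (the image of) $R$ in $A$ is exactly what makes $\Jac(R)\cdot M$ an $A$-submodule, allowing simplicity to amplify Nakayama from ``$\ne M$'' to ``$=0$''. Once this step is in place, the semilocality assertion is just a routine descent through the Artinian ring $\bar A$.
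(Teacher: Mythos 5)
Your proof is correct, and its overall structure --- first establish $A\Jac(R)\subseteq\Jac(A)$ via Nakayama's lemma, then pass to the artinian quotient $A/A\Jac(R)$ to conclude that $A/\Jac(A)$ is semisimple --- matches the paper's. The only difference is in how the inclusion is obtained: the paper applies Nakayama to the cyclic right $A$-module $A/(1+x)A$ to show that every element of $1+A\Jac(R)$ is right invertible, whereas you apply Nakayama to an arbitrary simple left $A$-module $M$ (using centrality of $R$ to make $\Jac(R)M$ an $A$-submodule) to show that $\Jac(R)$ annihilates $M$; these are equivalent standard applications of Nakayama and neither is simpler than the other.
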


	\begin{proof}
		Let $J=\Jac(R)$. For all $a\in A$ and $r\in J$, we have
		$(1+ar)A+AJ=A$, so by Nakayama's Lemma, $(1+ar)A=A$. This implies
		that $1+AJ$ consists of right invertible elements, hence $AJ\subseteq \Jac(A)$. Next, $A/AJ$ is f.g.\ as an $R/J$-module,
		hence it is artinian. This means that
        $A/\Jac(A)$ is semisimple  (\cite[Th.~4.14]{Lam01}), so $A$ is semilocal.
	\end{proof}

    \begin{prp}\label{PR:etale-extsions-of-modules}
        Let $R$ be a commutative semilocal ring, let $A$ be an $R$-algebra that
        is finitely generated as an $R$-module, let $S$ be a faithfully flat commutative $R$-algebra,
        and let $P,Q\in\rproj{A}$. Then $P\cong Q$ if and only
        if $P_S\cong Q_S$ (as $A_S$-modules).
    \end{prp}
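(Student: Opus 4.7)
The $(\Rightarrow)$ direction is immediate. For $(\Leftarrow)$, the strategy is to pass to the semisimple quotient $\bar{A} := A/\Jac(A)$, split along the field factors of $\bar{R} := R/\Jac(R)$, and then read off multiplicities of simples via $\Hom$ and Schur's Lemma.

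By Proposition~\ref{PR:finite-alg-over-semilocal}, $A$ is semilocal and $A\Jac(R) \subseteq \Jac(A)$, so $\bar{R}$ embeds into $\Cent(\bar{A})$. Lemma~\ref{LM:mod-jacob-radical} reduces $P \cong Q$ to $\bar{P} \cong \bar{Q}$ as $\bar{A}$-modules, where $\bar{P} := P/P\Jac(A)$ and $\bar{Q} := Q/Q\Jac(A)$ are semisimple by Wedderburn. Tensoring the given isomorphism $P_S \cong Q_S$ with $A_S \to \bar{A} \otimes_R S$ yields $\bar{P} \otimes_R S \cong \bar{Q} \otimes_R S$ as $(\bar{A} \otimes_R S)$-modules. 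Since $R$ is semilocal, $\bar{R} = \prod_\alpha k_\alpha$ is a finite product of fields; the primitive idempotents $e_\alpha$ decompose $\bar{A} = \prod \bar{A} e_\alpha$ (each factor a finite-dimensional semisimple $k_\alpha$-algebra), together with the corresponding decompositions of $\bar{P}$, $\bar{Q}$, and $S \otimes_R \bar{R} = \prod S_\alpha$, where each $S_\alpha$ is a nonzero $k_\alpha$-algebra by faithful flatness of $S \otimes_R \bar{R}$ over $\bar{R}$. Collecting $e_\alpha$-components reduces the task to the following field case: \emph{given a finite-dimensional semisimple $k$-algebra $B$, finitely generated $B$-modules $M, N$, a nonzero $k$-algebra $T$, and an isomorphism $M \otimes_k T \cong N \otimes_k T$ of $B_T$-modules, show $M \cong N$.}

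Write $M \cong \bigoplus_i V_i^{a_i}$ and $N \cong \bigoplus_i V_i^{b_i}$ via the simple $B$-modules $V_i$, and set $D_i := \End_B(V_i)$. Since $B$ is semisimple, each $V_i$ is finitely generated projective over $B$, so $\Hom_B(V_i, -)$ commutes with the base change $(-)\otimes_k T$; combined with Schur's Lemma this yields free $T$-modules
\[
\Hom_{B_T}\bigl((V_i)_T, M_T\bigr) \cong D_i^{a_i} \otimes_k T, \qquad \Hom_{B_T}\bigl((V_i)_T, N_T\bigr) \cong D_i^{b_i} \otimes_k T,
\]
of ranks $a_i \dim_k D_i$ and $b_i \dim_k D_i$ respectively. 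Because $T \neq 0$, passing to a residue field of $T$ shows that the rank of a free $T$-module is well-defined, forcing $a_i = b_i$. Thus $M \cong N$, and assembling the pieces gives $\bar{P} \cong \bar{Q}$, hence $P \cong Q$. The main subtlety is that in general $\Jac(A_S) \neq \Jac(A) \otimes_R S$; tensoring only with the semisimple quotient $\bar{A} = A/\Jac(A)$ over $R$ bypasses this issue and keeps the reduction unambiguous.
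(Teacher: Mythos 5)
Your proof is correct, and it follows essentially the same strategy as the paper's: pass to Jacobson radical quotients (via Proposition~\ref{PR:finite-alg-over-semilocal} and Lemma~\ref{LM:mod-jacob-radical}), split along the field factors of $R/\Jac(R)$, and detect the module over each field factor using faithful flatness. The one genuine difference is in the last step: the paper passes to a field quotient $L$ of $S_{K_i}$ and then cites an external lemma (\cite[Lm.~5.21]{BayFiMol13}) for the fact that finitely generated modules over a finite-dimensional algebra over a field are determined by base change to a nonzero commutative algebra; you instead prove this directly by computing $\Hom_{B_T}((V_i)_T, M_T) \cong D_i^{a_i}\otimes_k T$ (using that $\Hom$ out of a finitely generated projective commutes with flat base change) and comparing free ranks over $T$. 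Your version is thus self-contained where the paper's outsources the field case, at the cost of a slightly longer argument. One small imprecision: the map $\bar R \to \Cent(\bar A)$ need not be injective (e.g.\ if $R\to A$ has a kernel), but this is harmless --- you only need the idempotents $e_\alpha$ to map to central (possibly zero) orthogonal idempotents summing to $1$, which they do.
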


    \begin{proof}
		%
        Assume $P_S\cong Q_S$. Let $J=\Jac(R)$ and write $R/J$ as  a product of fields $\prod_{i=1}^tK_i$.
        We claim that $P_{K_i}\cong Q_{K_i}$ for all $1\leq i\leq t$.
        Indeed, $(P_S)\otimes_R K_i\cong P\otimes_R K_i\otimes_R S\cong P_{K_i}\otimes_{K_i} S_{K_i}$
        (as $S_{K_i}$-modules), and likewise $(Q_S)\otimes_R K_i\cong Q_{K_i}\otimes_{K_i} S_{K_i}$,
        so $P_{K_i}\otimes_{K_i} S_{K_i}\cong Q_{K_i}\otimes_{K_i} S_{K_i}$.
        Since $S/R$ is faithfully flat, $S_{K_i}\neq 0$, and hence there is a field
        $L$ admitting a nonzero morphism $S_{K_i}\to L$.
        Now, $P_{K_i}\otimes_{K_i}{L}\cong Q_{K_i}\otimes_{K_i} L$,
        so by \cite[Lm.~5.21]{BayFiMol13} (for instance), we get $P_{K_i}\cong Q_{K_i}$, as
        claimed.
        Finally, we have $P/PJ\cong\prod_iP_{K_i}\cong \prod_iQ_{K_i}\cong Q/QJ$.
        Since   $AJ\subseteq\Jac(A)$ (Proposition~\ref{PR:finite-alg-over-semilocal}),
        this means
        $P\cong Q$, by Lemma~\ref{LM:mod-jacob-radical}.
    \end{proof}

\section{Unimodular Hermitian Forms over Hereditary Orders}
\label{section:hermitian}

    Throughout, $R$ is a semilocal principal ideal domain (abbrev.: PID) with fraction field $F$.
    \emph{We  assume that $2\in\units{R}$.}
    The goal of this section is to prove:

    \begin{thm}\label{TH:forms-over-hereditary-has-genus-one}
        Let $A$ be a hereditary $R$-order,
        let $\sigma:A\to A$ be an $R$-involution, and let $u\in\Cent(A)$
        be an element with $u^\sigma u=1$.
        Let $P\in\rproj{A}$ and let $f,f':P\times P\to A$ be two unimodular $u$-hermitian forms
        over $(A,\sigma)$.
        Then $(P_F,f_F)\cong (P_F,f'_F)$ implies $(P,f)\cong (P,f')$.
    \end{thm}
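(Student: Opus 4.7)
The plan is to transfer the problem to the endomorphism ring $E := \End_A(P)$ via Proposition~\ref{PR:transfer-into-edomorphism-ring}, reduce to the complete local setting by patching, and then resolve the complete DVR case using the structure theory from Section~\ref{subsection:structure-of-hered-orders}.

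Let $\tau$ be the involution on $E$ adjoint to $f$. By Proposition~\ref{PR:transfer-into-edomorphism-ring} together with Remark~\ref{RM:transfer-is-OK-with-scalar-ext}, the claim becomes the injectivity of $\mathrm{H}(E,\tau)\to\mathrm{H}(E_F,\tau_F)$: given $a,a'\in\units{\Sym}(E,\tau)$ with $a\sim_{\tau_F}a'$ in $E_F$, conclude $a\sim_\tau a'$ in $E$. Note that $E$ is itself an $R$-order (since $A\in\rproj{R}$ and $P\in\rproj{A}$), and Proposition~\ref{PR:projective-decomp} combined with the computation of $\Hom_A(V_i,V_j)$ at the end of Section~\ref{subsection:projectives} shows that each completion $E\otimes_R\hat R_\frakp$ is a tiled order as in Theorem~\ref{TH:structure-of-hered-rings}, so $E$ is hereditary by Theorem~\ref{TH:local-hereditary}. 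Invoking the patching results of~\cite{BayFi14}, an element of $\mathrm{H}(E,\tau)$ is recovered from its images in $\mathrm{H}(E_F,\tau_F)$ and in each $\mathrm{H}(\hat E_\frakp,\hat\tau_\frakp)$; this reduces the desired injectivity to the case where $R$ is a complete DVR.

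In that remaining case, Theorem~\ref{TH:structure-of-hered-rings} lets us write $A\cong\prod_i\calO_{D_i}^{[\hat n^{(i)}]}$ and Proposition~\ref{PR:projective-decomp} decomposes $P\cong\bigoplus_j V_j^{m_j}$; this exhibits $E$ as a product of tiled matrix orders over the maximal orders $\calO_{D_i}$, with $\Jac(E)$ given explicitly (Section~\ref{subsection:projectives}). The quotient $\quo{E}:=E/\Jac(E)$ is then semisimple---a product of matrix rings over the residue division algebras $k_{D_i}$---and $\tau$ descends to an involution $\quo{\tau}$ on $\quo{E}$. The plan is to reduce mod $\Jac(E)$, apply Witt cancellation over $\quo{E}$ (Theorem~\ref{TH:witt-cancellation}) to compare $\quo{a}$ and $\quo{a'}$, and lift using Hensel's lemma applied to the smooth $R$-group scheme $\uU(E,\tau)\cong\uU(f)$, smoothness coming from unimodularity of $f$ (Section~\ref{subsection:hermitian-forms}).

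The main obstacle is bridging the $\tau_F$-congruence, which only supplies $v\in\units{E_F}$, with the mod-$\Jac(E)$ data needed to invoke Witt cancellation. Because $E$ is hereditary, one can attempt to modify $v$ by a right factor $w\in\units{E}$ so that $vw$ lands in $\units{E}$, promoting the $F$-congruence to an honest $R$-congruence; carrying this out requires a careful analysis of how $\tau$ permutes the block idempotents $\{e_i\}$ of $E$. When $\tau$ fixes each $e_i$, the form decomposes orthogonally and one may induct on the number of blocks (using Remark~\ref{RM:summand-involution}); when $\tau$ swaps blocks in pairs, the resulting summands are hyperbolic and are classified purely by the underlying projective module, as recorded in Section~\ref{subsection:hermitian-forms}. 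Executing this lattice-theoretic modification in a way compatible with $\tau$ is the technical heart of the argument.
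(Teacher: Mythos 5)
Your opening moves --- transfer to $E=\End_A(P)$ via Proposition~\ref{PR:transfer-into-edomorphism-ring}, reduction to the complete DVR case by the patching of \cite{BayFi14}, and identification of $E$ as a tiled order $\calO_D^{[\hat n]}$ --- match the paper's Reductions~1, 2 and~6. The gap is in the final paragraph, and it is real. You propose to reduce modulo $\Jac(E)$, apply Witt cancellation over $\quo E$, and lift; but the given datum is an isometry over $E_F$, i.e.\ an element $v\in\units{E_F}$ with $a=v^{\tau_F}a'v$, and such a $v$ need not lie in $E$, so $\quo v$ is undefined and there is no mod-$\Jac(E)$ statement to cancel and lift. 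Your proposed bridge --- ``modify $v$ by a right factor $w\in\units E$ so that $vw$ lands in $\units E$'' --- is logically empty: if $w\in\units E$ then $vw\in\units E$ if and only if $v\in\units E$ already. (What could help is modifying $v$ by an element of the unitary group $U(f'_F)$ over $E_F$, but then one must prove such a modification exists, which is essentially the original problem.)

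What the paper actually does at this stage is different in an essential way that your plan omits: before touching the valuation, it first reduces to the case where the \emph{residual} form $\quo f$ over $\quo A$ is anisotropic. This is Reductions~4 and~5, where Witt cancellation over $F$ (not over $\quo E$) strips off the summands coming from exchange-involution and alternating-type components and factors out residual hyperbolic planes; those summands are determined up to isometry by their underlying module, so their rational isometry is automatic. After these reductions, hypothesis~(iii) of Reduction~6 holds: $w^{\quo\tau}w\neq 0$ for all $0\neq w\in\quo E$. That anisotropy of $\quo\tau$ is the engine of the rest of the argument --- it forces $s\le 2$ (Claim~8), and in the cases $s=1,2$ it drives a direct valuation estimate showing that any $x\in E_F$ with $x^{\tau_F}x=a\in\units{\Sym}(E,\tau)$ must already lie in $E$, so no modification of $x$ is needed at all. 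Without the anisotropy reduction the valuation argument fails (e.g.\ for exchange or alternating-type blocks the conclusion is trivially true but invisible to the method), and your ``Witt over $\quo E$ plus Hensel'' route has nothing to feed into Witt cancellation. Your vague plan to ``induct on the number of blocks'' when $\tau$ fixes the $e_i$ also does not obviously terminate; the paper shows that after the anisotropy reduction there are at most two blocks, which is a sharper and nontrivial fact.
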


\subsection{Hermitian Forms over Orders}

\label{subsection:herm-over-ord}

    As a preparation for the proof, we first recall the structure theory of hermitian forms over $R$-orders
    when \emph{$R$ is a complete DVR with $2\in\units{R}$}.
    This is a specialization of the  general theory in \cite[\S2--3]{QuSchSch79}.

    Throughout, $A$ denotes an $R$-order,
    $\sigma:A\to A$ is an $R$-involution, and $u\in\Cent(A)$ is an element
    satisfying $u^\sigma u=1$.
    Whenever it makes sense, an overline denotes reduction modulo $\Jac(A)$,
    e.g.\ $\quo{A}=A/\Jac(A)$ and $\quo{P}=P/P\Jac(A)$ for all $P\in\rproj{A}$.

\medskip

    Every $u$-hermitian space
    $(P,f)\in\Herm[u]{A,\sigma}$ gives rise to a $\quo{u}$-hermitian
    space $(\quo{P},\quo{f})\in\Herm[\quo{u}]{\quo{A},\quo{\sigma}}$
    defined by $\quo{f}(\quo{x},\quo{y})=\quo{f(x,y)}$.
    Since $R$ is a complete DVR, every finite $R$-algebra $E$ is semilocal and satisfies
    $E=\invlim \{E/\Jac(E)^n\}_{n\in\N}$ (see for instance \cite[p.~85]{MaximalOrders}).
    Therefore, well-known lifting arguments (\cite[Th.~2.2]{QuSchSch79}, note that $2\in\units{R}$) imply that:
    \begin{itemize}
        \item[(A)] $(\quo{P},\quo{f})\cong (\quo{P'},\quo{f'})$ if and only if
        $(P,f)\cong (P',f')$ and
        \item[(B)] every $\quo{u}$-hermitian space
        $(Q,g)\in\Herm[\quo{u}]{\quo{A},\quo{\sigma}}$ is isomorphic
        to  $(\quo{P},\quo{f})$ for some $(P,f)\in\Herm[u]{A,\sigma}$
    \end{itemize}

\medskip

	By Proposition~\ref{PR:finite-alg-over-semilocal},
    the ring $\quo{A}$ is semisimple and it is easy to see that  $\quo{\sigma}$
    permutes its simple factors. Therefore, we can
    write $(\quo{A},\quo{\sigma})=\prod_{i=1}^t (A_i,\sigma_i)$
    where for each $i$, either $A_i$ is simple artinian, or $A_i=B_i\times B_i^\op$
    and  $\sigma_i$ is the exchange involution. In the former case,
    we write $A_i=\nMat{W_i}{n_i}$ with $W_i$ a division ring.

\medskip

    We  decompose  $(\quo{P},\quo{f})$ as $\bigoplus_{i=1}^t(P_i,f_i)$ with $(P_i,f_i)\in\Herm[u_i]{A_i,\sigma_i}$
    (here, $\quo{u}=(u_i)_{i=1}^t$).
    In case $A_i=B_i\times B_i^\op$ and $\sigma_i$ is the exchange involution,
    the hermitian space $(P_i,f_i)$ is hyperbolic and  moreover determined up to isometry by the $A_i$-module $P_i$
    (see~\ref{subsection:hermitian-forms}).

    Suppose now that $A_i$ is simple.
    By \cite[Cor.~I.9.6.1]{Kn91} (for instance),
    there is an involution $\eta_i:W_i\to W_i$ and $\veps_i\in\Cent(W_i)$ with $\veps_i^{\eta_i}\veps_i=1$
    such that
    the category $\Herm[u_i]{A_i,\sigma_i}$ is equivalent to $\Herm[\veps_i]{W_i,\eta_i}$
    (this also induces an underlying equivalence between $\rproj{A_i}$ and $\rproj{W_i}$).
    Moreover, this equivalence is induced by an equivalence of the underlying \emph{hermitian categories}
    (\cite[II.3.4.2]{Kn91}), and hence preserves orthogonal sums and isotropicity.
    Here,  $(P_i,f_i)$ is istropic
    if $P_i$ has summand $N$ with $f_i(N,N)=0$; since $A_i$ is simple artinian,
    this is equivalent to the existence of $0\neq x\in P_i$ with $f_i(x,x)\neq 0$.

    Denote by $(Q_i,g_i)\in\Herm[\veps_i]{W_i,\eta_i}$ the hermitian space corresponding to $(P_i,f_i)$.
    We say that $(W_i,\eta_i,\veps_i)$ is of \emph{alternating type} if
    $W_i$ is a field, $\eta_i=\id_{W_i}$ and $\veps_i=-1$. In this case, $g_i$
    is just a nondegenerate alternating bilinear form, and hence it is hyperbolic and determined up to isomorphism by its base module
    $Q_i$
    (\cite[Th.~7.8.1]{SchQuadraticAndHermitianForms}).
    On the other hand, if $(W_i,\eta_i,\veps_i)$ is not of alternating type,
    then $g_i$ can be diagonalized (\cite[Th.~7.6.3]{SchQuadraticAndHermitianForms}).
    Furthermore, if $(Q_i,g_i)$ is isotropic, then we can  factor a hyperbolic plane
    from it (\cite[Lm.~7.7.2]{SchQuadraticAndHermitianForms}).
    (Recall that a hyperbolic plane is an isotropic unimodular $2$-dimensional
    $\veps_i$-hermitian space over $(W_i,\eta_i)$; it is always isomorphic to
    $(W_i\oplus W_i^*,\Hyp{W_i})$.)

\medskip

    We now draw some conclusions concerning the hermitian space $(P,f)$ using (A) and (B)
    above:
    The orthogonal decomposition $(\quo{P},\quo{f})=\bigoplus_{i=1}^t (P_i,f_i)$
    implies that
    we can write
    \[(P,f)=\bigoplus_{i=1}^t(P^{i},f^{i})\]
    with $(\quo{P^{i}},\quo{f^{i}})\cong (P_i,f_i)$.
    Furthermore, if $A_i=B_i\times B_i^\op$
    or $(W_i,\eta_i,\veps_i)$ is of alternating type,
    then $(P^i,f^i)$ is hyperbolic and its  isometry
    class  is determined by $P^i$.
    In all other cases, we can diagonalize $(P^{i},f^{i})$ in the sense that
    we can write $(P^{i},f^{i})=\bigoplus_{j=1}^{n_i}(V^{i},f^{i,j})$ where
    $V^{i}\in\rproj{A}$ is chosen such that $\quo{V^i}$ is a simple $A_i$-module
    ($V^i$ is uniquely determined up to isomorphism by Lemma~\ref{LM:mod-jacob-radical}).
    The induced decomposition $(\quo{P^i},\quo{f^i})=\bigoplus_j(\quo{V^{i}},\quo{f^{i,j}})$ corresponds
    to a diagonalization of $(Q_i,g_i)$.

\subsection{Proof of Theorem~\ref{TH:forms-over-hereditary-has-genus-one}}

    We  now prove Theorem~\ref{TH:forms-over-hereditary-has-genus-one}.
    The proof is done by a series of reductions to a simpler setting or an equivalent statement.
    Recall that we are given two unimodular $u$-hermitian forms $f$, $f'$ on an $A$-module $P$,
    and $A$ is a hereditary $R$-order.

\medskip

    {\noindent \it Reduction~1.} We may assume that $R$ is a complete DVR.

    \begin{proof}
    By {\cite[Th.~6.7]{BayFi14}}, $f\cong f'$
    if and only if $f_F\cong f'_F$ and
    $f_{\hat{R}_\frakp}\cong f'_{\hat{R}_\frakp}$
    for all $0\neq\frakp\in\Spec R$ (the notation is as in~\ref{subsection:orders}).
    It is therefore enough to prove that $f_{\hat{F}_\frakp}\cong f'_{\hat{F}_\frakp}$
    implies  $f_{\hat{R}_\frakp}\cong f'_{\hat{R}_\frakp}$.
    (Note that $A_{\hat{R}_\frakp}$ is hereditary  by Theorem~\ref{TH:local-hereditary}.)
    \end{proof}

    {\noindent \it Reduction~2.}
    We may assume that $A=\calO_D^{[\hat{m}]}$ (see~\ref{subsection:structure-of-hered-orders}),
    where $D$ is a f.d.\ division $F$-algebra and $\hat{m}=(m_1,\dots,m_r)$.

    \begin{proof}
    Since $R$ is a complete DVR (Reduction~1),
    Theorem~\ref{TH:structure-of-hered-rings} implies that there are f.d.\ division
    $F$-algebras $\{D_i\}_{i=1}^t$ and  integer tuples $\{\hat{n}^{(i)}\}_{i=1}^t$ such that
    $
    A\cong \prod_{i=1}^t \calO_{D_i}^{[\hat{n}^{(i)}]}
    $.
    It is easy to see that $\sigma$ permutes the components $\calO_{D_i}^{[\hat{n}^{(i)}]}$.
    Thus, $(A,\sigma)$ can be written as a product of rings with involution $\prod_{j=1}^s(E_j,\tau_j)$
    such that for each $j$, either $E_j=\calO_{D}^{[\hat{n}]}$,
    or $E_j=\calO_{D}^{[\hat{n}]}\times (\calO_{D}^{[\hat{n}]})^\op$
    and $\tau_j$ is the exchange involution.
    It enough to prove the theorem for each $(E_j,\tau_j)$ separately.
    However, when $E_j=\calO_{D}^{[\hat{n}]}\times (\calO_{D}^{[\hat{n}]})^\op$,
    all forms over $(E_j,\tau_j)$ are determined by their base module
    up to isomorphism (see~\ref{subsection:hermitian-forms}), so there is nothing to prove.
    \end{proof}

    {\it\noindent Notation~3.}
    We now apply all the notation of~\ref{subsection:herm-over-ord} to $(P,f)$
    and $(P,f')$. The objects induced by $f'$ will be written with a prime, e.g.\
    $(P_i,f'_i)$, $(Q_i,g'_i)$, etcetera.
    However, since $A/\Jac(A)\cong\nMat{k_D}{m_1}\times\dots
    \times\nMat{k_D}{m_r}$ with $k_D=\calO_D/\frakm_D$
    (see \ref{subsection:projectives}),
    we have $W_i=\calO_D/\frakm_D$ for all $i$, so we simply write
    $W$ instead of $W_i$.

    We further let $I$ be the set of indices $1\leq i\leq t$ (where $\quo{A}=\prod_{i=1}^tA_i$)
    for which  $A_i=B_i\times B_i^\op$ or $(W,\eta_i,\veps_i)$ is
    of alternating type, and let $J=\{1,\dots,t\}\setminus I$.

\medskip

    {\it \noindent Reduction~4.}
    We may assume that $P^i=0$ for all $i\in I$.

    \begin{proof}
        Write $(P^I,f^I)=\bigoplus_{i\in I}(P^i,f^i)$
        and define $(P^I,f'^I)$, $(P^J,f^J)$, $(P^J,f'^J)$ similarly.
        By~\ref{subsection:herm-over-ord}, the isometry classes of $f^I$ and $f'^I$
        are determined by $P$, so $f^I\cong f'^I$, and hence
        $f^I_F\cong f'^I_F$.
        Since $f_F\cong f'_F$ (by assumption), Theorem~\ref{TH:witt-cancellation} implies that $f^J_F\cong f'^J_F$.
        Now, if $f^J\cong f'^J$,
        then $f=f^J\oplus f^I\cong f'^J\oplus f'^I=f'$,
        so it is enough to prove that $f^J_F\cong f'^J_F$ implies
        $f^J\cong f'^J$.
    \end{proof}

    {\it \noindent Reduction 5.} We may assume that $(Q_i,g_i)$ is anisotropic for all $i\in J$
    (cf.\ Notation~3).

    \begin{proof}
        Fix some $i\in J$ and assume $(Q_i,g_i)$ is isotropic (so $Q_i\neq 0$).
        By~\ref{subsection:herm-over-ord} and the definition of $J$, the hermitian space $(Q_i,g'_i)$ is diagonalizable,
        hence we can write $(Q_i,g'_i)=(U_1,h_1)\oplus (U'_2,h'_2)$ with $\dim_W U_1=1$.
        As in~\ref{subsection:herm-over-ord}, this  induces a decomposition $(P^i,f'^i)=(U^1,h^1)\oplus (U'^2,h'^2)$.
        Since $(Q_i,g_i)$ is isotropic and $W$ is a division ring,
        we can factor a hyperbolic plane from $(Q_i,g_i)$ (cf.~\ref{subsection:herm-over-ord}).
        The space $(U_1,h_1)\perp (U_1,-h_1)$ is a hyperbolic plane (see \ref{subsection:herm-over-ord}),
        so $(U_1,h_1)$ is isomorphic to a summand of $(Q_i,g_i)$.
        Again, this induces a decomposition $(P^i,f^i)\cong (U^1,f^1)\oplus (U^2,h^2)$.
        Write $\tilde{f}=h^2\oplus\big(\bigoplus_{j\neq i}f^j\big)$
        and $\tilde{f}'=h'^2\oplus\big(\bigoplus_{j\neq i}f'^j\big)$.
        Then $f\cong h^1\oplus \tilde{f}$
        and $f'\cong h^1\oplus \tilde{f}'$. By arguing as
        in Reduction 4, we reduce into proving that $\tilde{f}_F\cong \tilde{f}'_F$
        implies $\tilde{f}\cong \tilde{f}'$.
        We repeat this until $(Q_i,g_i)$ is anisotropic for all $i\in J$.
    \end{proof}

    {\it \noindent Reduction 6.} It is enough to prove the following claim:
    \begin{enumerate}
    \item[($*$)] Let $(E,\tau)$ be a hereditary $R$-order with an $R$-involution.
    Then for all $a\in\units{\Sym}(E,\tau)$, we have $a\sim_{\tau_F} 1$ if and only if $a\sim_\tau 1$
    (notation as in~\ref{subsection:transfer}).
    \end{enumerate}
    under the following assumptions:
    \begin{enumerate}
        \item[(i)] $E=\calO_D^{[n_1,\dots,n_s]}$.
        \item[(ii)] Let $N=n_1+\dots+n_s$ and let $\{e_{ij}\}_{i,j}$ be the standard
        $D$-basis of $E_F=\nMat{D}{N}$. Then, $e_{ii}^\tau=e_{ii}$ for all $1\leq i\leq N$.
        \item[(iii)] Write $\quo{E}=E/\Jac(E)$. Then the induced involution $\quo{\tau}:\quo{E}\to\quo{E}$ is anisotropic, namely,
        $w^{\quo{\tau}}w\neq 0$ for any nonzero $w\in\quo{E}$.
    \end{enumerate}

    \begin{proof}
        By applying transfer with respect to $(P,f)$ as in Proposition~\ref{PR:transfer-into-edomorphism-ring}
        (see also Remark~\ref{RM:transfer-is-OK-with-scalar-ext}),
        we see that  Theorem~\ref{TH:forms-over-hereditary-has-genus-one} is equivalent to proving ($*$) for $E=\End_A(P)$ and $\tau=[g\mapsto \lAd{f}^{-1}g^*\lAd{f}]$.
        We shall verify that $E$ and $\tau$ satisfy (i), (ii) and (iii), given Reductions 1--5.

\smallskip

        For every $i\in J$ (cf.\ Notation~3), there is a unique $1\leq k_i\leq r$ such that $\quo{V^i}$
        of~\ref{subsection:herm-over-ord} (which is a simple $\quo{A}$-module) is isomorphic to $\quo{V_{k_i}}$, where $V_{k_i}$ is defined
        as in \ref{subsection:projectives}.
        We may therefore assume that $V^i=V_{k_i}$ (Lemma~\ref{LM:mod-jacob-radical}).
        Relabeling $J$, we may further assume that $J=\{1,\dots,s\}$ for some $1\leq s\leq t$
        and $i\leq j$ if and only if $k_i\leq k_j$.

        By~\ref{subsection:herm-over-ord} and Reduction~4, we can write
        $
        (P,f)
        =\bigoplus_{i=1}^s\bigoplus_{j=1}^{n_i}(V^i,f^{i,j})$ (note that $J=\{1,\dots,s\}$).
        In particular, $P\cong \bigoplus_{i=1}^s(V^i)^{\oplus n_i}$.
        Now, as explained in \ref{subsection:projectives},
        we have
        \[\Hom_A(V^i,V^j)=\Hom_A(V_{k_i},V_{k_j})\cong \left\{
        \begin{array}{ll}
        \calO_D & i\leq j \\
        \frakm_D & i>j
        \end{array}
        \right.\, ,
        \]
        and the isomorphism turns
        composition into multiplication in $\calO_D$. We therefore get
        \[
        \End_A\Big(\bigoplus_{i=1}^s(V^i)^{\oplus n_i}\Big)=\left[
        \begin{array}{ccc}
        (\Hom_A(V^1,V^1)) & \dots & (\Hom_A(V^s,V^1)) \\
        \vdots & & \vdots \\
        (\Hom_A(V^1,V^s)) & \dots & (\Hom_A(V^s,V^s))
        \end{array}
        \right]^{(\hat{n})}\cong\calO_D^{[\hat{n}]}\ ,
        \]
        for $\hat{n}=(n_1,\dots,n_s)$. This proves (i).

        Next, when identifying $\End_A(P)$ with $\calO_D^{[\hat{n}]}$ as above, the elements $e_{kk}\in \nMat{D}{N}$ of (ii)
        are orthogonal projections of $P$ onto a summand in the orthogonal decomposition
        $
        (P,f)=\bigoplus_{i=1}^s\bigoplus_{j=1}^{n_i}(V^i,f^{i,j})
        $. Therefore, $e_{kk}^{\tau}=e_{kk}$ by Remark~\ref{RM:summand-involution}.

        We finally show (iii):
        By reduction~5, the forms $g_1,\dots,g_s$ are anisotropic, and hence
        so are $f_1,\dots,f_s$ (see~\ref{subsection:herm-over-ord}).
        This means that $\quo{f}$ is anisotropic.
        By the proof of \cite[Pr.~3.3]{BayFi14} (for instance), we have $\Jac(E)P\subseteq P\Jac(A)$,
        and hence we can view $\quo{P}$ as  a left $\quo{E}$-module. Moreover, we have $\quo{E}=\End_{\quo{A}}(\quo{P})$.
        Using Proposition~\ref{PR:transfer-into-edomorphism-ring}
        and the definition of $\quo{f}$,
        it is easy to see that $\quo{f}(wx,y)=\quo{f}(x,w^{\quo{\tau}}y)$
        for every $x,y\in\quo{P}$ and $w\in\quo{E}$.
        Now, if $w^{\quo{\tau}}w=0$, then
        \[\quo{f}(w\quo{P},w\quo{P})=\quo{f}(\quo{P},w^{\quo{\tau}}w\quo{P})=0\ .\]
        Since $\quo{f}$ is anisotropic, we have $w\quo{P}=0$, so $w=0$ because $\quo{E}=\End_{\quo{A}}(\quo{P})$.
    \end{proof}

    The rest of the proof concerns with proving ($*$) under the assumptions (i)--(iii).

    \medskip

    {\it\noindent  Notation 7.}
    Write $\calO=\calO_D$, $\frakm=\frakm_D$ and $\nu=\nu_D$ (cf.\ Theorem~\ref{TH:structure-of-max-orders}).
    We view $\calO$ (resp.\ $D$) as a subring of $E$ (resp.\ $\nMat{D}{N}$)
    via the diagonal embedding.
    Scaling the additive discrete valuation $\nu$ if necessary,
    we may assume that $\nu(\units{D})=\Z$. We fix an element $\pi\in\units{D}$
    with $\nu(\pi)=1$.

    \medskip

    {\it\noindent Claim 8.} We have $s\leq 2$ (recall that $E=\calO_D^{[n_1,\dots,n_s]}$).

    \begin{proof}
        Suppose otherwise.
        Then by (i), there are $1\leq i<j<k\leq N$ such
        that
        \begin{align*}
        e_{ii}Ee_{jj}&=e_{ij}\frakm, & e_{ii}Ee_{kk}& =e_{ik}\frakm, & e_{jj}Ee_{kk}&=e_{jk}\frakm,\\
        e_{jj}Ee_{ii}&=e_{ji}\calO, & e_{kk}Ee_{ii}& =e_{ki}\calO, & e_{kk}Ee_{jj}&=e_{kj}\calO.
        \end{align*}
        However, by assumption (ii), we have
        \begin{align*}
        e_{ii}Ee_{kk}&=(e_{kk}Ee_{ii})^\tau=(e_{ki}\calO)^\tau=((e_{kj}\calO)(e_{ji}\calO))^\tau\\
        &=((e_{kk}Ee_{jj})(e_{jj}Ee_{ii}))^\tau=e_{ii}Ee_{jj}e_{jj}Ee_{kk}=e_{ij}\frakm e_{jk}\frakm=e_{ik}\frakm^2\,,
        \end{align*}
        so we have reached a contradiction.
    \end{proof}

    We now split into cases: When $s=0$, the ring $E$ is the zero ring, so there is nothing to prove.
    We proceed with the case $s=1$.

    \begin{proof}[Proof of {$(*)$} when $s=1$]
        Assume $a\sim_{\tau_F} 1$. Then there is $x\in E_F=\nMat{D}{N}$
        such that $x^\tau x=a$. Since $E=\nMat{\calO}{N}$ (because $s=1$),
        there is $m\in\Z$ such that $x\pi^m\in E$ and $\quo{x\pi^m}\neq 0$ in $\quo{E}$.
        If $m>0$, then $\quo{(x\pi^m)^\tau (x\pi^m)}=\quo{(\pi^m)^\tau x^\tau x\pi^m}=\quo{(\pi^m)^\tau a\pi^m}=
        \quo{(a\pi^m)^\tau}\cdot \quo{\pi^m}=0$,
        contradicting assumption (iii) in Reduction~6. Thus, $m\leq 0$, and we have $x\in E$ and $a\sim_\tau 1$.
    \end{proof}

    We assume henceforth that $s=2$, i.e.\ $E=\calO_D^{[n_1,n_2]}$.

\medskip

    {\it\noindent Claim~9.} For all $n\in\Z$, we have
    \[(\frakm^ne_{ij})^{\tau}=\left\{\begin{array}{ll}{\frakm^{n-1}e_{ji}} & i\leq n_1<j\\
    {\frakm^{n+1}e_{ji}} & i> n_1\geq j\\
    \frakm^ne_{ji} & \text{otherwise}\end{array}\right.\ ,\]
    where for $n<0$, we set $\frakm^n=\{x\in D\suchthat \nu(x)\geq n\}$.

    \begin{proof}
        For an ideal $I\lhd E$, we write $I^0=E$
        and $I^{-n}=\{x\in E_F\suchthat I^nxI^n\subseteq I^n\}$ ($n\geq 0$).
        It is routine to check that for all $n\in\Z$,
        \begin{equation}\label{EQ:powers-of-Jac-rad}
        \Jac(E)^{2n}=\left[\begin{array}{cc} (\frakm^{n}) & (\frakm^{n+1})\\ (\frakm^{n})& (\frakm^{n})\end{array}\right]^{(n_1,n_2)}
        \end{equation}
        (The case $n\geq 0$ can be shown by induction, and  then $n<0$ follows by computation; use the  valuation $\nu$
        on $D$.)

        The involution $\tau_F$ maps $\Jac(E)^{2n}$ bijectively onto itself for all $n\geq0$,
        and hence also for all $n<0$. Since
        \begin{equation}\label{EQ:U-ij}
        (e_{ji}D)^\tau=(e_{jj}E_Fe_{ii})^\tau=e_{ii}E_Fe_{jj}=e_{ij}D\ ,
        \end{equation}
        it follows that $\tau$ maps $e_{ij}D\cap \Jac(E)^{2n}$ bijectively onto
        $e_{ji}D\cap \Jac(E)^{2n}$ for all $n\in\Z$. Equation \eqref{EQ:powers-of-Jac-rad} now yields our claim.
    \end{proof}

    {\it\noindent Notation 10.}
    Write $U_{ij}=e_{ij}D=e_{ii}\nMat{D}{N}e_{jj}$. By \eqref{EQ:U-ij}, we have $U_{ij}^\tau=U_{ji}$.
    We extend $\nu$ to the spaces $U_{ij}$ by setting
    $
    \nu(de_{ij})=\nu(d)
    $ for all $d\in D$.
    If $x\in U_{ij}$ and $y\in U_{jk}$, then clearly
    \[\nu(xy)=\nu(x)+\nu(y)\ \]
    (note that $U_{ij}U_{jk}=U_{ik}$). In addition, by Claim~9, we have
    \begin{equation}\label{EQ:tau-nu-relations}
    \nu(x^{\tau})=\left\{\begin{array}{ll}{\nu(x)-1} & i\leq n_1<j\\
    {\nu(x)+1} & i> n_1\geq j\\
    \nu(x) & \text{otherwise}\end{array}\right.\ .
    \end{equation}

    \medskip

    {\it\noindent Claim 11.} Assume we are given $x_{ij}\in U_{ij}$ for all $1\leq i,j\leq N$.
    \begin{enumerate}
        \item[(a)] If $j\leq n_1$, then $\nu(\sum_{0<i\leq n_1}x_{ij}^\tau x_{ij})=\min_{0<i\leq n_1} \nu(x_{ij}^\tau x_{ij})$.
        \item[(b)] If $j\leq n_1$, then $\nu(\sum_{n_1<i\leq N}x_{ij}^\tau x_{ij})=\min_{n_1<i\leq N} \nu(x_{ij}^\tau x_{ij})$.
        \item[(c)] If $j> n_1$, then $\nu(\sum_{0<i\leq n_1}x_{ij}^\tau x_{ij})=\min_{0<i\leq n_1} \nu(x_{ij}^\tau x_{ij})$.
        \item[(d)] If $j> n_1$, then $\nu(\sum_{n_1<i\leq N}x_{ij}^\tau x_{ij})=\min_{n_1<i\leq N} \nu(x_{ij}^\tau x_{ij})$.
    \end{enumerate}

    \begin{proof}
        We first prove (a).
        Write $m=\min_{0<i\leq n_1}\nu(x_{ij})$
        and $x=\sum_{i=1}^{n_1}x_{ij}\pi^{-m}$. Note that $x\in E$,  $\quo{x}\neq 0$ (in $\quo{E}$)
        and $x^\tau x\in e_{jj}^{\tau}Ee_{jj}\subseteq U_{jj}$.
        We  have $\nu(\sum_{0<i\leq n_1}x_{ij}^\tau x_{ij})\geq \min_{0<i\leq n_1} \nu(x_{ij}^\tau x_{ij})$,
        and by \eqref{EQ:tau-nu-relations}, the right hand side equals $2m$.
        Assume by contradiction that $\nu(\sum_{0<i\leq n_1}x_{ij}^\tau x_{ij})>2m$.
        Then
        \[\nu(x^\tau x)=\nu\Big(\sum_{0<i\leq n_1}\sum_{0<i'\leq n_1}(\pi^{-m})^{\tau}x_{ij}^\tau x_{i'j}\pi^{-m}\Big)
        =\nu\Big(\sum_{0<i\leq n_1}x_{ij}^\tau x_{ij}\Big)-2m>0\ ,\]
        and so $\quo{x}^{\quo{\tau}} \quo{x}=0$, contradicting assumption (iii) of Reduction~6.
        Part (d) is shown in the same way.

        We now  prove (b).
        We have $\nu(\sum_{n_1<i\leq N}x_{ij}^\tau x_{ij})=\nu(\sum_{n_1<i\leq N}e_{jN}^\tau x_{ij}^\tau x_{ij}e_{jN})+1$
        (note that $\nu(e_{jN})=0$, and hence $\nu(e_{jN}^\tau)=-1$ by \eqref{EQ:tau-nu-relations}).
        By applying (d) to $x_{ij}e_{jN}\in U_{iN}$ ($n_1< i\leq N$), we get
        \[\nu\Big(\sum_{n_1<i\leq N}x_{ij}^\tau x_{ij}\Big)=\min_{n_1<i\leq N} \nu(e_{jN}^\tau x_{ij}^\tau x_{ij}e_{jN})+1=
        \min_{n_1<i\leq N} \nu(x_{ij}^\tau x_{ij})\ ,\]
        as required.
        Claim (c) is shown similarly (with (a) in place of (d)).
    \end{proof}

    We finally prove the remaining case $s=2$, thus completing the proof of Theorem~\ref{TH:forms-over-hereditary-has-genus-one}.

    \begin{proof}[Proof of $(*)$ in case $s=2$]
    Assume that
    $a\sim_{\tau_F} 1$. Then
    there is $x\in E_F$ such that $x^\tau x=a$.
    We claim that $x\in E$, and hence $a\sim_\tau 1$.

    Write $x=\sum_{i,j}x_{ij}$ with $x_{ij}\in U_{ij}$
    (cf.\ Notation~10), and
    fix some $0<j\leq n_1$.
    By parts (a) and (b) of Claim~11, we have
    \[\nu\Big(\sum_{0<i\leq n_1}x_{ij}^\tau x_{ij}\Big)=\min_{0<i\leq n_1} \nu(x_{ij}^\tau x_{ij})
    \quad\text{and}\quad
    \nu\Big(\sum_{n_1<i\leq N}x_{ij}^\tau x_{ij}\Big)=\min_{n_1<i\leq N} \nu(x_{ij}^\tau x_{ij})\ .\]
    By \eqref{EQ:tau-nu-relations}, $\nu(x_{ij}^\tau x_{ij})$ is even
    when $i\leq n_1$ and odd otherwise, so $\nu(\sum_{0<i\leq n_1}x_{ij}^\tau x_{ij})\neq \nu(\sum_{n_1<i\leq N}x_{ij}^\tau x_{ij})$.
    Thus,
    \[
    \nu\Big(\sum_{0<i\leq N}x_{ij}^\tau x_{ij}\Big)=
    \min\Big\{\nu\Big(\sum_{0<i\leq n_1}x_{ij}^\tau x_{ij}\Big), \nu\Big(\sum_{n_1<i\leq N}x_{ij}^\tau x_{ij}\Big)\Big\}=
    \min_{0<i\leq N} \nu(x_{ij}^\tau x_{ij})
    \]
    On the other hand, $\sum_{i=1}^Nx_{ij}^\tau x_{ij}=e_{jj}x^\tau xe_{jj}=e_{jj}ae_{jj}\in E$, so we
    have
    \[
    \min_{0<i\leq N} \nu(x_{ij}^\tau x_{ij})\geq 0\ .
    \]
    By \eqref{EQ:tau-nu-relations},
    we have $\nu(x_{ij}^\tau x_{ij})=2\nu(x_{ij})$ for $i\leq n_1$ and $\nu(x_{ij}^\tau x_{ij})=2\nu(x_{ij})+1$
    otherwise. Thus,
    $\nu(x_{ij})\geq 0$ for all $0\leq i<N$.

    Now fix some $n_1<j\leq N$. Using parts (c) and (d) of Claim~11, one similarly shows that
    \[
    \min_{0<i\leq N} \nu(x_{ij}^\tau x_{ij})\geq 0\ .
    \]
    In this case, $\nu(x_{ij}^\tau x_{ij})=2\nu(x_{ij})$ for $i> n_1$, and otherwise $\nu(x_{ij}^\tau x_{ij})=2\nu(x_{ij})-1$
    (by \eqref{EQ:tau-nu-relations}). Therefore,
    $\nu(x_{ij})\geq 0$ when $n_1<i$,
    and $\nu(x_{ij})\geq 1$ when $ i\leq n_1$.
    This means $x\in E=\calO_D^{[n_1,n_2]}$, as required.
    \end{proof}

\subsection{Corollaries and Remarks}

    We finish this section with some immediate corollaries and remarks.

    \begin{cor}\label{CR:unitary-group}
        Let $A$ and $\sigma$ be as in Theorem~\ref{TH:forms-over-hereditary-has-genus-one},
        and let $a,b\in \units{\Sym}(A,\sigma)$. If $a\sim_{\sigma_F} b$,
        then $a\sim_\sigma b$.
    \end{cor}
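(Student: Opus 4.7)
The plan is to deduce the corollary directly from Theorem~\ref{TH:forms-over-hereditary-has-genus-one} by applying transfer (Proposition~\ref{PR:transfer-into-edomorphism-ring}) to the free rank-one module $P=A$. The idea is to realize $\sigma$-congruence of elements of $\units{\Sym}(A,\sigma)$ as isometry of suitable $1$-hermitian forms on $A$, so that the theorem, applied to this particular $P$, yields the corollary.

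First I would introduce the ``standard'' unimodular $1$-hermitian form $f_0:A\times A\to A$ on the right $A$-module $P=A$, given by $f_0(x,y)=x^\sigma y$. A quick computation shows that $f_0$ is $1$-hermitian and that $f_{0,\ell}:A\to A^*$ is bijective, so $(A,f_0)$ is a unimodular $1$-hermitian space. Next, for every $a\in\units{\Sym}(A,\sigma)$, define another unimodular $1$-hermitian form $f_a:A\times A\to A$ by $f_a(x,y)=x^\sigma a y$ (unimodularity follows from $a\in\units{A}$).

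Apply Proposition~\ref{PR:transfer-into-edomorphism-ring} with the base hermitian space $(A,f_0)$. Under the natural isomorphism $E:=\End_A(A)\cong A$ (sending $c\in A$ to left multiplication $L_c$), the identity $f_0(g^\tau x,y)=f_0(x,gy)$ translates into $c^\tau=c^\sigma$, so the transfer involution $\tau$ coincides with $\sigma$. Computing $f_{0,\ell}^{-1}(f_a)_\ell$ under the identification $E\cong A$ yields precisely $a$ (here one uses $a^\sigma=a$). Hence the transfer bijection
\[
\{\text{isometry classes of unimodular $1$-hermitian forms on $A$}\}\ \longleftrightarrow\ \mathrm{H}(A,\sigma)
\]
sends $[f_a]$ to $[a]$; in particular $f_a\cong f_b$ if and only if $a\sim_\sigma b$.

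Now assume $a\sim_{\sigma_F}b$. By Remark~\ref{RM:transfer-is-OK-with-scalar-ext}, the transfer correspondence commutes with scalar extension, so the congruence $a\sim_{\sigma_F}b$ in $A_F$ gives $(f_a)_F\cong (f_b)_F$ as unimodular $1$-hermitian forms over $(A_F,\sigma_F)$. Theorem~\ref{TH:forms-over-hereditary-has-genus-one}, applied to $P=A$ and the forms $f_a,f_b$, yields $f_a\cong f_b$, which by the transfer bijection means $a\sim_\sigma b$. There is no substantive obstacle: the corollary is simply the translation of the theorem's conclusion through the $P=A$ case of transfer.
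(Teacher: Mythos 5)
Your proof is correct and takes essentially the same approach as the paper: you construct the forms $f_c(x,y)=x^\sigma cy$ on $P=A$, observe that $f_a\cong f_b$ iff $a\sim_\sigma b$, and invoke Theorem~\ref{TH:forms-over-hereditary-has-genus-one}; you simply derive the equivalence via the transfer proposition rather than verifying it by a direct computation as the paper does.
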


    \begin{proof}
        For $c\in \units{\Sym}(A,\sigma)$, define $f_c:A\times A\to A$ by $f_c(x,y)=x^\sigma cy$.
        It is easy to check that $f_c$ is a unimodular $1$-hermitian form over $(A,\sigma)$,
        and furthermore, $f_a\cong f_b$ if and only if $a\sim_\sigma b$. The corollary
        therefore follows from Theorem~\ref{TH:forms-over-hereditary-has-genus-one}.
    \end{proof}

    The following corollary will be needed in Section~\ref{section:non-unimodular}.
    We refer the reader to \cite[\S2]{BayFiMol13} for the relevant definitions (particularly the notion
    of scalar extension in hermitian categories).

    \begin{cor}\label{CR:main-for-herm-cats}
        Let $\catC$ be an \emph{$R$-linear hermitian category} (see \cite[\S2D]{BayFiMol13}) and let $P\in\catC$ be an object
        such that $\End_{\catC}(P)$ is a hereditary $R$-order.
        Let $f,f':P\to P^*$ be two unimodular $1$-hermitian forms. Then $(P_F,f_F)\cong (P_F,f'_F)$ implies
        $(P,f)\cong (P,f')$.
    \end{cor}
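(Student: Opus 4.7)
The plan is to reduce the statement to Corollary~\ref{CR:unitary-group} via transfer into the endomorphism ring. Set $E = \End_{\catC}(P)$, which by hypothesis is a hereditary $R$-order, and define $\tau : E \to E$ by $g^\tau = f^{-1} \circ g^* \circ f$, where $g^* : P^* \to P^*$ is the dual morphism in $\catC$. Since $\catC$ is $R$-linear and $f$ is a morphism in $\catC$, the map $\tau$ is an $R$-involution. Thus $(E, \tau)$ falls exactly under the hypothesis of Corollary~\ref{CR:unitary-group}.

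Next I would invoke the transfer principle in hermitian categories (see \cite[Pr.~2.4]{QuSchSch79} or \cite[II.\S3]{Kn91}), which generalizes Proposition~\ref{PR:transfer-into-edomorphism-ring} from $\rproj{A}$ to an arbitrary hermitian category. It yields a bijection between isometry classes of unimodular $1$-hermitian forms on $P$ and $\tau$-congruence classes in $\units{\Sym}(E,\tau)$, sending the class of $h$ to the class of $f^{-1}\circ h$. Setting $a = f^{-1}\circ f' \in \units{\Sym}(E,\tau)$, the isomorphism class of $f'$ corresponds to the $\tau$-congruence class of $a$, and that of $f$ to the class of $1$.

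The one point requiring care is the compatibility of this transfer with scalar extension. Using the scalar extension of $R$-linear hermitian categories described in \cite[\S2D]{BayFiMol13}, the base change of $\catC$ to $F$ carries $P$ to $P_F$, $E$ to $E_F$ (as the endomorphism ring of $P_F$ in $\catC_F$), $\tau$ to $\tau_F$, and $a$ to $a\otimes 1$; moreover the transfer bijection commutes with these base changes, in perfect analogy with Remark~\ref{RM:transfer-is-OK-with-scalar-ext}. Consequently the hypothesis $(P_F,f_F)\cong (P_F,f'_F)$ translates into $a\sim_{\tau_F} 1$.

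Finally, applying Corollary~\ref{CR:unitary-group} to the hereditary $R$-order with involution $(E,\tau)$ yields $a \sim_\tau 1$, and transferring back gives $(P,f)\cong (P,f')$. The main obstacle is purely bookkeeping: making sure the general hermitian-category transfer and its compatibility with $R$-linear scalar extension are set up correctly. Once that is in place, the proof is a direct reduction to Corollary~\ref{CR:unitary-group}.
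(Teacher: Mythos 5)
Your proof is correct and follows essentially the same approach as the paper: both reduce via transfer in hermitian categories (with respect to $(P,f)$, giving $E=\End_{\catC}(P)$ and its induced involution $\tau$) and then invoke the hereditary-order result, using compatibility of transfer with scalar extension. The only cosmetic difference is that you route through Corollary~\ref{CR:unitary-group} (the $\tau$-congruence restatement), whereas the paper cites Theorem~\ref{TH:forms-over-hereditary-has-genus-one} directly; since the former is an immediate consequence of the latter applied to $(E,\tau)$ with base module $E$, the two are interchangeable.
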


    \begin{proof}
        We reduce to the setting of Theorem~\ref{TH:forms-over-hereditary-has-genus-one}
        by applying \emph{transfer in hermitian categories} with respect to $(P,f)$; see for instance \cite[\S2C]{BayFiMol13}.
        Transfer is compatible with scalar extension by \cite[\S2E]{BayFiMol13}.
    \end{proof}


    \begin{remark}
        Let $A,\sigma,u,P,f,f'$ be as in Theorem~\ref{TH:forms-over-hereditary-has-genus-one} except the assumption that
        $A$ is hereditary.
        Then $f_F\cong f'_F$ implies that for every hereditary $R$-order $A\subseteq B\subseteq A_F$ with $B^\sigma =B$,
        we have $f_B\cong f'_B$. Here, $f_B:(P\otimes_AB)\times(P\otimes_AB)\to B$ is the $u$-hermitian form over $(B,\sigma_F|_B)$ given
        by $f_B(x\otimes b,x'\otimes b')=b^\sigma f(x,x')b'$, and $f'_B$ is defined similarly.
        Scharlau \cite[Th.~1]{Schar74A} proved that when $A_F$ is separable over $F$,
        one can always find a hereditary order $B$ as above.
        %
        %
        %
    \end{remark}

    \begin{remark}\label{RM:arbitrary-orders}
        Theorem~\ref{TH:forms-over-hereditary-has-genus-one} fails for non-hereditary
        orders; see~{\cite[Rm.~5.6]{BayFi14}}.

        On the other hand, the proof of Theorem~\ref{TH:forms-over-hereditary-has-genus-one}
        applies to many  $R$-orders with involution which are not hereditary.
        For example, assume $R$ is a DVR with maximal ideal $\frakm$,
        let $A=\smallSMatII{R}{\frakm^2}{R}{R}$, and define $\sigma:A\to A$ by
        $\smallSMatII{a}{b}{c}{d}^\sigma=\smallSMatII{d}{b}{c}{a}$.
        Then Step~1 can be applied to $(A,\sigma)$
        (\cite[Th.~6.2]{BayFi14})
		and
        one easily checks that $t=1$ and $I=\{1\}$ (cf.\ Notation~3),
        so after Reduction~4, we get $P=0$ and hence Theorem~\ref{TH:forms-over-hereditary-has-genus-one} holds
        for $u$-hermitian forms over $(A,\sigma)$.
        However, the example in \cite[Rm.~5.6]{BayFi14} shows that if $R$ is carefully chosen,
        then $A=\smallSMatII{R}{\frakm^2}{R}{R}$ has involutions for which the theorem fails.
    \end{remark}

    \begin{remark}
        In Theorem~\ref{TH:forms-over-hereditary-has-genus-one},
        the assumption that $f$ and $f'$ are defined
        on the same base module  (or on isomorphic $A$-modules) is necessary. For example,
        let $R$ be any  DVR with maximal ideal $\frakm$
        and consider
        \[
        A=\left[\begin{smallmatrix}
        R & \frakm & \frakm & \frakm \\
        R & R & \frakm & \frakm \\
        R & R & R & \frakm \\
        R & R & R & R
        \end{smallmatrix}\right]
        \]
        and the involution $\sigma:A\to A$ reflecting matrices
        along the diagonal emanating from the \emph{top-right} corner.
        Let $\{e_{ij}\}$ be the standard basis of $A_F=\nMat{F}{4}$, let
        \[
        P=(e_{11}+e_{44})A,\qquad P'=(e_{22}+e_{33})A,
        \]
        and define $1$-hermitian forms $f:P\times P\to A$, $f':P'\times P'\to A$
        by
        \[
        f(x,y)=x^\sigma y,\qquad f'(x',y')=x'^\sigma y'\ .
        \]
        Then $A$ is hereditary (Theorems~\ref{TH:local-hereditary} and~\ref{TH:structure-of-hered-rings}),
        and $x\mapsto (e_{21}+e_{34})x:P_F\to P'_F$ is easily seen
        to be an isometry from $(P_F,f_F)$ to $(P'_F,f'_F)$. However,
        $P$ and $P'$ are not isomorphic as $A$-modules, as can be easily seen
        by reducing modulo $\Jac(A)$ (in the sense of Lemma~\ref{LM:mod-jacob-radical}).
    \end{remark}

\section{Non-Unimodular Hermitian Forms over Hereditary Orders}
\label{section:non-unimodular}

    In this section, we use Theorem~\ref{TH:forms-over-hereditary-has-genus-one}
    and results from \cite{BayerFain96} to extend Theorem~\ref{TH:forms-over-hereditary-has-genus-one}
    to nearly unimodular hermitian forms.

\medskip

    Let $(A,\sigma)$ be a ring with involution and let $u\in\Cent(A)$ be an element
    satisfying $u^\sigma u=1$. Recall from the introduction that the \emph{coradical}
    of a $u$-hermitian form $f:P\times P\to A$ is defined as\footnote{
        We chose the name ``coradical'' because, in the literature, the kernel of $f_\ell$
        is often called the \emph{radical} of $f$.}
    \[
    \corad(f):=
    \mathrm{coker}(f_\ell:P\to P^*)\ .
    \]
    It is a right $A$-module, and
    $(P,f)$ is called \emph{nearly unimodular} if it is $A$-semisimple.

    It is easy to check that when $A$ is an $R$-algebra and $\sigma$ is $R$-linear,
    we have $\corad(f_S)\cong \corad(f)\otimes_RS$
    as $A_S$-modules  for any commutative $R$-algebra $S$ (e.g.\ use \cite[Lm.~1.2]{BayFi14}).


\medskip

    As in Section~\ref{section:hermitian}, assume henceforth
    that $R$ is a semilocal PID with $2\in\units{R}$, and let $F$ be the fraction field of $R$.
    We assume $R\neq F$.

    \begin{thm}\label{TH:non-unimodular-forms}
        Let $A$ be a hereditary $R$-order, let
        $\sigma:A\to A$ be an $R$-involution, and let $u\in\Cent(A)$ be an
        element with $u^\sigma u=1$.
        \begin{enumerate}[(i)]
            \item Let $P\in\rproj{A}$, and let $f,f':P\times P\to A$ be
            two nearly unimodular $u$-hermitian forms over $(A,\sigma)$ whose coradicals are isomorphic.
            Then $(P_F,f_F)\cong (P_F,f'_F)$ implies $(P,f)\cong (P,f')$.
            \item For any unimodular $u$-hermitian space $(Q,g)$ over $(A_F,\sigma_F)$
            there exists a nearly unimodular $(P,f)\in\herm[u]{A,\sigma}$ such that
            $(P_F,f_F)\cong (Q,g)$. Up to isomorphism, the number of such hermitian spaces is finite.
        \end{enumerate}
    \end{thm}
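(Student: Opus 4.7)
The plan is to reduce part~(i) to the unimodular case handled by Corollary~\ref{CR:main-for-herm-cats}, by translating nearly unimodular hermitian forms over $(A,\sigma)$ into unimodular hermitian forms in a suitable $R$-linear hermitian category $\catC$, following \cite{BayerFain96} and \cite{BayFiMol13}. Given a nearly unimodular $(P,f)$, the adjoint $f_\ell\colon P\to P^*$ is injective and has semisimple cokernel $C:=\corad(f)$, and the short exact sequence $0\to P\xrightarrow{f_\ell}P^*\to C\to 0$ carries all the information of $f$. One introduces a hermitian category $\catC$ whose objects are (essentially) such sequences---equivalently, lattice pairs $P\subseteq L$ with $L/P$ semisimple---and whose duality is induced by $M\mapsto M^*$; the semisimplicity of $C$ ensures that the dualized sequence is of the same type. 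Under this dictionary, $f$ becomes a unimodular hermitian form $\tilde f$ on the $\catC$-object $X=(P\xrightarrow{f_\ell}P^*)$, and the hypothesis $\corad(f)\cong\corad(f')$ provides, after choosing such an isomorphism, a unimodular form $\tilde f'$ on the same object $X$. Isometries in $\catC$ correspond to coradical-preserving isometries in $\herm[u]{A,\sigma}$, and the correspondence commutes with base change to $F$.

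To invoke Corollary~\ref{CR:main-for-herm-cats}, it remains to verify that $\End_\catC(X)$ is a hereditary $R$-order. By Theorem~\ref{TH:local-hereditary} this may be checked after completion at each maximal ideal of $R$, and the block-matrix description $A=\calO_D^{[\hat m]}$ from Theorem~\ref{TH:structure-of-hered-rings}, together with the classification of projective $A$-modules in~\ref{subsection:projectives}, identifies $\End_\catC(X)$ with a hereditary order of the same block form---intuitively, inserting the quotient $C$ refines the flag encoded by $\hat m$. Corollary~\ref{CR:main-for-herm-cats} then produces an isometry $\tilde f\cong\tilde f'$ in $\catC$, which unravels to $f\cong f'$ in $\herm[u]{A,\sigma}$, completing part~(i).

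For part~(ii), given unimodular $(Q,g)$ over $(A_F,\sigma_F)$, pick any full $A$-lattice $P_0\subseteq Q$ and form its dual $P_0^\#=\{y\in Q\suchthat g(P_0,y)\subseteq A\}$, a full $A$-lattice by Proposition~\ref{PR:lattices}. The quotient $P_0^\#/P_0$ has finite length; using the local structure of hereditary orders (Theorems~\ref{TH:local-hereditary},~\ref{TH:structure-of-hered-rings}), enlarge $P_0$ inside $P_0^\#$ to a lattice $P$ satisfying $P\subseteq P^\#$ with $P^\#/P$ semisimple---locally this amounts to replacing $P_0$ by an intermediate lattice whose stabilizer is a hereditary suborder corresponding to the natural block-matrix flag. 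The restriction of $g$ to $P\times P$ is then a nearly unimodular form with $(P_F, (g|_{P\times P})_F)\cong (Q,g)$. Finiteness up to isomorphism follows because locally at each prime such lattices are parametrized by a finite combinatorial datum (foreshadowing the parahoric picture of Section~\ref{section:BT}), and only finitely many primes of $R$ intervene.

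The principal obstacle throughout is the computation of $\End_\catC(X)$ in~(i) and the verification that it is again a hereditary $R$-order: this is precisely where the hypothesis that $A$ is hereditary is used decisively (cf.\ Remark~\ref{RM:arbitrary-orders}), all other steps being essentially formal translations between the hermitian category $\catC$ and $\herm[u]{A,\sigma}$, or local-to-global patching arguments.
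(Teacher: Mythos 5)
Your approach to part~(i) is essentially the paper's: translate the nearly unimodular form into a unimodular form over a morphism category \`a la \cite{BayerFain96}, show the endomorphism ring of the relevant object is a hereditary $R$-order, and apply Corollary~\ref{CR:main-for-herm-cats}. Two points deserve scrutiny, though. First, you pass from the hypothesis $\corad(f)\cong\corad(f')$ directly to ``a unimodular form $\tilde f'$ on the same object $X$.'' This is not automatic: the objects $(P,f_\ell,P^*)$ and $(P,f'_\ell,P^*)$ in the morphism category have the same source, same target, and isomorphic cokernels, but one must still prove they are isomorphic \emph{as objects of the morphism category}. This is the content of the paper's Lemma~\ref{LM:morphism-lemma}, a nontrivial projective-cover argument that uses the semilocality of $A$ in an essential way; your sketch treats it as formal. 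Second, you propose working in a subcategory of $\Mor(\rproj{A})$ consisting of injective morphisms with semisimple cokernel. For this subcategory to be a hermitian category one must check it is stable under duality, which amounts to showing that $\mathrm{Ext}^1_A(S,A)$ is semisimple for $S$ a semisimple torsion module over a hereditary order. This is true, but requires an argument; the paper sidesteps it by working in the full $\Mor(\rproj{A})$ and only verifying heredity of $\End(M)$ for the specific object $M$ at hand (Lemma~\ref{LM:endo-ring-of-morph-with-semisimple-coker}).

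For part~(ii), your sketch is vaguer than what is needed. You invoke ``local structure'' and ``hereditary suborders corresponding to the natural block-matrix flag'' to enlarge $P_0$ to a lattice $P$ with $P^\#/P$ semisimple, but give no concrete recipe. The paper's argument is purely algebraic and global: given $P\subseteq\tilde P$, if $\tilde P/P$ is not semisimple pick the largest $n$ with $(\tilde P/P)J^n\neq 0$ (where $J=\Jac(A)$), replace $P$ by $P_1=P+\tilde P J^n$, and use the hereditary identity $J^{-1}J=A$ (packaged as Lemma~\ref{LM:Jac-radical-condition}) to verify $P_1\subseteq\tilde P_1$; induct on the length of $\tilde P/P$. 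Your finiteness argument does align with the paper's (local finiteness plus faithfully flat descent of isomorphism classes, i.e.\ Proposition~\ref{PR:etale-extsions-of-modules}). In short: right strategy, but the projective-cover step in part~(i) and the explicit enlargement step in part~(ii) are the genuine content and are missing from your sketch.
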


    When $A=R$ and $u=1$, part (i) of the theorem was proved
    by Auel, Parimala and Suresh \cite[Cor.~3.8]{AueParSur14} under the assumption
    that $\corad(f)$ is semisimple and cyclic. Part (ii) is a triviality in this setting.

    Scharlau \cite{Schar74A} showed that any separable $F$-algebra with an $F$-involution contains a
    hereditary $R$-order which is stable under the involution (see also \cite[Th.~1.7.1]{HijNish94}
    concerning orders in arbitrary algebras).
    This means that part (ii) of the theorem can be applied to any  separable $F$-algebra with involution.

\medskip

    We shall need several lemmas for the proof.
    For $0\neq\frakp\in\Spec R$, let $\hat{R}_\frakp$ and $\hat{F}_\frakp$
    denote the $\frakp$-adic completions of $R_\frakp$ and $F$, respectively.

    \begin{lem}\label{LM:simple-scalar-ext}
        Let $A$ be an $R$-order,  and let $M$ be a finitely generated right $A$-module.
        Then $M$ is semisimple if and only if $M_{\hat{R}_\frakp}$ is a semisimple
        $A_{\hat{R}_\frakp}$-module for all $0\neq\frakp\in\Spec R$. In this case,
        $M_F=0$.
    \end{lem}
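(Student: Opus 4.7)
The plan is to first establish a key structural fact about simple $A$-modules, and then bootstrap from the local case at each maximal ideal of $R$.

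The key claim is that any simple right $A$-module $S$ is annihilated by a (unique) maximal ideal of $R$. Since $A$ is finitely generated over $R$, so is $S$. If $S$ were $R$-torsion-free, it would be free over the semilocal PID $R$; choosing any nonzero $r\in\Jac(R)$ (which exists because $R\neq F$), $rS$ would be a proper nonzero $A$-submodule by Nakayama, contradicting simplicity. Hence $S$ is $R$-torsion, and writing $\ann_R(S)=(a)$ with $a=p_1^{e_1}\cdots p_m^{e_m}$ a prime factorization, the resulting CRT decomposition $S=\bigoplus_i S[p_i^{e_i}]$ is $A$-stable since $R$ is central, forcing $m=1$. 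Applying the same reasoning to $p^{e-1}S$ (an $A$-submodule, nonzero by minimality of $\ann_R(S)$ and proper whenever $e\geq 2$) forces $e=1$, so $\ann_R(S)=\frakp$ is a maximal ideal. In particular $M_F=0$ whenever $M$ is semisimple, since every simple summand of $M$ is $R$-torsion.

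For the forward direction, use this description to decompose a semisimple $M$ as $\bigoplus_\frakp M[\frakp]$ over the finitely many maximal ideals of $R$, where $M[\frakp]$ is the sum of simple summands killed by $\frakp$. Tensoring with $\hat{R}_\frakp$ kills $M[\frakq]$ for $\frakq\neq\frakp$ (generators of $\frakq$ become units) and preserves $M[\frakp]$, whose $A$-action factors through $A/\frakp A$; the individual simple summands remain simple over $A_{\hat{R}_\frakp}$ because the $A_{\hat{R}_\frakp}$-action also factors through $A/\frakp A$. Hence $M_{\hat{R}_\frakp}$ is semisimple.

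For the converse, apply the simple-modules claim to the $\hat{R}_\frakp$-order $A_{\hat{R}_\frakp}$ to conclude that each simple $A_{\hat{R}_\frakp}$-module is killed by $\frakp\hat{R}_\frakp$, so the hypothesis gives $\frakp M_{\hat{R}_\frakp}=0$, and a fortiori $\Jac(R)\cdot M_{\hat{R}_\frakp}=0$, for every maximal $\frakp$. Faithful flatness of $\hat{R}_\frakp$ over $R_\frakp$ then yields $(\Jac(R)M)_\frakp=0$ at every maximal prime $\frakp$, so $\Jac(R)M=0$. Thus $M$ is a module over $R/\Jac(R)\cong\prod_i k(\frakp_i)$, yielding a decomposition $M=\bigoplus_i M_i$ with $\frakp_i M_i=0$; each $M_i$ is canonically isomorphic to $M_{\hat{R}_{\frakp_i}}$, hence semisimple over $A_{\hat{R}_{\frakp_i}}$, and therefore semisimple over $A$ since both rings act on $M_i$ through the common quotient $A/\frakp_i A$. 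The main obstacle is the simple-modules claim at the start; the rest is routine bookkeeping exploiting that $R$ is a semilocal PID and that $R$ lies in the center of $A$.
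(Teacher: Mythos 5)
Your proposal is correct, but it takes a genuinely different route from the paper in both directions. The paper's proof of $(\Longrightarrow)$ reduces to $M$ simple and splits into the two cases $M\frakp = M$ or $M\frakp = 0$ for each fixed nonzero $\frakp$; in the first case Nakayama gives $M_{\hat R_\frakp}=0$, and in the second the natural map $M\to M_{\hat R_\frakp}$ is an isomorphism, so simplicity is preserved. For the converse the paper cites a splitting criterion from Reiner's \emph{Maximal Orders} (Th.~3.20, adapted from localizations to completions). You instead front-load a sharper structural claim --- every simple $A$-module is annihilated by exactly one maximal ideal of $R$, not merely by some prime power --- which you prove via the torsion-free-hence-free argument, CRT, and a second application of the same idea to rule out higher exponents. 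You then carry this claim through both implications by decomposing $M$ into isotypic pieces indexed by maximal ideals and matching them, one for one, against the completions $M_{\hat R_\frakp}$. The gain is a self-contained argument for the converse direction that avoids any external reference; the cost is a somewhat longer path for the forward direction, where the paper's two-case split is more economical. One small point worth flagging for exposition: when you pass from $\frakp M_{\hat R_\frakp}=0$ to $(\Jac(R)M)_\frakp=0$, you are implicitly using that $\Jac(R)M\otimes_R\hat R_\frakp$ identifies with $\Jac(R)\cdot M_{\hat R_\frakp}$ inside $M_{\hat R_\frakp}$, which holds because $\hat R_\frakp$ is flat over $R$ and $M$ is finitely generated; this is correct but deserves a word. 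Everything else is rigorous, and the reduction at the end via $M\cong\bigoplus_i M_i$ with $M_i\cong M_{\hat R_{\frakp_i}}$ acting through the common quotient $A/\frakp_iA$ is clean and closes the argument.
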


    \begin{proof}
        To prove  ($\Longrightarrow$) and that $M_F=0$,
        we may assume $M$ is simple. Let $0\neq \frakp\in\Spec R$ ($\frakp$ exists since $R\neq F$). Then $M\frakp =M$ or $M\frakp =0$.
        When $M\frakp =M$, Nakayama's Lemma implies that $M$ is annihilated by an element
        of $1+\frakp$, hence $M_{\hat{R}_\frakp}=0$ and $M_F=0$. On the other hand, if $M\frakp=0$,
        then $M_F=0$, and
        the map $m\mapsto m\otimes 1:M\to M_{\hat{R}_\frakp}$ is an isomorphism of $A$-modules (its inverse
        is given by $m\otimes r\mapsto m r'$ where $r'$ is any element of $R$ with $r-r'\in\frakp\hat{R}_\frakp$).
        Thus, $M_{\hat{R}_\frakp}$ is simple as an $A$-module, and hence also as an $A_{\hat{R}_\frakp}$-module.

        To prove the other direction, it is enough to show that any surjection from $M$ to another right $A$-module
        is split, and
		this follows from \cite[Th.~3.20]{MaximalOrders} (this result treats localizations
		of $R$, but the proof generalizes verbatim to completions).
    \end{proof}

    \begin{lem}\label{LM:Jac-radical-condition}
    	Let $A$ be a hereditary $R$-order, let $J:=\Jac(A)$, and let $n\in\N$.
    	Then any a two-sided
    	$A$-lattice $L$ in $A_F$ satisfying $J^{n+1}L\subseteq A$
    	also satisfies $J^nLJ^n\subseteq A$.
    \end{lem}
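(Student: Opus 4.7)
The plan is to reduce to the complete local case and then exploit the classical fact that the Jacobson radical of a hereditary order is an invertible two-sided fractional ideal.

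First, I would reduce to the case where $R$ is a complete DVR. The ring map $A\to\prod_{0\neq\frakp\in\Spec R}A\otimes_R\hat R_\frakp$ is faithfully flat and respects the formation of Jacobson radicals, products of sublattices, and containment in $A$. By Theorem~\ref{TH:local-hereditary}, each completion $A\otimes_R\hat R_\frakp$ remains a hereditary $\hat R_\frakp$-order, so it is enough to treat each $\hat R_\frakp$-factor separately. Then, using Theorem~\ref{TH:structure-of-hered-rings}, I would decompose $A\cong\prod_{i=1}^t\calO_{D_i}^{[\hat n^{(i)}]}$; since $J$ and every two-sided $A$-lattice $L\subseteq A_F$ split compatibly along this product, I may further assume $A=\calO_D^{[\hat m]}$, so that $A_F=\nMat{D}{N}$ is central simple over the center of $D$.

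The key input is that in this setting $J$ is invertible as a two-sided fractional ideal of $A$: there exists a two-sided $A$-lattice $J^{-1}\subseteq A_F$ with $JJ^{-1}=J^{-1}J=A$. This is a standard consequence of hereditarity (see \cite[Ch.~39]{MaximalOrders}), and can also be verified by writing down $J^{-1}$ directly from the block-matrix presentation of $J$ recalled in~\ref{subsection:structure-of-hered-orders}. Granted this, the rest is formal: multiplying the hypothesis $J^{n+1}L\subseteq A$ on the left by $(J^{-1})^{n+1}$ yields $L\subseteq(J^{-1})^{n+1}A=J^{-(n+1)}$, whence
\[
J^nLJ^n\subseteq J^n\cdot J^{-(n+1)}\cdot J^n=J^{-1}\cdot J^n=J^{n-1}\subseteq A,
\]
the last inclusion being valid as soon as $n\geq 1$.

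The only nontrivial ingredient is thus the existence of the inverse $J^{-1}$ in step three; everything else is bookkeeping. An alternative route, which avoids quoting invertibility directly, is to invoke the classical classification of two-sided fractional ideals of a hereditary order in a central simple algebra over a complete DVR as an infinite cyclic group generated by $J$; this would force $L=J^k$ for some $k\in\Z$, reducing the lemma to the elementary implication $k\geq-(n+1)\Rightarrow 2n+k\geq 0$ valid for $n\geq 1$.
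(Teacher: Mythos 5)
Your proof is correct and rests on the same key fact as the paper's, namely the invertibility of $J$ for a hereditary order, so the approaches are essentially the same. The paper's version is shorter: it sets $J^{-1}=\{a\in A_F : aJ\subseteq A\}$, invokes $J^{-1}J=A$ from \cite[Th.~39.1]{MaximalOrders}, and computes directly $J^nLJ^n\subseteq J^{-1}(J^{n+1}L)J\subseteq J^{-1}AJ=A$ (using $J^n\subseteq J$ on the right, hence $n\geq 1$); your reductions to the complete local case and to a single block $\calO_D^{[\hat m]}$ are not needed, since the cited invertibility already applies, though they do supply a self-contained verification of that input.
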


    \begin{proof}
    	Let $J^{-1}=\{a\in A\suchthat aJ\subseteq A\}$. Since $A$ is hereditary,
    	$J^{-1}J=A$ (\cite[Th.~39.1]{MaximalOrders}). Now, $J^nLJ^n\subseteq J^{-1}(J^{n+1}L)J\subseteq J^{-1}AJ=A$.
    \end{proof}

    For the next lemmas,  let $\Mor(\rproj{A})$ denote  the category of morphisms
    in $\rproj{A}$. Recall that the objects of $\Mor(\rproj{A})$
    consist of triples $(P,f,Q)$ such that $P,Q\in\rproj{A}$
    and $f\in\Hom_A(P,Q)$. A morphism from $(P,f,Q)$ to $(P',f',Q')$
    is a pair $(\phi,\psi)\in\Hom_A(P,P')\times\Hom_A(Q,Q')$
    such that $f'\phi=\psi f$.

    \begin{lem}\label{LM:morphism-lemma}
        Let $A$ be any semilocal ring, and
        let
        $(P,f,Q),(P',f',Q')\in\Mor(\rproj{A})$.
        Then $(P,f,Q)\cong (P',f',Q')$ if and only if
        $P\cong P'$, $Q\cong Q'$ and $\coker(f)\cong \coker(f')$.
    \end{lem}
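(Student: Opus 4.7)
The $(\Longrightarrow)$ direction is immediate: an isomorphism $(\phi,\psi):(P,f,Q)\to(P',f',Q')$ yields $P\cong P'$, $Q\cong Q'$, and $\psi$ descends to an isomorphism $\coker f\cong\coker f'$. For $(\Longleftarrow)$, one may replace $(P',f',Q')$ by an isomorphic object so that $P'=P$ and $Q'=Q$, and fix an identification $C:=\coker f\cong\coker f'$, giving two surjections $\pi,\pi':Q\twoheadrightarrow C$. It then suffices to construct $\psi\in\Aut(Q)$ with $\pi'\psi=\pi$ (which forces $\psi(\im f)=\im f'$), together with $\phi\in\Aut(P)$ satisfying $f'\phi=\psi f$. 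The key tool, used throughout, is that for a finitely generated projective $A$-module $M$, the endomorphism ring $\End_A(M)$ is semilocal, the canonical map $\End_A(M)\twoheadrightarrow\End_{\bar A}(\bar M)$ (with $\bar{(\cdot)}$ denoting reduction modulo $J:=\Jac(A)$) is surjective, and its kernel equals $\Jac(\End_A(M))$; hence an endomorphism of $M$ is an automorphism if and only if its reduction is an automorphism of $\bar M$.

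To produce $\psi$: by projectivity of $Q$, pick any $\alpha:Q\to Q$ with $\pi'\alpha=\pi$. Over the semisimple ring $\bar A$, the kernels of $\bar\pi,\bar\pi':\bar Q\twoheadrightarrow\bar C$ are summands of $\bar Q$ with isomorphic complements $\bar C$, so cancellation of semisimple modules yields $\ker\bar\pi\cong\ker\bar\pi'$ and hence an automorphism $\bar\beta$ of $\bar Q$ with $\bar\pi'\bar\beta=\bar\pi$. Lift $\bar\beta$ to an automorphism $\beta$ of $Q$ via the tool above. The discrepancy $\pi'\beta-\pi:Q\to C$ vanishes modulo $J$, so it lands in $CJ$; since $\pi'$ restricts to a surjection $QJ\twoheadrightarrow CJ$, projectivity of $Q$ furnishes $\eta:Q\to QJ$ with $\pi'\eta=\pi'\beta-\pi$, and $\psi:=\beta-\eta$ satisfies $\pi'\psi=\pi$ while still reducing to $\bar\beta$ modulo $J$, hence remains an automorphism.

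To produce $\phi$: projectivity of $P$ and the surjection $f':P\twoheadrightarrow\im f'$ allow one to lift $\psi f$ to some $\phi_0:P\to P$ with $f'\phi_0=\psi f$; any other lift has the form $\phi_0+\xi$ with $\xi:P\to\ker f'$. Decomposing $\bar P=\ker\bar f\oplus L=\ker\bar f'\oplus L'$ over the semisimple $\bar A$, the relation $\bar f'\bar\phi_0=\bar\psi\bar f$ forces $\bar\phi_0$ to have, with respect to these decompositions, a block-triangular form $\smallSMatII{a}{b}{0}{d}$ in which the diagonal block $d:L\to L'$ is a composite of three isomorphisms (including the restriction $\bar\psi|_{\im\bar f}:\im\bar f\cong\im\bar f'$), while $a:\ker\bar f\to\ker\bar f'$ is unconstrained. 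Cancellation applied to $\ker\bar f\oplus L\cong\ker\bar f'\oplus L'$ together with $L\cong L'$ yields $\ker\bar f\cong\ker\bar f'$, so one can choose $\bar\xi$ to modify the top row so that $\overline{\phi_0+\xi}$ is an automorphism of $\bar P$; lifting gives the desired $\phi\in\Aut(P)$. The main technical obstacle throughout is ensuring that lifts of automorphisms from $\bar A$ remain automorphisms over $A$, which relies essentially on the semilocal hypothesis; a subsidiary point is the $\eta$-correction used to achieve $\pi'\psi=\pi$ on the nose rather than only mod $J$.
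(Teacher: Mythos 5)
Your $(\Longrightarrow)$ direction is fine, and so is your construction of $\psi$: it is essentially the paper's ``claim'' specialized to $\alpha=\pi$, $\alpha'=\pi'$, $\xi=\id_C$, carried out by lifting an automorphism of $\bar Q$ and then making an $\eta$-correction so that $\pi'\psi=\pi$ holds exactly, rather than by the paper's projective-cover argument. Both routes work, and both use only that the \emph{source} $Q$ is finitely generated projective (the cokernel $C$ can be arbitrary).

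The gap is in the construction of $\phi$. You assert that the block $a\colon\ker\bar f\to\ker\bar f'$ ``is unconstrained'' and that one can choose $\bar\xi$ to turn it into an isomorphism. But the $\bar\xi$ you have access to are exactly the reductions mod $J$ of maps $\xi\colon P\to\ker f'$, and any such $\bar\xi$ lands in $K':=(\ker f'+PJ)/PJ$, the image of $\ker f'$ inside $\bar P$ --- \emph{not} in all of $\ker\bar f'$. These can differ drastically: for $A=\Z_{(p)}$, $P=Q=A$, $f'=p\cdot$, one has $\ker f'=0$ so $K'=0$, while $\ker\bar f'=\bar P$ is everything. Thus if you pick a complement $K''$ of $K'$ in $\ker\bar f'$, the $K''$-component of $a$ is frozen once $\phi_0$ is chosen, and ``$\ker\bar f\cong\ker\bar f'$'' alone does not tell you that this frozen piece can be completed to an isomorphism. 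Your final ``lifting gives the desired $\phi$'' also silently presupposes that $\bar\xi$ lifts to a map $P\to\ker f'$, which again forces $\bar\xi$ to take values in $K'$.

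The gap is closable, but it requires an observation you did not make: since $f'\phi_0(P)=\psi f(P)=\psi(\im f)=\im f'=f'(P)$, one has $\phi_0(P)+\ker f'=P$; reducing mod $J$ gives $\bar\phi_0(\bar P)+K'=\bar P$, which in your block decomposition says precisely that $a(\ker\bar f)+K'=\ker\bar f'$. Only now does cancellation over the semisimple ring $\bar A$ (giving $\ker\bar f\cong\ker\bar f'$) produce a $\bar\xi\colon\bar P\to K'$ with $\bar\phi_0+\bar\xi$ invertible, and this $\bar\xi$ does lift to $\Hom_A(P,\ker f')$ by projectivity of $P$. Alternatively --- and this is what the paper does --- simply re-run your $\psi$-argument a second time with $\alpha=f\colon P\twoheadrightarrow\im f$, $\alpha'=f'\colon P\twoheadrightarrow\im f'$, and the isomorphism $\psi|_{\im f}\colon\im f\to\im f'$ in place of $\id_C$; that argument never used projectivity (or any special property) of the target module, so it applies verbatim and sidesteps the block-matrix analysis entirely.
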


    \begin{proof}
        We only show the non-trivial direction.

        We first claim the following: Let $V,V'$ be isomorphic f.g.\ projective right $A$-modules,
        let $U, U'$ be arbitrary $A$-modules, let $\alpha,\alpha',\xi$
        be $A$-homomorphisms as in the diagram
        \[
        \xymatrix{
        V \ar[r]^{\alpha} \ar@{.>}[d]_{\psi} & U \ar[d]^{\xi} \\
        V' \ar[r]^{\alpha'} & U'
        }
        \]
        such that $\alpha$ and $\alpha'$ are surjective and $\xi$ is an isomorphism.
        Then there exists an isomorphism $\psi:V\to V'$ making the above diagram commutative.

        If the claim holds, then by taking $V=Q$, $V'=Q'$, $U=\coker(f)$, $U'=\coker(f')$
        and some isomorphism $\xi:U\to U'$,
        we get an isomorphism
        $\psi:Q\to Q'$ taking $\im(f)$ to $\im(f')$. Applying the claim again
        with $V=P$, $V'=P'$, $U=\im(f)$, $U'=\im(f')$, $\alpha=f$, $\alpha'=f'$, $\xi=\psi|_{\im(f)}$
        yields an isomorphism $\phi:P\to P'$ such that $\psi f=f'\phi$. Thus,
        $(\phi,\psi)$ is an isomorphism from $(P,f,Q)$ to $(P',f',Q')$.

        It is left to prove the claim: For any $A$-module $M$,
        write $\quo{M}=M/M\Jac(A)$ and let $\rho_M$ denote the projection
        $M\to \quo{M}$. The map $\alpha$ induces a surjective $A$-homomorphism
        $\quo{\alpha}:\quo{V}\to\quo{U}$. Since $\quo{A}$ is semisimple, we can write $\quo{V}=N\oplus \ker(\quo{\alpha})$
        and identify $N$ with $\quo{U}$ via $\quo{\alpha}$.
        We also write $W=\ker(\quo{\alpha})$ and let $\beta:\quo{V}=\quo{U}\oplus W\to W$
        denote the projection onto $W$. Consider the map
        $\eta:V\to U\oplus W$ given by $\eta(x)=\alpha x\oplus \beta(\rho_V{x})$.
        Observe that $\rho_V=(\rho_U\oplus \id_W)\circ \eta$,
        hence $\ker \eta \subseteq V\Jac(A)$.
        Since $\rho_V$ is also surjective, we have $U\oplus W=
        \im(\eta)+\ker(\rho_{U\oplus W})=\im(\eta)+(U\oplus W)\Jac(A)$,
        so by Nakayama's Lemma, $\eta$ is surjective ($U$ and $W$ are f.g.\ since they are epimorphic images of $V$).
        This means $\eta:V\to U\oplus W$ is a projective cover.
        In the same way, construct $\eta':V'\to U'\oplus W'$.
        Now, since $\quo{U}\oplus W=\quo{V}\cong\quo{V'}=\quo{U'}\oplus W'$
        and $\quo{U}\cong \quo{U'}$, there is an isomorphism
        $\zeta:W\to W'$ (because $\quo{A}$ is semisimple and $\quo{V}$ is f.g.). Consider the isomorphism $\xi\oplus \zeta:U\oplus W\to U'\oplus W'$.
        The universal property of projective covers implies that there is an isomorphism
        $\psi:V\to V'$ such that $(\xi\oplus \zeta)\eta=\eta'\psi$. Composing both sides with the projection
        $U'\oplus W'\to U'$ yields $ \xi\alpha=\alpha'\psi$,
        as required.
    \end{proof}

    \begin{lem}\label{LM:endo-ring-of-morph-with-semisimple-coker}
        Let $A$ be a hereditary $R$-order and let $(P,f,Q)\in\Mor(\rproj{A})$. If
        $f$ is injective and $\coker(f)$ is a semisimple $A$-module,
        then $\End_{\Mor(\rproj{A})}(P,f,Q)$ is a hereditary $R$-order.
    \end{lem}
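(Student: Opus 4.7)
The plan is to reduce to the complete local case and then identify $E$ explicitly with an order of the form $\calO_D^{[\hat{n}]}$, which is hereditary by Theorem~\ref{TH:structure-of-hered-rings}.

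First I reduce to the case where $R$ is a complete DVR. By Theorem~\ref{TH:local-hereditary}, it suffices to verify that $E\otimes_R\hat{R}_\frakp$ is hereditary for each nonzero $\frakp\in\Spec R$. Since $\Hom_A(-,-)$ commutes with the flat base change $R\to\hat{R}_\frakp$ on finitely presented modules, $E\otimes_R\hat{R}_\frakp$ is naturally isomorphic to the corresponding endomorphism ring over $\hat{R}_\frakp$. The hypotheses transfer: $A_{\hat{R}_\frakp}$ is hereditary (Theorem~\ref{TH:local-hereditary}), $f_{\hat{R}_\frakp}$ is injective by flatness, and $\coker(f_{\hat{R}_\frakp})\cong\coker(f)\otimes_R\hat{R}_\frakp$ remains semisimple by Lemma~\ref{LM:simple-scalar-ext}. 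Then Theorem~\ref{TH:structure-of-hered-rings} writes $A$ as a product of local hereditary orders $\calO_{D_i}^{[\hat{n}^{(i)}]}$; the modules $P,Q$ and the map $f$ decompose compatibly, so it suffices to treat each factor, and I may assume $A=\calO_D^{[\hat{m}]}$ for some $\hat{m}=(m_1,\dots,m_r)$.

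Second, using the injectivity of $f$, I identify $E$ with the subring
\[
\{\psi\in\End_A(Q)\suchthat \psi(L)\subseteq L\}\subseteq \End_A(Q),
\]
where $L=f(P)$. Since $Q/L$ is semisimple as an $A$-module, $L\supseteq Q\Jac(A)$, and the quotient $L/Q\Jac(A)$ is a submodule of the semisimple $\quo{A}$-module $Q/Q\Jac(A)$, hence a direct summand. Lifting the corresponding idempotent of $\End_{\quo{A}}(Q/Q\Jac(A))$ to an idempotent of $\End_A(Q)$ (possible because $\End_A(Q)$ is a finite module over the complete local ring $R$ and therefore semiperfect), I obtain a direct sum decomposition $Q=Q_1\oplus Q_2$ in $\rproj{A}$ such that $L=Q_1\oplus Q_2\Jac(A)$.

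Third, by Proposition~\ref{PR:projective-decomp}, $Q_1\cong\bigoplus_j V_j^{l_j}$ and $Q_2\cong\bigoplus_j V_j^{s_j}$, where $V_1,\dots,V_r$ are the indecomposable projective $A$-modules described in \S\ref{subsection:projectives}. In the block form of $\End_A(Q)$ induced by $Q=Q_1\oplus Q_2$, writing $\psi=(\psi_{kl})$ with $\psi_{kl}:Q_l\to Q_k$, the condition $\psi(L)\subseteq L$ reduces to the single requirement $\psi_{21}(Q_1)\subseteq Q_2\Jac(A)$; the other three blocks remain unconstrained. Using $\Hom_A(V_j,V_i)\cong e_iAe_j$ (equal to $\calO_D$ iff $i\geq j$, $\frakm_D$ otherwise) and $\Hom_A(V_j,V_i\Jac(A))\cong e_i\Jac(A)e_j$ (equal to $\calO_D$ iff $i>j$, $\frakm_D$ otherwise), and reordering the summands of $Q$ in the sequence $V_1^{s_1},V_1^{l_1},V_2^{s_2},V_2^{l_2},\dots,V_r^{s_r},V_r^{l_r}$ (dropping zero entries), $E$ is identified with $\calO_D^{[\hat{n}]}$ for $\hat{n}=(s_1,l_1,s_2,l_2,\dots,s_r,l_r)$. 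By Theorem~\ref{TH:structure-of-hered-rings}, this is hereditary.

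The main obstacle is the combinatorial verification in the third step: identifying the correct reordering of the summands of $Q$ that converts the block-matrix description of $E$ into the staircase pattern of a chain order. The key point is that the shift by $\Jac(A)$ in the $(2,1)$-block causes each type $V_j$ to split into two consecutive ``layers'' in the resulting chain, with the $Q_2$-copies preceding the $Q_1$-copies.
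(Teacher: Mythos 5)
Your proof is correct. The reduction to the complete local case and to $A=\calO_D^{[\hat m]}$ matches the paper's Step~1. Where you differ is in the main identification: the paper decomposes $(P,f,Q)$ into indecomposable objects of $\Mor(\rproj{A})$ (citing a structure lemma from an earlier paper of Bayer-Fluckiger and the second author), obtaining representatives $M_1,\dots,M_{2r}$ with $M_{2i-1}=(V_i,1,V_i)$, $M_{2i}=(V_i,1,V_{i+1})$, $M_{2r}=(V_r,\pi_D,V_1)$, and then computes $\Hom(M_i,M_j)$ to identify $E$ with a chain order. You instead use the injectivity of $f$ to realize $E$ as $\{\psi\in\End_A(Q): \psi(L)\subseteq L\}$ with $L=f(P)$, and then split $Q=Q_1\oplus Q_2$ adapted to the sandwich $Q\Jac(A)\subseteq L\subseteq Q$ via idempotent lifting in the semiperfect ring $\End_A(Q)$ (this requires, and implicitly uses, that the kernel of $\End_A(Q)\to\End_{\quo A}(\quo Q)$ is $\Jac(\End_A(Q))$, which is standard for $Q$ projective). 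The resulting block computation with ordering $(1,2),(1,1),(2,2),(2,1),\dots,(r,2),(r,1)$ is a cyclic rotation of the paper's ordering $M_1,\dots,M_{2r}$, so the two identifications of $E$ with $\calO_D^{[\hat n]}$ agree. Your route is marginally more self-contained (no appeal to the external decomposition lemma for morphism categories), at the cost of the explicit idempotent-lifting argument; the paper's route encapsulates that combinatorics in a cited Krull--Schmidt statement.
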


    \begin{proof}
        The $R$-algebra $\End_{\Mor(\rproj{A})}(P,f,Q)$ is contained in $\End_R(Q)\times\End_R(P)$, so it is an $R$-order.
        It is not difficult to check that for any \emph{flat} $R$-algebra $S$,
        we have $\End(P_S,f_S,Q_S)\cong \End(P,f,Q)_S$ as $S$-algebras. Therefore, by Theorem~\ref{TH:local-hereditary}
        and Lemma~\ref{LM:simple-scalar-ext}, it is enough to prove the lemma when $R$ is a complete DVR.

        By Theorem~\ref{TH:structure-of-hered-rings}, $A\cong \prod_{i=1}^t\calO_{D_i}^{[\hat{n}_i]}$.
        Working in each component separately, we may assume $A=\calO_D^{[\hat{m}]}$ with $\hat{m}=(m_1,\dots,m_r)$. We
        now  use the notation introduced
        in \ref{subsection:projectives}, namely, the modules $V_1,\dots,V_r$ and the identification
        of $\Hom_A(V_i,V_j)$ with $\calO_D$ or $\frakm_D$.

        By the proof of \cite[Lm.~7.5]{BayFi14}, we can write $(P,f,Q)$
        as a direct sum of morphisms $\bigoplus_{j=1}^n(U_j,g_j,Z_j)$ such that for all
        $j$, either $Z_j=0$ and $U_j\neq 0$, or $Z_j$ is indecomposable and $g_j$ is injective.
        Since  $f$ is injective, $Z_j=0$ is impossible,
        so for all $j$, the module $Z_j$ is indecomposable and $g_j$ is injective.
        Furthermore, since $\coker(f)=\bigoplus_j\coker(g_j)$, the module $\coker(g_j)$ is semisimple
        for all $j$.

        Fix some $1\leq j\leq n$. There is unique $1\leq i\leq r$ such that $Z_j\cong V_i$ (Proposition~\ref{PR:projective-decomp}).
        Viewing $U_j$ as a submodule of $V_i$, we must have $V_i\Jac(A)\subseteq U_j \subseteq V_i$, because $V_i/U_j$ is semisimple.
        Since $V_i/V_i\Jac(A)$ is simple (see~\ref{subsection:projectives}), either $U_j=V_i$ or $U_j=V_i\Jac(A)$.
        In fact, $V_i\Jac(A)=V_{i-1}$ for $1< i \leq r$, and $V_1\Jac(A)\cong V_r$ via $x\mapsto \pi_D^{-1} x$,
        where $\pi_D$ is some generator of the $\calO_D$-ideal $\frakm_D$.
        It follows that $(U_j,g,Z_j)$ is isomorphic to
        \begin{itemize}
            \item $M_{2i-1}:=(V_i,1_D,V_i)$ for $1\leq i\leq r$, or
            \item $M_{2i}:=(V_{i},1_D,V_{i+1})$ for $1\leq i< r$, or
            \item $M_{2r}:=(V_r,\pi_D,V_1)$.
        \end{itemize}
        (recall that $\Hom_A(V_i,V_j)$ is identified with $\calO_D$ or $\frakm_D$).
        We may therefore write
        $
        (P,f,Q)\cong \bigoplus_{i=1}^{2r} M_i^{n_i}
        $.
        It is easy to check that for all $1\leq i, j\leq 2r$, we have
        \[
        \Hom(M_i,M_j)\cong\left\{ \begin{array}{ll}
        \calO_D & i\leq j \\
        \frakm_D & i>j
        \end{array}
        \right.\,.
        \]
        where the isomorphism is given by sending $(\phi,\psi)\in\Hom(M_i,M_j)$
        to $\phi$, viewed as an element of $\calO_D$ or $\frakm_D$.
        This isomorphism turns composition into multiplication in $\calO_D$.
        We now have
        \[
        \End(P,f,Q)=\left[
        \begin{array}{ccc}
        (\Hom(M_1,M_1)) & \dots & (\Hom(M_{2r},M_1)) \\
        \vdots & & \vdots \\
        (\Hom(M_1,M_{2r})) & \dots & (\Hom(M_{2r},M_{2r}))
        \end{array}
        \right]^{(\hat{n})}\cong\calO_D^{[\hat{n}]}
        \]
        where $\hat{n}=(n_1,\dots,n_{2r})$. Therefore, $\End(P,f,Q)$ is hereditary by Theorem~\ref{TH:structure-of-hered-rings}.
    \end{proof}

    We  now  prove Theorem~\ref{TH:non-unimodular-forms}.
    The proof uses \emph{$R$-linear hermitian categories} as defined in \cite[\S2D]{BayFiMol13}.
    Our notation will follow \cite[\S2]{BayFiMol13}, and we refer the reader to this source for all
    relevant definitions. See also \cite[Ch.~7]{SchQuadraticAndHermitianForms}
    or \cite[Ch.~II]{Kn91} for an extensive discussion.

    \begin{proof}[Proof of Theorem~\ref{TH:non-unimodular-forms}]
        (i) Recall that $u$-hermitian forms over $(A,\sigma)$ correspond to
        $1$-hermitian forms over the \emph{$R$-linear hermitian category} $(\rproj{A},*,\{\omega_P\}_{P\in\rproj{A}})$
        via $(P,f)\mapsto (P,f_\ell)$ (see \ref{subsection:hermitian-forms} for the definitions of $*$ and $\omega$).
        We make $\Mor(\rproj{A})$ into a hermitian category
        by setting $(P,f,Q)^*=(Q^*,f^*,P^*)$ and $\omega_{(P,f,Q)}=(\omega_P,\omega_Q)$
        (see \cite[\S3]{BayerFain96}); in fact, $\Mor(\rproj{A})$ is an $R$-linear hermitian
        category. By \cite[Th.~1]{BayerFain96},
        there is an equivalence between
        the category of \emph{arbitrary} $1$-hermitian forms over
        $\rproj{A}$ and the category of \emph{unimodular} $1$-hermitian forms over $\Mor(\rproj{A})$.
        This equivalence is compatible with \emph{flat} base change of $R$-linear hermitian categories
        (see~\cite[\S2D]{BayFiMol13} for the definition); the proof is similar to the proof of \cite[Pr.~3.7]{BayFiMol13}.
        Note that the base change in $\rproj{A}$, viewed as an $R$-linear hermitian category,
        is the same as the base change of finitely generated projective right $A$-modules
        by \cite[Rm.~2.2]{BayFiMol13}.

        Let $(M,h)$ and $(M',h')$ be the unimodular $1$-hermitian forms over $\Mor(\rproj{A})$ corresponding to
        $(P,f)$ and $(P,f')$, respectively. By the construction of the equivalence in  \cite[Th.~1]{BayerFain96},
        we have $M=(P,f_\ell,P^*)$ and $M'=(P,f'_\ell,P^*)$, so by Lemma~\ref{LM:morphism-lemma},
        the assumption $\corad(f)\cong \corad(f')$ implies
        that $M\cong M'$.
        Therefore, by the previous paragraph, the theorem will follow from
        Corollary~\ref{CR:main-for-herm-cats} if we show that $\End_{\Mor(\rproj{A})}(M)$ is hereditary.

        Since $\corad(f)$ is semisimple,  $\corad(f_F)\cong \corad(f)_F=0$ (Lemma~\ref{LM:simple-scalar-ext}).
        Thus, $(f_F)_\ell$ is onto. Since $A_F$ is semisimple (see \ref{subsection:orders}),
        $\mathrm{length}(P_F)=\mathrm{length}((P_F)^*)$, and hence $(f_F)_\ell$ is an isomorphism.
        This means that $f_\ell$ is injective. Now,
        $\End_{\Mor(\rproj{A})}(M)$ is a hereditary $R$-order by  Lemma~\ref{LM:endo-ring-of-morph-with-semisimple-coker}.


\vspace{0.2cm}

        (ii)
        For every full $A$-lattice $P$ in $Q$,
        let $\tilde{P}=\{x\in Q\suchthat g(P,x)\subseteq A\}$.
		Indentifying $Q$ with $Q':=\Hom_{A_F}(P_F,A_F)$ via $g_\ell$, we see
		that $\tilde{P}$ corresponds to the copy of $P'=\Hom_A(P,A)$ in $\Hom_{A_F}(P_F,A_F)$
		(see~\ref{subsection:orders}).
		Using this and Proposition~\ref{PR:lattices},
        it is easy to check that $\tilde{P}$ is a full $A$-lattice,
        and the map $P\mapsto \tilde{P}$ is involutive and reverses inclusion.
        Furthermore, $P\in\rproj{A}$, and
        if $P\subseteq\tilde{P}$, then $f:=g|_{P\times P}$ is a $u$-hermitian form over $(A,\sigma)$ and $\corad(f)\cong \tilde{P}/P$.
        It is therefore enough to prove that there is a full $A$-lattice $P$ in $Q$ such that $P\subseteq \tilde{P}$
        and $\tilde{P}/P$ is semisimple.

        Choose some full $A$-lattice $P$ in $Q$ and write $J=\Jac(A)$.
        Replacing $P$ with $P\cap \tilde{P}$, we may assume that $P\subseteq \tilde{P}$.
        The $A$-module $M:=\tilde{P}/P$ is of finite length, so
        by Nakayama's Lemma, there is $n\geq 0$, such that $MJ^n\neq 0$ and $MJ^{n+1}=0$.
        If $n=0$, then $M$ is semisimple (because $A$ is semilocal) and we are done,
        so assume $n>0$. Write $P_1=P+\tilde{P}J^{n}$.
        We claim that $P_1\subseteq \tilde{P}_1$.
        Provided this holds, we have
        $P\subsetneq P_1\subseteq \tilde{P}_1\subsetneq \tilde{P}$, and therefore we may replace $P$
        with $P_1$ and proceed by induction on the $A$-length of $M$.
		Proving $P_1\subseteq\tilde{P}_1$ is equivalent to showing $g(P_1,P_1)\subseteq A$.
		Write $L=g(\tilde{P},\tilde{P})$. Then $L$ is a  two-sided $A$-lattice in $A_F$.
		Furthermore, $J^{n+1}L=g(\tilde{P}J^{n+1},\tilde{P})\subseteq g(P,\tilde{P})\subseteq A$,
		so by Lemma~\ref{LM:Jac-radical-condition}, $J^{n}LJ^n\subseteq A$.
		Now, $g(P_1,P_1)=g(P,P)+g(\tilde{P}J^n,P)+g(P,\tilde{P}J^n)+g(\tilde{P}J^n,\tilde{P}J^n)
		\subseteq A+J^nLJ^n\subseteq A$, proving the claim.
		This completes the proof of the existence of $(P,f)$.

\smallskip

        It remains to show that,  up to isomorphism, there are finitely many $(P,f)\in\herm[u]{A,\sigma}$
        such that $(P_F,f_F)\cong (Q,g)$ and $\corad(f)$ is semisimple. By (i), it is enough to prove that there
        are only finitely many possibilities for
        $P$ and $\corad(f)$, up to isomorphism.

        We start with $P$. When $R$ is a complete DVR, Proposition~\ref{PR:projective-decomp}
        implies that there are finitely many $P$-s up to isomorphism with $P_F\cong Q$.
        For general $R$, note that $(P_{\hat{R}_\frakp})_{\hat{F}_\frakp}\cong Q\otimes_F{\hat{F}_\frakp}$
        as $A_{\hat{F}_\frakp}$-modules  for all $0\neq\frakp\in\Spec R$. Thus, by the case of a complete DVR,
        there are finitely many possibilities for $P_{\hat{R}_\frakp}\in\rproj{A_{\hat{R}_\frakp}}$, up to isomorphism.
        Since $\prod_{0\neq\frakp\in\Spec R}\hat{R}_\frakp$ is faithfully flat over $R$,
        Proposition~\ref{PR:etale-extsions-of-modules} implies that there are finitely many possible $P$-s up to isomorphism.

        To see that $\corad(f)$ has finitely many possibilities up to isomorphism,
        note that $\corad(f)$ is an epimorphic image of $P^*/P^*\Jac(A)$, which
        is semisimple of finite length. Since we showed that $P$ has finitely many
        possibilites up to isomorphism, we are done.
    \end{proof}

    \begin{remark}\label{RM:non-semisimple-corad}
        (i) Theorem~\ref{TH:non-unimodular-forms}(i) may fail
        for hermitian forms which are not nearly unimodular.
        For example, the quadratic forms $\Trings{1,9}$ and $\Trings{2,18}$
        are isomorphic over $\Q_3$
        (since $(x+3y)^2+9(\frac{1}{3}x-y)^2=2x^2+18y^2$), but not over
        $\Z_3$ (they are not equivalent modulo $3\Z_3$).
        Their coradicals are isomorphic to $\Z_3/9\Z_3$, which is not a semisimple
        $\Z_3$-module. There are also
        examples in which there is no similitude between the forms, e.g.\ $\Trings{1,1,9}$ and $\Trings{1,2,18}$  over $\Z_3$.

        (ii) The form $(P,f)$ in  Theorem~\ref{TH:maximal-orders}(ii) is not unique in general.
        For example, the quadratic forms $\Trings{1,1,-1}$ and $\Trings{1,3,-3}$ over $\Z_3$ are non-isomorphic and nearly
        unimodular, but they are isomorphic over $\Q_3$.
		
		(iii)
        The existence of $(P,f)$ in   Theorem~\ref{TH:non-unimodular-forms}(ii)  holds
        when
        $R$ is an arbitrary Dedekind domain; use \cite[Th.~4.21, Th.~4.22]{MaximalOrders} to reduce
        to the semilocal case.
    	
        (iv) If one allows hermitian spaces to have non-projective base modules, then
        the existence of $(P,f)$ in
    	Theorem~\ref{TH:non-unimodular-forms}(ii)  holds for
    	any $R$-order $A$ with Jacobson radical $J:=\Jac(A)$ satisfying:
    	\begin{itemize}
    		\item[($*$)] $J^2L\subseteq A$ implies $JLJ\subseteq A$
    		for any two-sided $A$-lattice $L$ in $A_F$.
    	\end{itemize}
        Indeed, in this case, Lemma~\ref{LM:Jac-radical-condition} holds for    $A$ (apply
        ($*$) to $J^{n-1}LJ^{n-1}$).
        Examples where ($*$) holds include all commutative $R$-orders. If $R$
        is a DVR with maximal ideal $\frakm$, then ($*$) also holds for the
        non-hereditary $R$-order $A=\smallSMatII{R}{\frakm^2}{R}{R}$
        considered in Remark~\ref{RM:arbitrary-orders}
        ($J=\smallSMatII{\frakm}{\frakm^2}{R}{\frakm}$ and
        the largest  $L\subseteq A_F$ with $J^2L\subseteq A$ is $\smallSMatII{\frakm^{-1}}{R}{\frakm^{-2}}{\frakm^{-1}}$).
        With the same notation, an example of $R$-order not satisfying ($*$) is given by taking
        \[
        A=\left[\begin{smallmatrix}
        R & \frakm & \frakm^2 \\
        R & R & \frakm \\
        R & R & R
        \end{smallmatrix}\right]
        \qquad
        \text{and}
        \qquad
        L=\left[\begin{smallmatrix}
        R & R & \frakm \\
        \frakm^{-1} & \frakm^{-1} & R \\
        \frakm^{-1} & \frakm^{-1} & R
        \end{smallmatrix}\right]\ .
        \]
        The details are left to the reader.
    \end{remark}

    Part (i) of Theorem~\ref{TH:non-unimodular-forms} can be strengthened when $A$ is assumed to be \emph{maximal}.

    \begin{thm}\label{TH:maximal-orders}
        Let $A,\sigma,u$ be as in Theorem~\ref{TH:non-unimodular-forms}, and suppose $A$ is a maximal
        $R$-order. Let $(P,f)$, $(P',f')$ be two nearly unimodular $u$-hermitian spaces over $(A,\sigma)$
        with isomorphic coradicals.
        Then $(P_F,f_F)\cong (P'_F,f'_F)$ implies $(P,f)\cong (P',f')$.
    \end{thm}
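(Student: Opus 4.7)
The plan is to deduce Theorem~\ref{TH:maximal-orders} from Theorem~\ref{TH:non-unimodular-forms}(i) by first establishing that $P\cong P'$ as $A$-modules. Granted this, fix an $A$-isomorphism $\phi:P\to P'$ and transport $f'$ to a form $f''(x,y):=f'(\phi x,\phi y)$ on $P$. Then $(P,f'')\cong(P',f')$ as $u$-hermitian spaces, so $\corad(f'')\cong\corad(f')\cong\corad(f)$ and $(P_F,f''_F)\cong (P'_F,f'_F)\cong (P_F,f_F)$. Applying Theorem~\ref{TH:non-unimodular-forms}(i) to the pair $(f,f'')$ on $P$ will yield $(P,f)\cong(P,f'')\cong (P',f')$.

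Everything therefore reduces to proving $P\cong P'$. From $(P_F,f_F)\cong(P'_F,f'_F)$ we have $P_F\cong P'_F$ as $A_F$-modules, and the goal is to upgrade this to an isomorphism of $A$-modules using the maximality of $A$. Let $S:=\prod_{0\neq\frakp\in\Spec R}\hat{R}_\frakp$; since $R\neq F$ this is a non-empty product and is faithfully flat over $R$. By Proposition~\ref{PR:etale-extsions-of-modules}, it suffices to verify $P\otimes_R\hat{R}_\frakp\cong P'\otimes_R\hat{R}_\frakp$ for each non-zero prime $\frakp$. By Theorem~\ref{TH:local-hereditary}, $A\otimes_R\hat{R}_\frakp$ is a maximal order; combining Theorem~\ref{TH:structure-of-hered-rings} with the observation that $\calO_D^{[\hat n]}$ is properly contained in the full matrix ring $\nMat{\calO_D}{n_1+\cdots+n_r}$ whenever $\hat n=(n_1,\ldots,n_r)$ has length $r\geq 2$, maximality forces
\[
A\otimes_R\hat{R}_\frakp\;\cong\;\prod_{i=1}^{t_\frakp}\nMat{\calO_{D_i}}{n_i}.
\]
Proposition~\ref{PR:projective-decomp}, applied with $r=1$ in each factor, then shows that any finitely generated projective module over such a ring is determined up to isomorphism by a tuple of non-negative integers, and these integers are recovered from the multiplicities of simple summands in the rationalization. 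Hence $P_F\cong P'_F$ yields $P\otimes_R\hat{R}_\frakp\cong P'\otimes_R\hat{R}_\frakp$, as required.

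The main obstacle is precisely the module-isomorphism step $P\cong P'$: maximality enters essentially here, because for a merely hereditary order the indecomposable projectives $V_1,\ldots,V_r$ of Proposition~\ref{PR:projective-decomp} rationalize to isomorphic $A_F$-modules while remaining pairwise non-isomorphic over $A$. This is exactly why Theorem~\ref{TH:non-unimodular-forms}(i) must be stated with the common base module $P$, and why its natural extension to differing base modules requires the additional maximality hypothesis. Once $P\cong P'$ has been obtained by the completion-and-descent argument above, the transport trick combined with the already-proved Theorem~\ref{TH:non-unimodular-forms}(i) completes the proof.
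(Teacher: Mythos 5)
Your proposal is correct and follows essentially the same route as the paper: the paper isolates the statement $P_F\cong P'_F\Rightarrow P\cong P'$ for maximal orders as Lemma~\ref{LM:rational-iso-of-modules}, proved by reducing to the complete local case via Proposition~\ref{PR:etale-extsions-of-modules} and then invoking the structure theorem to see that maximality forces each factor to be a full matrix ring over $\calO_{D_i}$, and then deduces the theorem from that lemma together with Theorem~\ref{TH:non-unimodular-forms}(i), exactly as in your transport-of-structure step. The only cosmetic difference is that the paper finishes the complete-DVR case by a Morita reduction to the local ring $\calO_D$ (so projectives are free), whereas you invoke Proposition~\ref{PR:projective-decomp} with $r=1$ directly; both give the same thing.
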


    The theorem
    follows from Theorem~\ref{TH:non-unimodular-forms}(i) and the following lemma:

    \begin{lem}\label{LM:rational-iso-of-modules}
        Let $A$ be  a maximal $R$-order and let $P,Q\in \rproj{A}$.
        Then $P_F\cong Q_F$ (as $A_F$-modules) if and only if $P\cong Q$.
    \end{lem}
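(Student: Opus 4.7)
The direction ($P\cong Q \Longrightarrow P_F\cong Q_F$) is immediate from the fact that base change along $R\to F$ is a functor. The plan for the converse is to reduce to the case where $R$ is a complete DVR by a faithfully-flat descent argument, and then use the structure theorem for maximal orders to show that finitely generated projective modules are determined by their rational type.

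First I would reduce to $R$ a complete DVR. For each $0\neq\frakp\in\Spec R$, Theorem~\ref{TH:local-hereditary} guarantees that $A_{\hat{R}_\frakp}$ is again a maximal $R$-order (over $\hat{R}_\frakp$), and the assumption $P_F\cong Q_F$ gives, after tensoring with $\hat{F}_\frakp$ over $F$, an isomorphism $P_{\hat{F}_\frakp}\cong Q_{\hat{F}_\frakp}$. Granted the lemma over complete DVRs, this yields $P_{\hat{R}_\frakp}\cong Q_{\hat{R}_\frakp}$ for every nonzero prime $\frakp$. Since $S:=\prod_{0\neq\frakp\in\Spec R}\hat{R}_\frakp$ is faithfully flat over $R$ (recall $R$ is a semilocal PID, so this is a finite product), Proposition~\ref{PR:etale-extsions-of-modules} upgrades this to $P\cong Q$.

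So assume $R$ is a complete DVR. By Theorem~\ref{TH:structure-of-hered-rings} and the fact that maximal orders are hereditary (Theorem~\ref{TH:separable-maximal-hereditary}), we may write $A\cong\prod_{i=1}^t\calO_{D_i}^{[\hat{n}^{(i)}]}$. Maximality forces each tuple $\hat{n}^{(i)}$ to have length one, i.e.\ $A\cong\prod_{i=1}^t\nMat{\calO_{D_i}}{n_i}$ — any hereditary order with a longer tuple sits properly inside $\nMat{\calO_{D_i}}{n_1+\dots+n_{r_i}}$. Via Morita equivalence it therefore suffices to treat the case $A=\calO_D$ for a finite-dimensional division algebra $D/F$. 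In that case $\calO_D$ is a (noncommutative) local ring (its unique maximal ideal being $\frakm_D$), so by Lemma~\ref{LM:mod-jacob-radical} every finitely generated projective right $\calO_D$-module is free of uniquely determined rank — equivalently, the rank is recovered as $\dim_D P_F$. Assembling the $t$ factors, $P$ is determined up to isomorphism by the tuple of ranks $(\dim_{D_i}(P_F\cdot 1_i))_{i=1}^t$, and this tuple coincides with the corresponding tuple for $Q$ because $P_F\cong Q_F$. Thus $P\cong Q$.

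The only substantive point is the structural reduction in the second paragraph; once the maximal order is decomposed as a product of matrix rings over $\calO_{D_i}$, the statement is just unique factorization of projectives over a local ring. I do not foresee a serious obstacle here, as every ingredient has already been assembled in Sections~\ref{section:orders} and~\ref{subsection:projectives}.
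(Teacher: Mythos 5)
Your proof is correct and follows essentially the same route as the paper's: reduce to a complete DVR via faithfully flat descent (Theorem~\ref{TH:local-hereditary} plus Proposition~\ref{PR:etale-extsions-of-modules}), use the structure theorem and maximality to write $A$ as a product of matrix rings over the $\calO_{D_i}$, then pass via Morita equivalence to $\calO_D$, where projectives are free with rank recoverable from $P_F$. The only inessential difference is that you present the descent step first and the complete-DVR case second, whereas the paper does the reverse.
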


    \begin{proof}
        We only prove the non-trivial direction.
        Suppose first that $R$ is a complete DVR. By Theorem~\ref{TH:separable-maximal-hereditary},
        $A$ is hereditary, so by Theorem~\ref{TH:structure-of-hered-rings},
        we may assume that $A=\prod_{i=1}^t\calO_{D_i}^{[\hat{n}^{(i)}]}$ (notation as in~\ref{subsection:structure-of-hered-orders}).
        Since $A$ is maximal, each of the tuples $\hat{n}^{(i)}$ must consist of a single integer, $n_i$,
        and hence $A=\prod_{i=1}^n\nMat{\calO_{D_i}}{n_i}$. By working componentwise,
        we may assume that $A=\nMat{\calO_D}{n}$ for a f.d.\ division $F$-algebra $D$.
        Furthermore, by {Morita Theory} (see \cite[\S18]{La99}), the categories
        $\rproj{A}$ and $\rproj{\calO_D}$ are equivalent, so we may further assume
        that $A=\calO_D$. Now, $A$ is local, so $P$ and $Q$ are free,
        say $P\cong A^n$ and $Q\cong A^m$. The assumption $P_F\cong Q_F$
        implies $n=m$ (because $A_F=D$ is a division ring), so $P\cong Q$.

        For general $R$, we have $P_{\hat{R}_\frakp}\cong Q_{\hat{R}_\frakp}$
        as $A_{\hat{R}_\frakp}$-modules
        by the previous paragraph (the $\hat{R}_{\frakp}$-order $A_{\hat{R}_{\frakp}}$ is
        maximal by Theorem~\ref{TH:local-hereditary}). Since $\prod_{0\neq\frakp\in\Spec R} \hat{R}_\frakp$
        is faithfully flat over $R$, Proposition~\ref{PR:etale-extsions-of-modules} implies
        that $P\cong Q$.
    \end{proof}

\section{A Cohomological Result}
\label{section:cohomological}

    In this section, we derive a cohomological result from Theorem~\ref{TH:forms-over-hereditary-has-genus-one} which
    is in the spirit of the Grothendieck--Serre conjecture (see the introduction).
    However, the algebraic groups involved are not necessarily reductive.
    In Section~\ref{section:BT}, we show that these group schemes are strongly related to group schemes
    constructed by Bruhat and Tits in \cite{BruhatTits84II}.


    Throughout, $R$ is a semilocal PID with $2\in\units{R}$ and
    $F$ is the fraction field of $R$. In addition,  $A$ is a \emph{hereditary} $R$-order,
    $\sigma:A\to A$ is an $R$-involution, and  $u\in\Cent(A)$ is an element satisfying $u^\sigma u=1$.
	Recall from \ref{subsection:orders} that $A_F$ is semisimple. For $\frakp\in\Spec R$, the fraction field
	of $R/\frakp$  is denoted $k(\frakp)$.

\medskip

    Let $(P,f), (P',f')\in\Herm[u]{A,\sigma}$.
	As usual, an $R$-algebra $S$ is called \emph{fppf} if $S$ is finitely presented as an $R$-algebra
	and
	flat as an $R$-module, and it is called \emph{\'etale} if
	in addition
	$S\otimes_R k(\frakp)$ is a finite product of separable field
	extensions of  $k(\frakp)$  for all $\frakp\in\Spec R$.
    We say that $(P',f')$ is an \emph{\'{e}tale form} (resp.\ \emph{fppf form}) of
    $(P,f)$ if there exists a faithfully flat \'{e}tale (resp.\ fppf) $R$-algebra $S$ such
    that $(P_S,f_S)\cong (P'_S,f'_S)$.
    The following propositions are well-known
    in the case $A=R$.

    \begin{prp}\label{PR:torsors}
        Fix $(P,f)\in\Herm[u]{A,\sigma}$ and let $\uU(f)$
        be the group scheme of isometries of $f$
        (see~\ref{subsection:hermitian-forms}). There is a one-to-one correspondence between:
        \begin{enumerate}
        	\item[{\rm(a)}] $\rmH^1_{\et}(R,\uU(f))$,
        	\item[{\rm(b)}] \'{e}tale forms of  $(P,f)$, considered up to isomorphism,
        	\item[{\rm(c)}] $\rmH^1_{\fppf}(R,\uU(f))$,
        	\item[{\rm(d)}] fppf forms of  $(P,f)$, considered up to isomorphism.
        \end{enumerate}
        This correspondence is compatible with scalar extension. Furthermore, the correspondence between {\rm(b)} and {\rm(d)}
        is given by mapping isomorphism classes to themselves, so any fppf form of $(P,f)$ is also an \'etale form.
    \end{prp}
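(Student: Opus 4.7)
The plan is to prove the proposition in two stages. First, establish the bijections (a)$\leftrightarrow$(b) and (c)$\leftrightarrow$(d) by the standard twisted-forms/descent dictionary. Then show that the canonical comparison map $\rmH^1_{\et}(R, \uU(f)) \to \rmH^1_{\fppf}(R, \uU(f))$ is a bijection; combined with the first stage, this forces every fppf form to be an \'etale form.

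For the first stage, fix $(P,f)\in\Herm[u]{A,\sigma}$ and consider the fibered category $\catH$ over the site of fppf (resp.\ \'etale) $R$-algebras whose objects over $S$ are unimodular $u$-hermitian spaces over $(A_S,\sigma_S)$ with base module projective of the appropriate rank, and whose morphisms are isometries. The automorphism sheaf of $(P,f)$ in $\catH$ is exactly $\uU(f)$ by definition (see~\ref{subsection:hermitian-forms}). I would first verify that $\catH$ is a stack for both topologies. This amounts to faithfully flat descent for the pair (projective $A$-module, hermitian form): descent for finitely generated projective right $A$-modules follows from fppf descent for quasi-coherent modules over $\Spec R$ (together with the $A$-action, which descends since $A$ is $R$-finite), and descent of the form itself reduces to descent of a morphism $P_S\to P_S^*$ of $A_S$-modules. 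Once $\catH$ is a stack, the usual torsor-versus-twisted-form dictionary (cf.\ \cite[Ch.~III]{Kn91} for the classical case, which carries over verbatim to this setting since the argument is purely formal once descent is granted) produces the bijections (a)$\leftrightarrow$(b) and (c)$\leftrightarrow$(d), by sending a torsor $T$ to the twist $T \wedge^{\uU(f)}(P,f)$. Compatibility with scalar extension is built into the construction.

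For the second stage, recall from~\ref{subsection:hermitian-forms} that because $f$ is unimodular, $\uU(f)$ is a \emph{smooth} affine group scheme over $R$. A classical theorem of Grothendieck (see SGA 3 Exp.\ XXIV, or Milne, \emph{\'Etale Cohomology}, Th.~III.3.9) asserts that for any smooth affine $R$-group scheme $\bfG$ the comparison map $\rmH^1_{\et}(R,\bfG)\to \rmH^1_{\fppf}(R,\bfG)$ is bijective. Applying this to $\uU(f)$ and combining with the bijections from the first stage yields that the natural map from \'etale forms to fppf forms (given by the identity on isomorphism classes) is itself a bijection, which is the final assertion. Compatibility with scalar extension of the combined correspondence follows from compatibility of each of the ingredients (descent, twisting, and the comparison map) with scalar extension.

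The main technical obstacle is not the cohomological comparison, which is standard once smoothness is known, but the verification that $\catH$ is an fppf stack over $\Spec R$ in the noncommutative setting. Concretely, one must show that given a faithfully flat fppf $R$-algebra $S$, a unimodular hermitian space $(P',f')$ over $(A_S,\sigma_S)$ equipped with a descent datum relative to $S\otimes_R S$ descends to a hermitian space over $(A,\sigma)$. This reduces to effective descent of the projective $A$-module $P'$ (a consequence of fppf descent of quasi-coherent modules with an $R$-linear $A$-action, using that $A\in\rproj{R}$) and of the morphism $f'_\ell:P'\to (P')^*$ (descent of morphisms of modules), both of which are standard. Once this is in place, the rest of the argument is formal.
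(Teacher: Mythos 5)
Your proposal is correct and takes essentially the same route as the paper: both establish the bijections (a)$\leftrightarrow$(b) and (c)$\leftrightarrow$(d) via the standard twisted-forms/descent dictionary (reducing the only non-formal point to faithfully flat descent of finitely generated projective $A$-modules and hermitian forms on them), and both then identify (b) with (d) by invoking the comparison isomorphism $\rmH^1_{\et}(R,\uU(f))\cong\rmH^1_{\fppf}(R,\uU(f))$, which holds because $\uU(f)$ is smooth and affine over $R$. The paper is slightly more explicit on the one non-routine descent fact — that the descended module is f.g.\ projective over $A$, not merely over $R$ — for which it adapts an argument of Lam using that $\Hom_{A_S}(M_S,N_S)\cong\Hom_A(M,N)_S$ for flat $S$ and finitely presented $M$; your appeal to ``$A\in\rproj{R}$'' gestures at the same point but would benefit from that extra sentence of justification.
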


    \begin{proof}
        The correspondence between (a) and (b), resp.\ (c) and (d),
        is standard and its proof follows the same lines as \cite[pp.\ 110--112, 117ff.]{Kn91}, for instance.
        The only additional thing to check is that faithfully flat descent of $A$-modules
        preserves the property of being finitely generated projective over $A$.
        To show this, one can argue as in \cite[Pr.~4.80(2)]{La99}; the proof  extends from $R$-modules
        to $A$-modules once
        noting that $\Hom_{A_S}(M_S,N_S)\cong \Hom_A(M,N)_S$ whenever $M$ is a finitely presented
        $A$-module and $S$ is a flat $R$-algebra (see for instance \cite[Th.~2.38]{MaximalOrders}).

        Upon identifying (a) with (b) and (c) with (d) as above, the map from (b) to (d) sending an isomorphism
        class to itself corresponds to the canonical map $\rmH^1_{\et}(R,\uU(f))\to \rmH^1_{\fppf}(R,\uU(f))$,
        and this map is an isomorphism because $\uU(f)$ is smooth over $ R$;
        see
        \cite[Apx.]{BayFi14} for the smoothness (note that $2\in\units{R}$)
        and \cite[Th.~11.7(1), Rm.~11.8(3)]{Groth68}
        for the isomorphism of the cohomologies.
    \end{proof}

    \begin{prp}\label{PR:etale-forms-crit}
    	Let $(P,f), (P',f')\in\Herm[u]{A,\sigma}$. Then $(P',f')$ is an \'etale (resp.\ fppf) form of $(P,f)$
    	if and only if $P\cong P'$.
    \end{prp}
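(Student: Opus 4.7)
The easy direction, that an \'etale (or fppf) form has isomorphic base module, is immediate: an isomorphism $(P_S,f_S)\cong(P'_S,f'_S)$ over a faithfully flat \'etale (or fppf) $R$-algebra $S$ restricts to $P_S\cong P'_S$ as $A_S$-modules, and Proposition~\ref{PR:etale-extsions-of-modules} descends this to $P\cong P'$.

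For the converse, the plan is to assume $P=P'$ (by absorbing the module isomorphism into $f'$) and to produce a faithfully flat fppf $R$-algebra $S$ over which $(P,f)$ and $(P,f')$ become isometric; Proposition~\ref{PR:torsors} then upgrades this fppf form to an \'etale form automatically, since $\uU(f)$ is smooth. Via transfer (Proposition~\ref{PR:transfer-into-edomorphism-ring} and Remark~\ref{RM:transfer-is-OK-with-scalar-ext}), with $E=\End_A(P)$ and $\tau$ the involution adjoint to $f$, the form $f'$ corresponds to some $a\in\units{\Sym}(E,\tau)$, and the task becomes to find such an $S$ together with a $v\in\units{(E_S)}$ satisfying $v^\tau v=a$. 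To this end, I will introduce the affine $R$-scheme $Y$ with $Y(S)=\{v\in E_S:v^\tau v=a\}$, cut out in the affine $R$-space on $E$ by a closed condition; equivalently, $Y$ is the scheme-theoretic fibre over the $R$-point $a$ of the morphism $\mu\colon v\mapsto v^\tau v$ from the unit group scheme of $E$ to the closed subscheme $\Sigma\subseteq E$ of $\tau$-symmetric elements.

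Smoothness of $Y\to\Spec R$ reduces to smoothness of $\mu$, which I verify by a direct tangent computation. Writing a tangent vector at $v_0$ as $v=v_0(1+\varepsilon\delta)$ modulo $\varepsilon^2$ and setting $a_0:=v_0^\tau v_0$, one finds $d\mu_{v_0}(\delta)=a_0\delta+\delta^\tau a_0$. Since $2\in\units{R}$, this linear map $E\to\Sigma$ is surjective (any $s\in\Sigma$ is hit by $\delta=\tfrac{1}{2}a_0^{-1}s$), so $\mu$ is smooth, and hence so is $Y=\mu^{-1}(a)\to\Spec R$ by base change.

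The main obstacle will be to show that $Y\to\Spec R$ is surjective---equivalently, that $Y(\overline{k(\frakp)})\neq\emptyset$ for every $\frakp\in\Spec R$. Translated back through the transfer, this is the claim that over the algebraic closure $\bar{k}$ of every residue field of $R$, the unimodular $u$-hermitian spaces $(P_{\bar{k}},f_{\bar{k}})$ and $(P_{\bar{k}},f'_{\bar{k}})$ are isometric. Since $A_{\bar{k}}$ is finite-dimensional over $\bar{k}$, its Jacobson radical is nilpotent, so the lifting arguments of~\ref{subsection:herm-over-ord} (which require only completeness in the Jacobson-adic topology, automatic here) reduce the question to the analogous isomorphism over the semisimple quotient $A_{\bar{k}}/\Jac(A_{\bar{k}})$; the classification recalled there---via Morita equivalence, reducing to symmetric or alternating bilinear forms over $\bar{k}$, each determined by rank over an algebraically closed field---then supplies the required isometry. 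Once surjectivity is established, $Y$ is an affine, smooth, surjective $R$-scheme, so $S:=\calO(Y)$ is a faithfully flat finitely presented $R$-algebra and the tautological point $\id_S\in Y(S)$ produces the sought-after $v\in\units{(E_S)}$ with $v^\tau v=a$.
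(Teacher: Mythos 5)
Your proof is correct. For the nontrivial direction the paper simply cites \cite[Pr.~A.1]{BayFi14}; you instead supply a self-contained argument---almost certainly the content of that citation---via the standard torsor construction: transfer to $(E,\tau)$ with $E=\End_A(P)$, form the smooth affine $R$-scheme $Y=\mu^{-1}(a)$ inside $\uGL_1(E)$, and show $Y\to\Spec R$ is surjective because over an algebraic closure $\bar k$ of each residue field of $R$ (including $F$ itself) two unimodular $u$-hermitian forms over $A$ with the same base module are isometric, by lifting along the nilpotent $\Jac(A_{\bar k})$ and then using the Morita reduction over $A_{\bar k}/\Jac(A_{\bar k})$ to symmetric, alternating, or exchange-type forms, each determined by rank. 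The one point worth spelling out is that your first description of $Y$---the closed subscheme of the affine $R$-space on $E$ cut out by $v^\tau v=a$---a priori only makes the universal $v\in E_S$ left-invertible (since $a^{-1}v^\tau\cdot v=1$); to identify this with the fibre inside $\uGL_1(E)$ and so get $v\in\units{(E_S)}$, observe that $E_S$ is a finitely generated projective $S$-module, so left multiplication by $v$ is a split $S$-linear monomorphism of a finitely generated projective module and hence an isomorphism, forcing $v$ to be a unit. With that in place, your tangent computation (surjectivity via $\delta=\tfrac12 a_0^{-1}s$, using $2\in\units R$) and the passage from smooth and surjective to faithfully flat of finite presentation are both correct, and Proposition~\ref{PR:torsors} then upgrades the fppf trivialization to an \'etale one as you say.
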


    \begin{proof}
    	By Proposition~\ref{PR:torsors}, it is enough to prove the proposition for fppf forms.
    	The $(\Longrightarrow)$ direction follows from Proposition~\ref{PR:etale-extsions-of-modules},
    	and the $(\Longleftarrow)$ direction follows from  \cite[Pr.~A.1]{BayFi14} (note that $2\in\units{R}$).
    \end{proof}

	Using Propositions~\ref{PR:torsors} and~\ref{PR:etale-forms-crit}, we
	restate Theorem~\ref{TH:forms-over-hereditary-has-genus-one} in the language
	of \'{e}tale (or fppf) cohomology.
	Notice that  $A$ has to be hereditary
    (cf.\ Remark~\ref{RM:arbitrary-orders}).

    \begin{thm}\label{TH:main-coh}
        The map $\rmH^1_\et(R,\uU(f))\to \rmH^1_\et(F,\uU(f))$ is injective.
    \end{thm}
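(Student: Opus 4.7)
The plan is to reduce Theorem~\ref{TH:main-coh} to Theorem~\ref{TH:forms-over-hereditary-has-genus-one} using the dictionary between torsors and forms provided by Propositions~\ref{PR:torsors} and~\ref{PR:etale-forms-crit}. Thanks to these two propositions, all the substantive content is already in place; what remains is essentially a bookkeeping argument.

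First I would use Proposition~\ref{PR:torsors} to identify $\rmH^1_\et(R,\uU(f))$ with the set of isomorphism classes of \'etale forms of $(P,f)$, and $\rmH^1_\et(F,\uU(f))$ with the set of isomorphism classes of \'etale forms of $(P_F,f_F)$. The compatibility with scalar extension asserted in Proposition~\ref{PR:torsors} ensures that the restriction map in cohomology corresponds to the assignment $[(P',f')]\mapsto [(P'_F,f'_F)]$ on isomorphism classes.

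Now suppose $\xi_1,\xi_2\in\rmH^1_\et(R,\uU(f))$ have the same image in $\rmH^1_\et(F,\uU(f))$, and let $(P_1,f_1),(P_2,f_2)\in\Herm[u]{A,\sigma}$ be representing \'etale forms. The hypothesis means that $(P_{1,F},f_{1,F})\cong(P_{2,F},f_{2,F})$ as unimodular $u$-hermitian spaces over $(A_F,\sigma_F)$. By Proposition~\ref{PR:etale-forms-crit}, both $P_1$ and $P_2$ are isomorphic to $P$ as $A$-modules; in particular, there is an $A$-module isomorphism $\phi:P_1\to P_2$. Transport $f_1$ along $\phi$ to obtain a unimodular $u$-hermitian form $f_1'$ on $P_2$ with $(P_1,f_1)\cong (P_2,f_1')$. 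Then $f_1'$ and $f_2$ are two unimodular $u$-hermitian forms on the single $A$-module $P_2\in\rproj{A}$ satisfying $(P_{2,F},f'_{1,F})\cong (P_{2,F},f_{2,F})$. Applying Theorem~\ref{TH:forms-over-hereditary-has-genus-one} with $P_2$ in place of $P$ yields $(P_2,f_1')\cong (P_2,f_2)$, hence $(P_1,f_1)\cong(P_2,f_2)$, so $\xi_1=\xi_2$.

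Since every nontrivial step is covered by a result already stated in the paper, no real obstacle remains; the only thing to verify carefully is that the bijections of Proposition~\ref{PR:torsors} are compatible with the restriction $R\to F$ (which is recorded in that proposition) and that Proposition~\ref{PR:etale-forms-crit} lets one identify the base modules before invoking Theorem~\ref{TH:forms-over-hereditary-has-genus-one}, whose hypothesis requires both forms to live on the \emph{same} module.
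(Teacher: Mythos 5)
Your proposal is correct and is essentially the paper's own argument: the paper dismisses the proof with the single remark that Theorem~\ref{TH:main-coh} is Theorem~\ref{TH:forms-over-hereditary-has-genus-one} restated via Propositions~\ref{PR:torsors} and~\ref{PR:etale-forms-crit}, and you have simply spelled out the bookkeeping (identify $\rmH^1_\et$ with isomorphism classes of \'etale forms, use Proposition~\ref{PR:etale-forms-crit} to place both representatives on the same base module $P$, then invoke Theorem~\ref{TH:forms-over-hereditary-has-genus-one}). No gaps.
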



	We stress that the neutral component of $\uU(f)$,
	denoted $\uU(f)^0$, is not always reductive, so
    Theorem~\ref{TH:main-coh} does not follow from the Grothendieck--Serre conjecture.
	
	More precisely,
	by \cite[Apx.]{BayFi14}, $\uU(f)\to \Spec R$ is smooth
	and finitely presented,
	hence by \cite[Cor.~15.6.5]{Groth67EGAiv}
	(see also \cite[\S3.1]{Conrad14}), one may form $\uU(f)^0\to\Spec R$,
	the neutral
	component of $\uU(f)\to \Spec R$. It is
	the unique open  subscheme of $\uU(f)$ with the property
	that $(\uU(f)^0)_{k(\frakp)}=(\uU(f)_{k(\frakp)})^0$ for all $\frakp\in \Spec R$;
	here, the subscript $k(\frakp)$ denotes base change from $ R$ to $k(\frakp)$,
	and $(\uU(f)_{k(\frakp)})^0$ is the usual neutral component of the affine group $k(\frakp)$-scheme $\uU(f)_{k(\frakp)}$.
%
%
    According to \cite[Df.~3.1.1]{Conrad14}, a  group $R$-scheme
    $\bfG\to \Spec R$ is
    reductive if it is affine, smooth, and its geometric fibers
    are connected reductive algebraic groups
    (here we follow the convention that reductive group schemes are assumed to have connected fibers).
    However, the  example below shows that the closed fibers of $\uU(f)^0\to \Spec R$
    may be non-reductive.
	Analyzing precisely when this happens seems complicated.
	
    Nevertheless, we note that
    when $A_{k(\frakp)}$ is separable over $k(\frakp)$,
    the fiber $\uU(f)_{k(\frakp)}^0$
    is always a classical  reductive algebraic group over $k(\frakp)$.
    Indeed, using Remark~\ref{RM:transfer-is-OK-with-scalar-ext}, we find that $\uU(f)_{k(\frakp)}=\uU(f_{k(\frakp)})=\uU(E,\tau)$
    where $E=\End_{k(\frakp)}(P_{k(\frakp)})$ is a separable $k(\frakp)$-algebra and $\tau:E\to E$
    is a $k(\frakp)$-involution.
    Thus, when $A$ is a separable  $R$-order (cf.~\ref{subsection:orders}),
    the group scheme $\uU(f)^0\to \Spec R$ is reductive.
    When $A$ is a general hereditary order, the generic fiber
    $\uU(f_F)^0\to \Spec F$ is   \emph{pseudo-reductive},
    since $A_F$ is semisimple.

    \begin{example}\label{EX:reductive}
        Assume $R$ is a DVR with maximal ideal $\frakm=\pi R$
        and write $k=R/\frakm$. Let $A=\smallSMatII{R}{\frakm}{R}{R}$,
        and let $\sigma:A\to A$ be defined by $\sigma\smallSMatII{a}{\pi b}{c}{d}=\smallSMatII{a}{\pi c}{b}{d}$.
        Then $A$ is hereditary by Theorems~\ref{TH:local-hereditary} and~\ref{TH:structure-of-hered-rings}.
        Consider the $1$-hermitian form $f_1:A\times A\to A$ given by $f_1(x,y)=x^\sigma y$.
        It is easy to see that $\uU(f_1)\cong \uU(A,\sigma)$. The fiber  $\uU(A_F,\sigma_F)^0\to\Spec F$ is a
        well-known to be an $F$-torus of rank $1$, and hence reductive. However, $\uU(A_k,\sigma_k)^0\to\Spec k$ is not reductive.
        Indeed, as a $k$-algebra, $A_k=\smallSMatII{R/\frakm}{\frakm/\frakm^2}{R/\frakm}{R/\frakm}$ is isomorphic to $\nMat{k}{2}$ endowed with the
        multiplication $\smallSMatII{a}{b}{c}{d}\cdot\smallSMatII{x}{y}{z}{w}=
        \smallSMatII{ax}{ay+bw}{cx+dz}{dw}$,  and under this isomorphism, $\sigma_k$ becomes
        $\smallSMatII{a}{b}{c}{d}\mapsto\smallSMatII{a}{c}{b}{d}$.
        A straightforward computation now shows that
        $\uU(A_k,\sigma_k)^0$ is isomorphic to the additive group $\bfG_{\mathbf{a},k}$
        via $\smallSMatII{1}{b}{-b}{1}\mapsto b$ (on sections),
        so $\uU(A_k,\sigma_k)^0\to\Spec k$ is not reductive. In particular, $\uU(f_1)^0\to \Spec R$ is not reductive.

        On the other hand, if we replace $\sigma$ with the involution $\smallSMatII{a}{\pi b}{c}{d}\mapsto\smallSMatII{d}{\pi b}{c}{a}$,
        then a similar computation shows that  $\uU(f_1)^0\to \Spec R$ is reductive.
        In fact, the multiplicative group $\bfG_{\mathbf{m},R}$ is isomorphic to $\uU(A,\sigma)^0$ via
        $a\mapsto \smallSMatII{a}{0}{0}{a^{-1}}$ (on sections).
    \end{example}


\section{Relation with Bruhat--Tits Theory}
\label{section:BT}

	Let $R$, $F$, $A$, $\sigma$, $u$
	be as in Section~\ref{section:cohomological}, and assume that $A_F$ is a separable
    $F$-algebra.
    We also assume that  $R/\frakp$
	is perfect for all $0\neq \frakp\in\Spec R$.
	
	Let $(P,f)\in \Herm[\veps]{A,\sigma}$
	and let $\bfG$ denote the algebraic group $\uU(f_F)$ over $F$.
	Then by Theorem~\ref{TH:main-coh}, the map
	$
	\rmH^1_\et(R,\uU(f))\to \rmH^1_\et(F,\bfG)
	$	
	is injective.
	It is natural to ask for a characterization of
	the group $R$-schemes $\uU(f)$  which does not depend
	on the presentation of $\bfG$ as $\uU(f_F)$.
	In this section we provide such a characterization for the neutral component of
	$\uU(f)$ by relating it  with group schemes associated to   $\bfG$  by Bruhat and Tits.
    This suggests an extension of the Grothendieck--Serre conjecture for regular local rings of dimension $1$.

\medskip

	We begin by reformulating  the problem:
	By Remark~\ref{RM:transfer-is-OK-with-scalar-ext}, we have
	$\uU(f)\cong \uU(E,\tau)$,
	where $(E,\tau)$ is the  $R$-order with involution
	defined  in Proposition~\ref{PR:transfer-into-edomorphism-ring}.
	By applying Lemma~\ref{LM:endo-ring-of-morph-with-semisimple-coker} to $(P,\id,P)$, we see that $E$ is hereditary,
	and it is easy to see that $B:=E_F$ is a separable $F$-algebra.
	Conversely, if $E$ is any hereditary order in $B$ that is stable under $\tau$,
	then $\uU(E,\tau)\cong \uU(f_1)$ where $f_1:E\times E\to E$
	is the $1$-hermitian form given by $f_1(x,y)=x^\tau y$.
    %
	%
    %
	It is therefore enough to fix
	a separable $F$-algebra $B$, an $F$-involution $\tau:B\to B$, and consider  the group schemes $\uU(E,\tau)$
	as $E$ ranges over the $\tau$-stable hereditary orders in $B$.
	
\medskip
	
	We introduce further notation:
	Since $B$ is semisimple,
	we can factor $(B,\tau)$ as
	$\prod_{i=1}^t(B_i,\tau_i)$ where  $B_i$ is either simple artinian,
	or $B_i\cong B'_i\times B'^\op_i$ with $B'_i$ simple artinian and $\tau_i$
	exchanges $B'_i$ and $B'^\op_i$.
	
	Let $E$ be a $\tau$-stable hereditary order in $B$.
	By \cite[Th.~40.7]{MaximalOrders},  $E$ factors
	as  $\prod_{i=1}^tE_i$, where  $E_i$ is a hereditary $R$-order in $B_i$, hence $\uU(E,\tau)=\prod_i\uU(E_i,\tau_i)$.
	Fix some $1\leq i\leq t$.
	It is well-known that $\uU(B_i,\tau_i)$ is connected unless $B_i$ is simple and
	$\tau_i$ is an orthogonal involution. When $\tau_i$ is orthogonal,
	the neutral component $\uU(B_i,\tau_i)^0$
	is given as the scheme-theoretic kernel of the reduced norm map
	\[
	\mathrm{Nrd}_{B_i/K_i}:\uU(B_i,\tau_i)\to \calR_{K_i/F}\boldsymbol{\mu}_{2,K_i}\ ,
	\]
	where $K_i=\Cent(B_i)$ and
	$\calR_{K_i/F}$ is the Weil restriction from $K_i$ to $F$.
	The map $\mathrm{Nrd}_{B_i/K_i}$
	extends uniquely to a morphism $\uU(E_i,\tau_i)\to \calR_{S_i/R}\boldsymbol{\mu}_{2,S_i}$,
	where $S_i$ is the integral closure of $R$ in $K_i$,
	and we denote its scheme-theoretic kernel  by
	\[
	{\uU}(E_i,\tau_i)^\diamond\ .
	\]
	When $\tau_i$ is not orthogonal,  we define $\uU(E_i,\tau_i)^\diamond$
	to be $\uU(E_i,\tau_i)$. Finally, we set
	\[
	\uU(E,\tau)^\diamond =\prod_i \uU(E_i,\tau_i)^\diamond
	\]
	The group scheme
	$\uU(E,\tau)^\diamond$ is open in $\uU(E,\tau)$, hence it is smooth
	over $\Spec R$. It is in general larger than $\uU(E,\tau)^0$.

\medskip

	We now recall some  facts from the works of Bruhat and Tits on
	reductive algebraic groups over valuated fields.
	\emph{Throughout the discussion,  $R$ is a  henselian DVR
	and $\bfG$ is a (connected) reductive algebraic group over $F$.}
	The strict henselization of $R$ is denoted $R^\sh$
	and its fraction field is $F^\sh$.
	Our standing assumption that the residue field
    of $R$ is perfect is necessary for some of the facts that we shall cite, and also saves  some
    technicalities.
	
	In \cite{BruhatTits72I} (see also  also
	\cite{Tits79}),
	Bruhat and Tits associate with $\bfG$ a metric space $\calB(\bfG,F)$,
	on which $\bfG(F)$ acts via isometries,
	called the   \emph{extended affine Bruhat--Tits building} of $\bfG$.\footnote{
		We alert the reader that many texts also  consider the \emph{non-extended} building of $\bfG$.
		In this paper, however, the term ``building'' always means ``extended building''. The distinction between
		these two concepts is unnecessary when $\bfG$ is semisimple.
	}
	The formation of $\calB(\bfG,F)$ is functorial relative to Galois extensions in the sense
	that if $K/F$ is a Galois extension, then $\calB(\bfG,F)$ embeds in  $\calB(\bfG,K)$ as $\bfG(F)$-sets,
	$\Gal(K/F)$ acts isometrically on $\calB(\bfG,K)$ while fixing $\calB(\bfG,F)$, and when $K/F$ is unramified,
	the fixed point set of $\Gal(K/F)$ is precisely $\calB(\bfG,F)$ (\cite[\S2.6]{Tits79}).
	Furthermore, any automorphism of $\bfG$ gives rise to an automorphism
    of $\calB(\bfG,F)$, and if $\bfG=\bfG_1\times\bfG_2$, then $\calB(\bfG,F)= \calB(\bfG_1,F)\times\calB(\bfG_2,F)$.

	The building $\calB(\bfG,F)$ carries a partition into  facets, which is respected by the action of $\bfG(F)$.
	More precisely, when $\bfG$ is simple, $\calB(\bfG,F)$ has the structure of a simplicial
	complex, whereas in general, letting   $\bfG_1,\dots,\bfG_r$ denote the absolutely simple factors
	of $\bfG^0$ and $s$ be the split rank of the center of $\bfG$,
	we have $\calB(\bfG,F)\cong \calB(\bfG_1,F)\times\dots\times\calB(\bfG_r,F)\times \R^s$, and a facet of $\calB(\bfG,F)$
	consists of a Cartesian product $C_1\times\dots\times C_r\times \R^s$ with $C_i$ a facet of $\calB(\bfG_i,F)$.
    In addition, any two points in $\calB(\bfG,F)$ can be joined by a unique geodesic segment; see \cite[\S2.2]{Tits79}.

	For every $y\in\calB(\bfG,F)$, write
	$
	G_y=\Fix_{\bfG(F)}(y):=\{g\in \bfG(f)\suchthat gy=y\}
	$.
	Replacing $F$ with $F^\sh$, we define $\tilde{G}_y\subseteq \bfG(F^\sh)$
	similarly.
	We note that $\tilde{G}_y$ determines the facet $C$ of $\calB(\bfG,F^\sh)$ containing $y$; it is the unique
	facet containing all points fixed by $\tilde{G}_y$ (\cite[Cor.~5.1.39]{BruhatTits84II} or \cite[\S 1.7]{BruhatTits87III}).
	By \cite[\S{}3.4.1]{Tits79}, there exists
	a  smooth affine group $R$-scheme
	$\catG_y$
    whose generic fiber is $\bfG$  such that $\catG_y(\tilde{R})=\tilde{G}_y$;
    these properties determine $\catG_y$
    up to an isomorphism respecting the identification $\catG_{y,F}=\bfG$
    (cf.\ \cite[1.7.6, {1.7.3(\mbox{a-a1})}]{BruhatTits84II}).
    We call $\catG_y$ a \emph{point stabilizer} group scheme of $\bfG$.
    The groups $\catG_y^0(R)$ are known as the \emph{parahoric subgroups} of $\bfG(F)$ (\cite[Df.~5.2.6]{BruhatTits84II}).
    We therefore call $\catG_y^0$ a \emph{parahoric} group scheme of $\bfG$.

\medskip

	We now give an alternative description
	of the groups $\catG_y$ when $\bfG=\uU(B,\tau)^0$.

	\begin{thm}\label{TH:parahoric}
		The point stabilizer group schemes of $\bfG:=\uU(B,\tau)^0$
		are  the group schemes  $\uU(E,\tau)^\diamond$
		where $E$ ranges over the $\tau$-stable hereditary $R$-orders in $B$.
    \end{thm}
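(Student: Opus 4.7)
The plan is to reduce to a setting where both sides factor compatibly, and then match the Bruhat--Tits description of facet stabilizers to the order-theoretic description case by case.

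First, since the formation of $\calB(\bfG,F)$, of $\catG_y$, and of $\tau$-stable hereditary $R$-orders in $B$ are all compatible with the product decomposition $(B,\tau)=\prod_i(B_i,\tau_i)$ (using $\catB(\bfG,F)=\prod_i\catB(\uU(B_i,\tau_i)^0,F)$ and \cite[Th.~40.7]{MaximalOrders} for orders), I may treat the simple factors separately. Moreover, since $\calB(\bfG,F)$ embeds Galois-equivariantly into $\calB(\bfG,F^\sh)$, and since a hereditary order in $B$ corresponds to a $\Gal(F^\sh/F)$-stable hereditary order in $B_{F^\sh}$ by \'etale descent, I may also pass to the strict henselization and assume $R$ has a separably closed (hence here algebraically closed, by the perfectness hypothesis) residue field. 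With these reductions, the claim breaks into the ``split'' cases: $B=\nMat{F}{n}$ with $\tau$ orthogonal, symplectic, or an exchange-type factor, plus the outer unitary case which becomes inner after a quadratic unramified base change, already handled by Galois descent.

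In the exchange case $(B_i,\tau_i)=(B'\times B'^\op,\mathrm{ex})$, the group $\uU(B_i,\tau_i)^0$ is isomorphic to $\mathrm{GL}_1(B')$ via projection, a $\tau_i$-stable hereditary order is just $E\times E^\op$ for a hereditary order $E$ in $B'$, and $\uU(E\times E^\op,\mathrm{ex})^\diamond$ equals the functor $S\mapsto (E_S)^\times$. For the classical ``first kind'' cases, I will invoke the lattice-chain description of the Bruhat--Tits building due to Bruhat--Tits~\cite{BruhatTits84II,BruhatTits87III} (refined by Broussous--Lemaire and Broussous--Stevens in the non-split situation): facets of $\calB(\bfG,F^\sh)$ are in bijection with similitude classes of self-dual $\calO_{F^\sh}$-lattice chains in the natural module of $B_{F^\sh}$, and such a chain is precisely the datum encoding a $\tau$-stable hereditary order $E$ in $B$ via Theorem~\ref{TH:structure-of-hered-rings}. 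Under this bijection the fixator $\tilde G_y$ of any point in the facet attached to $E$ is the group of elements of $\bfG(F^\sh)$ preserving the chain, and this group coincides with the $R^\sh$-points of $\uU(E,\tau)^\diamond$.

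Once these $R^\sh$-points are matched, the abstract characterization of $\catG_y$ in \cite[1.7.3]{BruhatTits84II} (existence of a unique smooth affine $R$-group scheme with generic fiber $\bfG$ whose $R^\sh$-points are prescribed) forces $\catG_y\cong \uU(E,\tau)^\diamond$; one needs to verify smoothness of $\uU(E,\tau)^\diamond$, which reduces (after Morita equivalence between $\calO_D^{[\hat{m}]}$ and the chain order of $\calO_D$-lattices) to the smoothness statement already cited from~\cite[Apx.]{BayFi14}, together with flatness of the reduced-norm map on the orthogonal components. The chief obstacle will be the orthogonal case: one must identify, at the level of $R^\sh$-points as well as at the level of schemes, the kernel of $\mathrm{Nrd}$ defining $\uU(E,\tau)^\diamond$ with the preimage in $\uU(E,\tau)$ of the connected component of the generic fiber, and then check that this precisely matches the scheme built by Bruhat--Tits through schematic root data and root-group dilations. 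The other case that requires care is verifying that every facet arises from some hereditary $E$ (not merely a maximal one), which again follows from the lattice-chain classification of facets.
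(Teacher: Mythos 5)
Your proposal is a genuinely different route from the paper's, and it is worth comparing the two because the difference is substantial, not cosmetic.

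What you propose: factor $(B,\tau)$, pass to $R^\sh$ (reducing the simple factors to split classical groups over a strictly henselian base), invoke the self-dual lattice-chain description of the facets of $\calB(\bfG,F^\sh)$ for classical groups (Bruhat--Tits, Broussous--Lemaire, Broussous--Stevens), match facet stabilizers to $\uU(E,\tau)^\diamond(R^\sh)$, then use the uniqueness of the smooth affine model to pin down the $R$-scheme. What the paper does instead: it never touches a self-dual lattice-chain classification for the unitary group and never reduces to the split case. It first proves Theorem~\ref{TH:building-GLn} (the $\uGL_1(B)$ case via \cite[BruhatTits84GLD]}, with $D$ an arbitrary separable division $F$-algebra), and then transfers to $\uU(B,\tau)^0$ by applying the Prasad--Yu fixed-point theorem, which identifies $\calB(\bfG,F^\sh)$ with the $\tilde\tau$-fixed locus of $\calB(\uGL_1(B),F^\sh)$ where $\tilde\tau(x)=(x^{-1})^\tau$. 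The crucial trick is then the geodesic-midpoint construction: given a $\tau$-stable hereditary $E$, one takes the point $y$ from the $\uGL_1$-classification, observes that $y$ and $\tilde\tau(y)$ have the same fixer (hence lie in the same facet), and takes the midpoint $z$ of the geodesic joining them. This $z$ is $\tilde\tau$-fixed and Galois-invariant, so it lies in $\calB(\bfG,F)$, and its fixer is computed directly as $\units{(E\otimes R^\sh)}\cap\uU(B,\tau)^0(F^\sh)=\uU(E,\tau)^\diamond(R^\sh)$. Both approaches then close with the same uniqueness argument (\cite[1.7.3]{BruhatTits84II}).

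What each approach buys: the paper's Prasad--Yu route is uniform across all first-kind cases (orthogonal, symplectic, unitary over quadratic extensions, and it handles nontrivial division algebras without ever splitting them), and it entirely sidesteps the explicit self-dual chain classification, whose full strength in the non-split/non-complete generality here would require piecing together several references. Your route is more concrete in the split case but requires (a) a descent step to come back from $R^\sh$ to $R$ for orders, schemes, and isomorphisms between them, which you state but do not verify (and which the paper avoids by keeping the orders and group schemes over $R$ throughout while only comparing $R^\sh$-points); (b) a precise citation of the self-dual chain classification covering all the relevant types, which is heavier than you suggest --- the paper explicitly notes that it avoids \cite{BruhatTits87SU} in favor of Prasad--Yu; and (c) you do not give the mechanism by which a $\tau$-stable hereditary $E$ actually produces a point of $\calB(\bfG,F)$ (the role the midpoint trick plays in the paper). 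Your closing remark about matching the kernel of $\mathrm{Nrd}$ with ``the scheme built by Bruhat--Tits through schematic root data'' is superfluous once the uniqueness theorem is in hand --- you have already invoked uniqueness, so the root-group construction never needs to be unwound. In short: a viable alternative strategy, but one that leaves the key facet-classification and descent steps at the level of citations that would need to be made considerably more precise, whereas the paper's Prasad--Yu plus midpoint argument discharges exactly these steps.
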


    Our proof is based on applying a result of Prasad and Yu  \cite[Th.~1.9]{PrasYu02} to the following theorem of Bruhat and Tits.
    The special case of \cite[Th.~1.9]{PrasYu02} that we need also follows implicitly from results in \cite{BruhatTits87SU}.

    \begin{thm}[Bruhat, Tits]\label{TH:building-GLn}
    	The point stabilizer group schemes of
		$\bfH:=\uGL_1(B)$ are  the groups $\uGL_1(E)$  where $E$
		ranges over the hereditary $R$-orders in $B$.
    \end{thm}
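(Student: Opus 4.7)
The strategy is to reduce to the case where $B = M_n(D)$ is central simple over $F$, for which the extended Bruhat--Tits building and its point stabilizers are classical and admit a concrete description in terms of $\calO_D$-lattice chains, which in turn correspond to hereditary $R$-orders in $M_n(D)$ via Theorem~\ref{TH:structure-of-hered-rings}.

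First, decompose $B = \prod_{i=1}^t B_i$ into its simple factors. Then $\bfH = \prod_i \uGL_1(B_i)$ and the extended building factors accordingly as $\calB(\bfH, F) = \prod_i \calB(\uGL_1(B_i), F)$, so a point stabilizer at $y = (y_i)_i$ is the product of the stabilizers at the $y_i$. Hereditary $R$-orders in $B$ decompose correspondingly as $E = \prod_i E_i$ with each $E_i$ a hereditary $R$-order in $B_i$ (see for instance \cite[Thm.~40.7]{MaximalOrders}), and $\uGL_1(E) = \prod_i \uGL_1(E_i)$. This reduces the theorem to $B$ simple. For $B$ simple, let $K = \Cent(B)$ and let $S$ be the integral closure of $R$ in $K$; since $B$ is separable over $F$, $K/F$ is a finite separable field extension, so $S$ is a henselian DVR (semilocal at worst, but in fact local since $R$ is henselian). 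Every hereditary $R$-order in $B$ contains $S$, so it is a hereditary $S$-order, and we have $\uGL_1(B) = \calR_{K/F}\bigl(\uGL_1(B)_K\bigr)$ where the inner $\uGL_1(B)_K$ is viewed as a $K$-group. Since the extended Bruhat--Tits building commutes with Weil restriction along an unramified\,/\,separable extension and the point stabilizer group schemes $\catG_y$ are characterized by their $R^{\sh}$-points, this reduces us to the case $F = K$, i.e.\ $B$ central simple over $F$.

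In the remaining case $B = M_n(D)$ with $D$ a central division $F$-algebra, one has $\bfH \cong \uGL_n(D)$, and the extended Bruhat--Tits building of $\uGL_n(D)$ is the classical one (see e.g.\ \cite[\S2]{BruhatTits84II}, or the subsequent \cite{BruhatTits87SU}): its facets are in bijection with periodic $\calO_D$-lattice chains in $D^n$ up to homothety, and the point stabilizer group scheme $\catG_y$ at any point whose facet corresponds to the lattice chain $\mathscr{L}$ is $\uGL_1\bigl(\End_{\calO_D}(\mathscr{L})\bigr)$. Combined with Theorem~\ref{TH:structure-of-hered-rings}, which identifies hereditary $R$-orders in $M_n(D)$ precisely with the algebras $\End_{\calO_D}(\mathscr{L})$ for such lattice chains $\mathscr{L}$, this yields the asserted bijection between point stabilizer group schemes of $\bfH$ and the $\uGL_1$'s of hereditary $R$-orders in $B$.

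The main obstacle is the final identification: one must check that the abstract smooth affine $R$-group $\catG_y$ supplied by Bruhat--Tits theory agrees, on the nose, with $\uGL_1(\End_{\calO_D}(\mathscr{L}))$. This amounts to verifying that $\uGL_1(E)$ is smooth over $R$, that its generic fiber is $\bfH$, and that $\uGL_1(E)(R^{\sh}) = \uGL_1(E \otimes_R R^{\sh})$ coincides with the fixator $\tilde G_y$ under the lattice-chain description of the building over $F^{\sh}$; these properties together with the uniqueness clause in the characterization of $\catG_y$ force the identification. All of this is essentially packaged inside the Bruhat--Tits construction for $\uGL_n(D)$.
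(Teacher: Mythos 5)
Your proposal follows essentially the same route as the paper: reduce to the case $B=\nMat{D}{n}$ central simple over $F$ (via factorization into simple components and Weil restriction from $\Cent(B)$), invoke the explicit description of $\calB(\uGL_n(D),F)$, identify the point fixators with $\units{E}$ for $E$ a hereditary order, and then match $\uGL_1(E)$ with the Bruhat--Tits group scheme $\catG_y$ by comparing $R^\sh$-points. The main cosmetic difference is that the paper works with the ``splittable norm'' model of the building from \cite{BruhatTits84GLD}, whereas you use the equivalent lattice-chain model and cite the structure theorem (Theorem~\ref{TH:structure-of-hered-rings}) to pass between lattice chains and hereditary orders; these are interchangeable.

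One place where the paper is more careful than your sketch: the identification $\uGL_1(E)(R^\sh)=\tilde G_y$ is not automatic from the description of fixators over $F$. The paper establishes it in two steps --- using \cite[Th.~4.7]{BruhatTits84GLD} to obtain $\units{(E\otimes S)}=\Fix_{\bfH(K)}(y)$ for every unramified Galois extension $K/F$ with ring of integers $S$, and then passing to the limit to get the $F^\sh$-version. Your ``all of this is essentially packaged inside the Bruhat--Tits construction'' glosses over exactly this unramified base-change step, which is the one nontrivial compatibility needed to invoke the uniqueness characterization of $\catG_y$. Also, a small imprecision: you speak of ``lattice chains up to homothety'', which parameterizes facets of the \emph{reduced} building; for the extended building one should drop the homothety quotient, though this does not affect the conclusion because (as the paper notes in the proof of Theorem~\ref{TH:parahoric}, citing \cite[\S3.6]{BruhatTits84GLD}) the fixer of a point of $\calB(\bfH,F^\sh)$ depends only on the facet. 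Neither issue invalidates your argument, but both deserve an explicit citation rather than a wave at the general theory.
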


    \begin{proof}
    	Factorizing $B$ as a product of simple artinian $F$-algebras and working in each factor separately
    	(using \cite[Th.~40.7]{MaximalOrders}),
    	we may assume that $B=\nMat{D}{n}$ where $D$ is a separable division $F$-algebra.
    	
    	By  \cite[Th.~2.11]{BruhatTits84GLD}, $\calB(\bfH,F)$ can be identified  with the collection of \emph{splittable norms}
		on $D^n$ (see \cite[Df.~1.4]{BruhatTits84GLD} for the definition).\footnote{
			The assumption that $\Cent(D)=F$ in \cite{BruhatTits84GLD}
			can be ignored by viewing $\uGL_1(\nMat{D}{n})$ as a group scheme
			over $K=\Cent(D)$ and using  fact that the building of
			$\bfH\to \Spec K$ is canonically isomorphic to the building of the Weil restriction
			$\calR_{K/F}\bfH\to \Spec F$ (\cite[p.~44]{Tits79}).}
		From sections 1.17, 1.23 and 1.24
		of \cite{BruhatTits84GLD}, it follows
		that the $\bfH(F)$-stabilizers of points in $\calB(\bfH,F)$,
		are precisely the sets $\units{E}$
		as $E$ ranges over the hereditary orders in $\nMat{D}{n}$.
		Furthermore, by \cite[Th.~4.7]{BruhatTits84GLD} (see also section 2.14 there),
        given an unramified Galois extension $K/F$, a hereditary order $E$ in $B$
        and $y\in\calB(\bfH,F)$ with $\units{E}=\Fix_{\bfH(F)}(y)$, we have $\units{(E\otimes S)}=\Fix_{\bfH(K)}(y)$,
		where $S$ is the integral closure of $R$ in $K$.
		Taking the limit over all unramified Galois  extensions, we see that $\units{(E\otimes R^{\sh})}=\Fix_{\bfH(K^\sh)}(y)$.
		Since $\uGL_1(E)$ is a smooth affine group $R$-scheme  (being an open subscheme of $\mathbb{A}^{\dim B}_R$),
		and since $\uGL_1(E)(R^{\sh})=\units{(E\otimes R^{\sh})}$, the point stabilizer group scheme
        associated to $y$ must be $\uGL_1(E)$.
    \end{proof}


	\begin{proof}[Proof of Theorem~\ref{TH:parahoric}]
    	Write $(B,\tau)=\prod_i(B_i,\tau_i)$ as above. It is enough to prove the theorem
    	for each of the factors separately. We may therefore assume that
    	$B$ is simple artinian, or $B=B'\times B'^\op$ with $B'$ simple aritinian
    	and $\tau$ is given by $(a,b^\op)\mapsto (b,a^\op)$.
    	
    	In the latter case, we have $\uU(B,\tau)^0=\uU(B,\tau)\cong \uGL_1(B')$ via $(x,y^\op)\mapsto x$ on sections.
		Since
		any $\tau$-stable hereditary order $E$ in $B$ is of the form
		$E=E'\times E'^\op$ with $E'$  a hereditary order in $B'$, Theorem~\ref{TH:building-GLn}
		implies that $\uU(E,\tau)^\diamond=\uU(E,\tau)\cong \uGL_1(E')$ is a point stabilizer group scheme of $\bfG$,
		and all point stabilizer group schemes  are obtained in this manner.
		
		Suppose henceforth that $B$ is simple  and write $\bfH=\uGL_1(B)$.
		Consider the automorphism $\tilde{\tau}:\bfH\to\bfH$ given by $x\mapsto (x^{-1})^{\tau}$ on sections.
		Then
		$\uU(B,\tau)$ is the group scheme of $\tilde{\tau}$-fixed points in $\bfH$.
		The automorphism $\tilde{\tau}$
		induces an automorphism on the building $\tilde{\tau}:\calB(\bfH,F^{\sh})\to\calB(\bfH,F^{\sh})$
		satisfying $\tilde{\tau}(gy)=\tilde{\tau}(g)\tilde{\tau}(y)$ for all $g\in\bfH(F^{\sh})$, $y\in\calB(\bfH,F^\sh)$.
		A theorem of Prasad and Yu  \cite[Th.~1.9]{PrasYu02}
		now asserts that the space of $\tilde{\tau}$-fixed points in $\calB(\bfH,F^{\sh})$ is  isomorphic to $\calB(\bfG,F^{\sh})$
		both as  $\bfG(F^{\sh})$-sets and as $\Gal(F^{\sh}/F)$-sets.
		
		Let $E$ be a $\tau$-stable hereditary
		$R$-order in $B$. By Theorem~\ref{TH:building-GLn}, there exists  $y\in\calB(\bfH,F)$
		whose point stabilizer group scheme relative to $\bfH$ is $\uGL_1(E)$,
		hence $\Fix_{\bfH(F^{\sh})}(y)=\units{(E\otimes R^{\sh})}$.
		As $\tilde{\tau}(\units{E})=\units{E}$,
		we also have $\Fix_{\bfH(F^{\sh})}(\tilde{\tau}(y))=\units{(E\otimes R^{\sh})}$.
		Since the fixer of a point determines the facet containing it, $y$ and $\tilde{\tau}(y)$
		are contained in the same facet $C$ of $\calB(\bfH,F^{\sh})$.
		Let $z$ be middle point   of the geodesic segment connecting $y$ and $\tilde{\tau}(y)$.
		Then $z\in C$, $\tilde{\tau}(z)=z$, and $z$ is invariant under $\Gal(F^{\sh}/F)$,
		hence $z\in\calB(\bfG,F)$.
		By section 3.6 of \cite{BruhatTits84GLD}, the fixer of a
		point of $\calB(\bfH,F^{\sh})$ depends only on
		the facet containing it, hence $\Fix_{\bfH(F^{\sh})}(z)=\Fix_{\bfH(F^{\sh})}(y)=\units{(E\otimes R^{\sh})}$.
		Since $\tilde{\tau}(z)=z$,
		the fixer of $z$ in $\bfG(F^{\sh})$ is $\units{(E\otimes R^{\sh})}\cap \uU(B,\tau)^0(F^\sh)=\uU(E,\tau)^{\diamond}(R^{\sh})$.
		Since $\uU(E,\tau)^\diamond\to \Spec R$ is affine and smooth, it must be the point stabilizer  group scheme
		$\catG_z$.
		
		Conversely, let $z\in\calB(\bfG,F)$. By Theorem~\ref{TH:building-GLn}, there is a hereditary $R$-order $E$
		with $\Fix_{\bfH(F)}(z)=\units{E}$. Since $\tilde{\tau}(z)=z$,
		we have $\tau(\units{E})=\units{E}$, which implies that $E$ is stable under $\tau$ (since
		$\units{E}$ generates $E$ as an additive group whenever $|R/\Jac(R)|>2$; the proof, using
		upper and lower triangular matrices
		in $E/\Jac(E)$,
		is omitted). Now,
		as in the previous paragraph, we get $\catG_z=\uU(E,\tau)^\diamond$.
    \end{proof}

    \emph{We now retain our original setting where $R$ is a semilocal PID.}

\medskip

	Let $\bfG$ be a reductive algebraic group over $F$
	and let $\catG\to \Spec R$ be a group scheme with $\catG_F=\bfG$. Let
	us say that $\catG$ is a \emph{point stabilizer} group scheme of $\bfG$ if
	$\catG$ is affine, smooth and for every $0\neq \frakp\in\Spec R$,
	the group scheme $\catG_{\hat{R}_\frakp}$ is a point stabilizer group scheme
	of $\bfG_{\hat{F}_\frakp}$ as above ($\hat{R}_\frakp$, $\hat{F}_\frakp$
    are defined as in Section~\ref{section:non-unimodular}).
	In this case, call $\catG^0$ a \emph{parahoric} group
	scheme of $\bfG$.
	Theorem~\ref{TH:parahoric} implies:

    \begin{cor}\label{CR:parahoric}
    	The point stabilizer   group schemes of $\uU(B,\tau)^0$
    	are precisely the group schemes  $\uU(E,\tau)^\diamond$
		where $E$ ranges over the $\tau$-stable hereditary $R$-orders in $B$.
    \end{cor}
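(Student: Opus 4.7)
The plan is to reduce both inclusions to the complete-DVR case handled by Theorem~\ref{TH:parahoric} via base change to each completion $\hat{R}_\frakp$, and then assemble the local data using the standard local-global theory of $R$-lattices in a semisimple $F$-algebra.

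For the direction $(\supseteq)$, fix a $\tau$-stable hereditary $R$-order $E\subseteq B$. The scheme $\uU(E,\tau)^\diamond$ is smooth and affine over $R$ with generic fiber $\uU(B,\tau)^0$ by its construction in Section~\ref{section:cohomological}. For each nonzero $\frakp\in\Spec R$, Theorem~\ref{TH:local-hereditary} gives that $E\otimes_R\hat{R}_\frakp$ is a $\tau$-stable hereditary $\hat{R}_\frakp$-order in $B\otimes_F\hat{F}_\frakp$, and since the formation of the $\diamond$-group scheme commutes with flat base change one has $\uU(E,\tau)^\diamond_{\hat{R}_\frakp}\cong \uU(E\otimes\hat{R}_\frakp,\tau)^\diamond$. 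Theorem~\ref{TH:parahoric} then identifies the latter as a point stabilizer group scheme of $\bfG_{\hat{F}_\frakp}:=\uU(B,\tau)^0\otimes_F\hat{F}_\frakp$, so $\uU(E,\tau)^\diamond$ meets the semilocal definition.

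For the direction $(\subseteq)$, let $\catG\to\Spec R$ be a point stabilizer group scheme of $\bfG:=\uU(B,\tau)^0$. For each nonzero $\frakp\in\Spec R$, Theorem~\ref{TH:parahoric} supplies a $\tau$-stable hereditary $\hat{R}_\frakp$-order $E_\frakp\subseteq B\otimes\hat{F}_\frakp$ with $\catG_{\hat{R}_\frakp}=\uU(E_\frakp,\tau)^\diamond$, canonically on generic fibers. Because $R$ is a semilocal PID and therefore has only finitely many nonzero primes, the standard correspondence between full $R$-lattices in $B$ and compatible families of full $\hat{R}_\frakp$-lattices assembles the $(E_\frakp)_\frakp$ into a unique $R$-order $E\subseteq B$ with $E\otimes_R\hat{R}_\frakp=E_\frakp$ for every $\frakp$. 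The order $E$ is $\tau$-stable by the uniqueness in the gluing, and hereditary by Theorem~\ref{TH:local-hereditary}.

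The remaining step, which I expect to be the main obstacle, is to upgrade the pointwise identifications $\catG_{\hat{R}_\frakp}=\uU(E,\tau)^\diamond_{\hat{R}_\frakp}$ to a global isomorphism $\catG\cong \uU(E,\tau)^\diamond$ over $R$. Both are smooth affine $R$-group schemes with identified generic fibers inside $\bfG$ and identified completions at each prime, so the coordinate rings of the two schemes are $R$-flat subalgebras of $F[\bfG]$ whose $\hat{R}_\frakp$-completions coincide inside $\hat{F}_\frakp[\bfG]$ for each $\frakp$. Since $\prod_\frakp\hat{R}_\frakp$ is faithfully flat over $R$ (a fact already exploited in Proposition~\ref{PR:etale-extsions-of-modules}) and the pointwise identifications are compatible on the common generic fiber, faithfully flat descent yields the desired global identification, completing the proof.
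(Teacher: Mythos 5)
Your proposal follows essentially the same route as the paper's proof: one direction by Theorem~\ref{TH:local-hereditary} together with the complete-DVR case (Theorem~\ref{TH:parahoric}), and the converse by assembling the local orders $E_\frakp$ into a global $E$ via the local-global correspondence for lattices (the paper cites {[Th.~5.3]}{MaximalOrders}) and then upgrading the fiberwise identifications to a global one using faithful flatness of $\prod_\frakp \hat{R}_\frakp$. The only detail you elide is how to conclude two $R$-subalgebras of $F[\bfG]$ are equal from equality of their completions — the paper passes through the intermediate subalgebra $R[\catG]+R[\catU]$ so that one genuinely has an inclusion becoming an isomorphism after faithfully flat base change — but this is exactly what ``faithfully flat descent'' unwinds to, so your argument is correct.
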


    \begin{proof}
    	Theorem~\ref{TH:local-hereditary} implies that
    	that $\uU(E,\tau)^\diamond$   is a point stabilizer group scheme
    	for any $\tau$-stable hereditary $R$-order $E$, so we need to show the converse.
    	
    	If $\catG$ is a point stabilizer group scheme, then for any $0\neq\frakp\in \Spec R$, there
    	is a $\tau$-stable $\hat{R}_\frakp$-order $E_\frakp$ in $B_{\hat{R}_\frakp}$ such that
    	$\catG_{\hat{R}_\frakp}\cong \uU(E_\frakp,\tau)^\diamond$ and the isomorphism
    	extends the isomorphism $\catG_{\hat{F}_\frakp}\cong\uU(B,\tau)^0_{\hat{F}_\frakp}$ induced
    	by $\catG_F=\uU(B,\tau)^0$.
    	Embedding $B$ diagonally in $\prod_\frakp B_{\hat{R}_\frakp}$, let $E=B\cap\prod_\frakp E_\frakp$.
    	Then $E$ is an $R$-order in $B$ with $E_{\hat{R}_\frakp}=E_\frakp$ (\cite[Th.~5.3]{MaximalOrders}), hence $E$ is hereditary
    	by  Theorem~\ref{TH:local-hereditary}. We claim that the identification $\catG_F= \uU(B,\tau)^0$
    	extends to an isomorphism $\catG\cong \uU(E,\tau)$.
    	
    	Write $\catG=\Spec S$, and for any $R$-algebra $R'$, let $R'[\catG]=S\otimes_RR'$.
    	Similar notation will be applied to all affine schemes.
    	Since $\catG$ and $\catU:=\uU(E,\tau)^\diamond$ are flat over $R$, we may view $R[\catG]$ and $R[\catU]$
    	as subrings of $F[\uU(B,\tau)^0]=F[\catG]=F[\catU]$.
        Likewise, for every $0\neq \frakp\in\Spec R$, 
        we may regard $\hat{R}_\frakp[\catG]$ and $\hat{R}_\frakp[\catU]$ as subrings of $\hat{F}_\frakp[\uU(B,\tau)^0]$.
        In fact, by the previous paragraph, $\hat{R}_\frakp[\catG]=\hat{R}_\frakp[\catU]$.
        %
        Write $M=R[\catG]+R[\catU]\subseteq F[\catG]$. Then the inclusion
        $R[\catG]\to M$ becomes an isomorphism after extending scalars to $\hat{R}_\frakp$ for all $0\neq \frakp\in\Spec R$.
        Since $\prod_{\frakp\neq 0} \hat{R}_\frakp$ is a faithfully flat $R$-module,
        $R[\catG]=M$, and likewise $R[\catU]=M$. It follows that
    	$R[\catG]=R[\catU]$, namely, the isomorphism $\catG_F\cong \catU_F=\uU(B,\tau)^0$
    	extends to an isomorphism $\catG\cong \catU$.
    \end{proof}

    Corollary~\ref{CR:parahoric} and Theorem~\ref{TH:main-coh} suggest the following question, which
    extends the Grothendieck--Serre conjecture (see the introduction)
    for regular local rings of dimension $1$  with  perfect residue field.

    \begin{que}\label{QU:GS}
    	Let $\catG\to \Spec R$ be a group scheme such that $\bfG:=\catG_F$ is reductive.
    	Is the base change map
    	\[
    	\rmH^1_{\et}(R,\catG)\to \rmH^1_{\et}(F,\bfG)
    	\]
    	injective when  $\catG$ is (a) a point stabilizer group scheme of $\bfG$? (b)   a parahoric
    	group scheme of $\bfG$?
    \end{que}

    The reason for introducing the question for parahoric group schemes is because the Grothendieck--Serre
    conjecture was posed only for connected groups, an assumption which is necessary in general.

    With some additional work, one can use Theorem~\ref{TH:main-coh} to show that the answer
    to both parts of Question~\ref{QU:GS} is ``yes'' when $\bfG=\uU(B,\tau)^0$ as above. This will be published elsewhere.

    We finally note that Bruhat and Tits already established a special case of part (a) in \cite[Lm.~3.9]{BruhatTits87III}:
    Assuming $R$ is a complete DVR, they show that for \emph{certain} points $y\in\calB(\bfG,F)$, the base change map
    $\rmH^1_{\et}(R,\catG_y)_{\mathrm{an}}\to \rmH^1(R,\bfG)$ is injective
    (the group scheme $\catG_y$ is denoted $\mathbf{N}_H(P)$ in \cite{BruhatTits87III} where $P=\catG_y^0(R^{\sh})$
    and $H=\bfG(F^{\sh})$, cf.\
     \cite[\S1.7, \S3.5]{BruhatTits87III}). Here,
    the subscript ``$\mathrm{an}$'' denotes the subset of cohomology classes $\alpha$ for which the closed fiber
    of the $\alpha$-twist ${}^\alpha \catG_y\to \Spec R$ has no proper parabolic subgroups; see \cite[\S3.6]{BruhatTits87III}.
    The points $y$ for which this result applies are those points with the property that $G_y\supseteq G_z$ for any $z$ in the same
    facet as $y$. For example, when $\bfG$ is semisimple, this holds for the center of mass of any facet.

\section{Hermitian Forms Equipped with  a Group Action}
\label{section:Gamma-forms}

    In this  section, we apply Theorem~\ref{TH:non-unimodular-forms} to prove a
    result about
    hermitian forms equipped with an  action of a finite group. Throughout, let $R$ denote a semilocal PID
    with $2\in\units{R}$, let $F$ be the fraction field of $R$, let $u\in\{\pm 1\}$, and let $\Gamma$ be a finite group.
    We let $R\Gamma$ denote the group ring of $\Gamma$ over $R$.

\medskip

    Recall that a \emph{$u$-hermitian $\Gamma$-form}, or just \emph{$\Gamma$-form}, consists of a pair $(P,f)$
    such that $P$ is a right $R\Gamma$-module, $f:P\times P\to R$
    is a $u$-hermitian form over $(R,\id_R)$ (so  $P\in\rproj{R}$),
    and
    $f(xg,yg)=f(x,y)$ for all $x,y\in P$ and $g\in \Gamma$.
    An \emph{isomorphism of $\Gamma$-forms} from $(P,f)$ to another $\Gamma$-form
    $(P',f')$ is an isomorphism of $R\Gamma$-modules
    $\phi:P\to P'$ such that $f'(\phi x,\phi y)= f(x,y)$ for all $x,y\in P$.
    Scalar extension of $\Gamma$-forms is defined in the obvious way.
    For an extensive discussion about $\Gamma$-forms, see \cite{Riehm11}.

\medskip

    Note that if $P$ is a right $R\Gamma$-module,
    then $P^*:=\Hom_R(P,R)$ admits a \emph{right} $R\Gamma$-module
    structure given by linearly extending $(\phi g)x=\phi(x g^{-1})$
    ($\phi\in P^*$, $g\in \Gamma$, $x\in P$).
    It is easy to check that
    a $u$-hermitian form $f:P\times P\to R$ is a $\Gamma$-form if and only
    if $f_\ell:P\to P^*$ is a homomorphism of $R\Gamma$-modules.
    In this case, the coradical $\corad(f)=\coker(f_\ell)$
    is a right $R\Gamma$-module.

    We say that a $\Gamma$-form is nearly unimodular if it is nearly unimodular
    as a $u$-hermitian form over $R$.

    \begin{example}
        Let $K/F$ be a finite field extension and let $\Gamma\to \Gal(K/F)$
        be a group homomorphism. Then $\Gamma$ acts on $K$.
        Let $S$ be the integral closure of $R$ in $K$. Then the trace form
        $(x,y)\mapsto \mathrm{tr}_{K/F}(xy):S\times S\to R$
        is a $\Gamma$-form.
    \end{example}

    \begin{thm}\label{TH:Gamma-forms}
        Let $(P,f)$, $(P',f')$ be two nearly unimodular $\Gamma$-forms over $R$
        whose coradicals are isomorphic as $R\Gamma$-modules.
        Assume that $|\Gamma|\in\units{R}$.
        Then $(P_F,f_F)\cong (P'_F,f'_F)$ as $\Gamma$-forms implies
        $(P,f)\cong (P',f')$ as $\Gamma$-forms. Furthermore, any unimodular $\Gamma$-form over $F$
        is obtained by base change from a nearly unimodular $\Gamma$-form over $R$.
    \end{thm}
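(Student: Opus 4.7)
The plan is to reinterpret $\Gamma$-forms as ordinary $u$-hermitian forms over the group ring $R\Gamma$ equipped with the canonical involution $\sigma:\sum_g a_g g\mapsto \sum_g a_g g^{-1}$, and then invoke Theorem~\ref{TH:non-unimodular-forms}. Since $|\Gamma|\in\units{R}$, Maschke's theorem implies that $R\Gamma$ is a separable $R$-algebra, so by Theorem~\ref{TH:separable-maximal-hereditary} it is a maximal (hence hereditary) $R$-order in $F\Gamma$; in particular every $P\in\rproj{R\Gamma}$ lies in $\rproj{R}$, and the hypotheses of Theorem~\ref{TH:non-unimodular-forms} are met by $(R\Gamma,\sigma)$ with $u\in\{\pm 1\}\subseteq\Cent(R\Gamma)$.

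Next I would set up a functorial correspondence between $u$-hermitian $\Gamma$-forms over $R$ and $u$-hermitian forms over $(R\Gamma,\sigma)$ with base module in $\rproj{R\Gamma}$, given explicitly by
\[
F(x,y)=\sum_{g\in\Gamma}f(x,yg^{-1})\,g,\qquad f(x,y)=\epsilon_1(F(x,y)),
\]
where $\epsilon_1:R\Gamma\to R$ extracts the coefficient of the identity. A direct calculation shows that these assignments are mutually inverse, that they identify isomorphisms of $\Gamma$-forms with isometries of hermitian forms, and that they commute with scalar extension. The key compatibility, which requires some care, is the matching of coradicals \emph{as right $R\Gamma$-modules}: this rests on verifying that the canonical map $\Hom_{R\Gamma}(P,R\Gamma)\to \Hom_R(P,R)$, $\phi\mapsto \epsilon_1\circ\phi$, is $R\Gamma$-linear for the two distinct right actions involved (the one on the domain twisted by $\sigma$, the one on the codomain by the antipode $g\mapsto g^{-1}$). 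Once this is in place, $F_\ell$ is identified with $f_\ell$, so $F$ is (nearly) unimodular iff $f$ is, and $\corad(F)\cong\corad(f)$ as $R\Gamma$-modules.

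With the correspondence in hand, part (ii) follows at once: a unimodular $\Gamma$-form over $F$ corresponds to a unimodular hermitian form over $(F\Gamma,\sigma_F)$, which by Theorem~\ref{TH:non-unimodular-forms}(ii) descends to a nearly unimodular hermitian form over $(R\Gamma,\sigma)$, and translating back yields the desired nearly unimodular $\Gamma$-form over $R$.

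For part (i), the isomorphism $(P_F,f_F)\cong (P'_F,f'_F)$ of $\Gamma$-forms in particular provides an $F\Gamma$-module isomorphism $P_F\cong P'_F$. Since $R\Gamma$ is maximal, Lemma~\ref{LM:rational-iso-of-modules} yields $P\cong P'$ as $R\Gamma$-modules, so I may assume $P=P'$. Under the correspondence I then obtain two nearly unimodular hermitian forms on the same $P\in\rproj{R\Gamma}$ whose coradicals are isomorphic and which become isomorphic after base change to $F$; Theorem~\ref{TH:non-unimodular-forms}(i), applied to the hereditary $R$-order $R\Gamma$, produces an isometry between them, which translates back to the desired isomorphism of $\Gamma$-forms. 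The main obstacle is really just the bookkeeping in setting up the correspondence and checking that the duality isomorphism above intertwines the two different right $R\Gamma$-structures, so that coradicals match as $R\Gamma$-modules and not merely as $R$-modules; everything else is formal once this is verified.
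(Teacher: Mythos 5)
Your approach coincides with the paper's: reinterpret $\Gamma$-forms as hermitian forms over $(R\Gamma,\sigma)$, note $R\Gamma$ is separable (hence maximal, hence hereditary) when $|\Gamma|\in\units{R}$, transport via the trace map $\epsilon_1=\calT$, match coradicals as $R\Gamma$-modules, and invoke Theorem~\ref{TH:non-unimodular-forms} (with Lemma~\ref{LM:rational-iso-of-modules} to reduce to a common base module, which is exactly what Theorem~\ref{TH:maximal-orders} bundles together). Your formula $F(x,y)=\sum_g f(x,yg^{-1})g$ is the same as the paper's $\hat f(x,y)=\sum_g f(xg,y)g$, since $f$ is $\Gamma$-invariant.

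There is one point you pass over too quickly, and it is the one further ingredient the paper isolates. You write ``so $F$ is (nearly) unimodular iff $f$ is,'' but the two notions of nearly unimodular live over different rings: a $\Gamma$-form is nearly unimodular when $\corad(f)$ is semisimple \emph{as an $R$-module}, whereas to apply Theorem~\ref{TH:non-unimodular-forms} to $(P,F)$ over $(R\Gamma,\sigma)$ you need $\corad(F)$ semisimple \emph{as an $R\Gamma$-module}. Having $\corad(F)\cong\corad(f)$ as $R\Gamma$-modules does not by itself close this gap. What is needed is the equivalence: a finitely generated $R\Gamma$-module is semisimple over $R\Gamma$ iff it is semisimple over $R$. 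The easy direction ($R\Gamma$-semisimple $\Rightarrow$ $R$-semisimple) is what you implicitly use in part (ii); the direction you actually need for part (i) ($R$-semisimple $\Rightarrow$ $R\Gamma$-semisimple) genuinely uses $|\Gamma|\in\units{R}$: separability of $R\Gamma$ over $R$ forces $\Jac(R\Gamma)=R\Gamma\cdot\Jac(R)$ (one inclusion is Proposition~\ref{PR:finite-alg-over-semilocal}, the other holds because $(R\Gamma)\otimes_R (R/\Jac(R))$ is a separable algebra over a product of fields, hence semisimple), so any $R$-semisimple module is already a module over the semisimple ring $R\Gamma/\Jac(R\Gamma)$. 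Adding this one lemma completes your argument and brings it into exact agreement with the paper's proof.
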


    We set  notation for the proof:
    Let $A=R\Gamma$. The ring $A$ has an $R$-involution
    $\sigma:A\to A$ given by $(\sum_{g\in\Gamma}a_gg)^\sigma=\sum_{g\in\Gamma}a_gg^{-1}$.
    Let $P$ be a right $A$-module.
    To avoid ambiguity, we let $P^\circ$ denote $\Hom_A(P,A)$ (viewed
    as a right $A$-module as in~\ref{subsection:hermitian-forms}), while
    $P^*$  denotes $\Hom_R(P,R)$ (also viewed as a right $A$-module).
    Finally,
    let $\calT:A\to R$ be given by
    \[
    \calT\big(\sum_{g\in \Gamma}a_gg\big)=a_{1_\Gamma}\ .
    \]
    Theorem~\ref{TH:Gamma-forms} now follows from the following proposition,
    which reduces everything to the setting of Theorems~\ref{TH:non-unimodular-forms} and~\ref{TH:maximal-orders}.

    \begin{prp}
    	Assume $|\Gamma|\in\units{R}$. Then:
    	\begin{enumerate}
    		\item[{\rm(i)}] $A$ is separable  over $R$ (and hence a maximal $R$-order by Theorem~\ref{TH:separable-maximal-hereditary}).
    		\item[{\rm(ii)}] There is an isomorphism between $\herm[u]{A,\sigma}$, 
    		the category of all $u$-hermitian spaces over $(A,\sigma)$ (cf.\ \ref{subsection:hermitian-forms}),
    		and the category of $\Gamma$-forms given by $(P,f)\mapsto (P,\calT\circ f)$;
    		isometries are mapped to themselves.
    		\item[{\rm(iii)}] The isomorphism in (ii) is compatible with base change
    		and it preserves coradicals.
    		\item[{\rm(iv)}] A right $A$-module $M$ is semisimple if and only if
    		it is semisimple as an $R$-module.
    	\end{enumerate}
    \end{prp}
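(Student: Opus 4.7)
My plan is to dispose of (i) by writing down the standard Maschke separability idempotent, then to prove (ii) and (iii) by producing an explicit inverse to the functor $(P,f)\mapsto(P,\calT\circ f)$, and finally to handle (iv) by averaging in one direction and by reducing to the semisimple ring $(R/\Jac(R))\Gamma$ in the other.

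For (i) I would exhibit $e=\frac{1}{|\Gamma|}\sum_{g\in\Gamma}g\otimes g^{-1}\in A\otimes_R A^{\op}$ and check that $(a\otimes 1)e=(1\otimes a)e$ for all $a\in A$ and that $\mu(e)=1$, where $\mu$ is the multiplication. This makes $A$ a separable $R$-algebra, and maximality then follows from Theorem~\ref{TH:separable-maximal-hereditary}.

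For (ii), given a $\Gamma$-form $(P,h)$, I would define $\tilde f\colon P\times P\to A$ by
\[
\tilde f(x,y)=\sum_{g\in\Gamma}h(x,yg)\,g^{-1}.
\]
A direct reindexing verifies that $\tilde f$ is sesquilinear with respect to $\sigma$; $u$-hermiticity follows from $h(y,xg)=u\cdot h(x,yg^{-1})$ together with $u^2=1$. The assignment $h\mapsto\tilde f$ is inverse to $f\mapsto\calT\circ f$: both composites recover the original object by inspecting the coefficient at $1_\Gamma$. Since $|\Gamma|\in\units{R}$ and $A$ is $R$-free, a finitely generated $A$-module is in $\rproj{A}$ iff it is in $\rproj{R}$ (the usual averaging trick upgrades an $R$-splitting of $M\oplus N\twoheadrightarrow M$ to an $A$-splitting), so the two categories have the same objects. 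An isomorphism $\phi\colon P\to P'$ is an isomorphism of $A$-modules iff it is $R$-linear and $\Gamma$-equivariant, and it respects $f$ iff it respects $\calT\circ f$, so the hom-sets and isometries match.

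For (iii), base-change compatibility is transparent because $\calT_S$ is obtained from $\calT$ by tensoring with $S$, and the defining formula is preserved. The compatibility with coradicals is the only delicate point: I would build a natural $A$-linear isomorphism $\Phi_P\colon P^{\circ}\to P^{*}$ by $\Phi_P(\phi)=\calT\circ\phi$. $A$-linearity uses the trace identity $\calT(ab)=\calT(ba)$; for $P=A$ the map becomes $c\mapsto[x\mapsto\calT(cx)]$, which is an isomorphism by non-degeneracy of the group-algebra trace form, and the general case follows by additivity on $\rproj{A}$. A direct check gives $h_\ell=\Phi_P\circ f_\ell$ when $h=\calT\circ f$, so $\corad(h)=\coker(h_\ell)\cong\coker(f_\ell)=\corad(f)$ as right $A$-modules.

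For (iv), the direction $(\Leftarrow)$ is the averaging trick: given an $A$-submodule $N\subseteq M$ and an $R$-linear splitting $p\colon M\to N$, the map $\tilde p(x)=\frac{1}{|\Gamma|}\sum_{g\in\Gamma}p(xg)g^{-1}$ is $A$-linear and restricts to the identity on $N$, so $N$ is an $A$-direct summand. For $(\Rightarrow)$, if $M$ is semisimple over $A$ then $M\cdot\Jac(A)=0$; combining this with Proposition~\ref{PR:finite-alg-over-semilocal}, which gives $A\cdot\Jac(R)\subseteq\Jac(A)$, one concludes that $M$ is a module over $\bar A:=A/A\Jac(R)\cong(R/\Jac(R))\Gamma$. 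Since $R/\Jac(R)$ is a finite product of fields in each of which $|\Gamma|$ is invertible, Maschke's theorem makes $\bar A$ a semisimple ring, so every $\bar A$-module is automatically semisimple as an $R/\Jac(R)$-module, and hence as an $R$-module. The main obstacle is bookkeeping rather than mathematics: the two right $A$-module structures on $P^{\circ}$ and $P^{*}$ are defined by different twists, and checking that $\Phi_P$ is $A$-linear is the step where the trace property $\calT(ab)=\calT(ba)$ must be invoked at exactly the right place.
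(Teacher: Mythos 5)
Your proof is correct and follows essentially the same path as the paper's: the separability idempotent $e=\tfrac{1}{|\Gamma|}\sum_g g\otimes g^{-1}$ is the explicit version of the citation the paper gives for (i); your $\tilde f(x,y)=\sum_g h(x,yg)g^{-1}$ is the paper's $\hat h(x,y)=\sum_g h(xg,y)g$ after the substitution $g\mapsto g^{-1}$ and using $\Gamma$-invariance; and the natural transformation $\Phi_P(\phi)=\calT\circ\phi$ with $h_\ell=\Phi_P\circ f_\ell$ is exactly how the paper identifies the coradicals. The only noticeable variation is in (iv): you treat the two implications separately (Maschke-style averaging for $R$-semisimple $\Rightarrow$ $A$-semisimple, passage to $(R/\Jac(R))\Gamma$ for the converse), whereas the paper first observes that the separability of $A_k$ over $k=R/\Jac(R)$ forces $\Jac(A)=A\Jac(R)$, after which both implications follow symmetrically by viewing $M$ as a module over the semisimple ring $A/\Jac(A)$. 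Your version is equally valid but a bit less uniform; the paper's identity $\Jac(A)=A\Jac(R)$ is the crisper observation. One more small difference: the paper proves $\Phi$ is a natural isomorphism on all of $\rMod{A}$ by exhibiting the inverse $\Psi_P\psi=\sum_g\psi(\cdot\,g)g^{-1}$ directly (notably with no $1/|\Gamma|$ factor, so this works without the invertibility hypothesis), whereas you argue via nondegeneracy of the trace pairing on $P=A$ and additivity over $\rproj{A}$; for the stated application this suffices.
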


    Notice that part (iv)
    implies that a $\Gamma$-form $(P,f)$ is nearly unimodular if and only if its
    coradical is semisimple as an $R\Gamma$-module.


    \begin{proof}
    	(i) See for instance \cite[p.~41]{DeMeyIngr71SeparableAlgebras}.
    	
    	(ii) Observe first that any $A$-module
    	which is f.g.\ projective over $R$ is projective as an $A$-module by Proposition~\ref{PR:lattices}
    	(see \cite[Pr.~2.14]{Sa99} for a more direct proof). Using this, we construct an
    	inverse to $(P,f)\mapsto (P,\calT\circ f)$ as follows:
        For every $\Gamma$-form $(P,h)$, define $\hat{h}:P\times P\to A$
        by $\hat{h}(x,y)=\sum_{g\in\Gamma}h(xg,y)g$.
        It is routine to check that $(P,h)\mapsto (P,\hat{h})$  defines an inverse of $(P,f)\mapsto (P,\calT\circ f)$.

        (iii) The compatibility with scalar extension is  straightforward.

        Observe that the functors
        $*$ and $\circ$ from $\rMod{A}$ to $\rMod{A}$ are naturally
        isomorphic. Indeed, for all $P\in\rMod{A}$,
        define $\Phi_P:P^\circ\to P^*$ by $\Phi_P \phi=\calT\circ \phi$
        and $\Psi_P:P^*\to P^\circ$ by
        $(\Psi_P\psi)x=\sum_{g\in\Gamma}\psi(x g)g^{-1}$.
        It is easy to check that $\Phi=\{\Phi_P\}_{P\in\rMod{A}}:\circ \to *$
        and  $\Psi=\{\Psi_P\}_{P\in\rMod{A}}:*\to \circ$ are well-defined natural transformations
        which are inverse to each other, hence our claim.
        Now, if $(P,f)$ is a $u$-hermitian form over $(A,\sigma)$ and $h=\calT\circ f$,
        then it is easy to check that $\Phi_P\circ f_\ell = h_\ell$. Thus, since
        $P^*\cong P^\circ$ and $\Phi_P$ is an isomorphism, $\corad(f)=\coker(f_\ell)\cong\coker(h_\ell)=\corad(h)$,
        so the isomorphism in (ii) preserves coradicals.

        (iv) Write $k=R/\Jac(R)$. Then $A_k\cong A/A\Jac(R)$ is separable over $k$,
        which is a finite product of fields, and hence $A_k$ is semisimple (see~\ref{subsection:orders}).
        On the other hand $A\Jac(R)\subseteq\Jac(A)$ by 
        Proposition~\ref{PR:finite-alg-over-semilocal}, so $\Jac(A)=A\Jac(R)$.
        It follows that if $M$ is semisimple  as an $R$-module or as an $A$-module, then we may view it as
        a module over  $A_k=A/\Jac(A)$, and in particular over $k=R/\Jac(R)$. Since both $A/\Jac(A)$ and $R/\Jac(R)$
        are semisimple, $M$ must be semisimple both as an $A$-module and as an $R$-module.
%
    \end{proof}

    \begin{remark}
        The equivalence of the functors $*$ and $\circ$ in part (ii)
        holds even when $|\Gamma|\notin\units{R}$. More generally, it
        holds when $A$ is a  \emph{symmetric $R$-algebra};
        see {\cite[\S16F, Th.~16.71]{La99}} for further details.
        The equivalence between the categories of hermitian forms and $\Gamma$-forms also holds without assuming $|\Gamma|\in \units{R}$,
        provided one allows hermitian forms to have arbitrary base modules.
    \end{remark}

    \begin{remark}
        We do not know if the assumption $|\Gamma|\in\units{R}$
        in Theorem~\ref{TH:Gamma-forms} is necessary.
        However,
        by \cite{Dicks79}, $R\Gamma$ is not hereditary
        when $|\Gamma|\notin \units{R}$, so one cannot
        treat this case
        using Theorem~\ref{TH:non-unimodular-forms} and its consequences.
    \end{remark}

\bibliographystyle{plain}
\bibliography{MyBib_16_05}

\end{document}